\documentclass[10pt]{article}
\usepackage{epsfig}
\usepackage{amssymb,amsmath,amsthm,url}
\usepackage{multirow}
\usepackage{booktabs}
\usepackage{natbib}
\usepackage{hyperref}
\usepackage{setspace}
\usepackage{dsfont}

\usepackage{amsfonts,times,eucal}
\usepackage{graphicx,ifthen}
\usepackage{wrapfig}
\usepackage{latexsym}
\usepackage{color}
\usepackage{mathrsfs}
\usepackage{bm}
\usepackage{bbm}
\usepackage{srctex}
\usepackage{enumitem}
\makeatletter
\newcommand{\mylabel}[2]{#2\def\@currentlabel{#2}\label{#1}}
\makeatother
\pagetotal=29.7cm \textwidth=16.8cm \textheight=23.2cm
\topmargin=-1.8cm \headheight=0.3cm \headsep=1.6cm
\oddsidemargin=0cm \evensidemargin=0cm \arraycolsep=2pt
\columnsep=0.60cm
%%%%%%%%%%%%%%%%%%%%%%%%
%% FOR COMMENTS

%%%%%%%%%%%%%%%%%%%%%%%%

\linespread{1.2}

%Mathematical Definitions and Environments
%----------------------------------------------------------
\usepackage{mathtools}

%----------------------------------------------------------
%----------------------------------------------------------------------
%set environments and ams styles
\theoremstyle{plain}
\newtheorem{theorem}{Theorem}[section]
\newtheorem{corollary}[theorem]{Corollary}
\newtheorem{lemma}[theorem]{Lemma}

\newtheorem{condition}[theorem]{Condition}

%-----------------------------------------------------------
\theoremstyle{definition}

\newtheorem{example}[theorem]{Example}

%-----------------------------------------------------------
\theoremstyle{remark}

%-----------------------------------------------------------
\numberwithin{equation}{section}

%----------------------------------------------------------------------
%----------------------------------------------------------------------
%BB mode and cal mode
%BB Mode:

\newcommand{\N}{\mathbb{N}}
\newcommand{\R}{\mathbb{R}}

\newcommand{\Z}{\mathbb{Z}}

\newcommand{\p}{\mathbb{P}}

\newcommand{\cC}{\mathcal{C}}
\newcommand{\cD}{\mathcal{D}}
\newcommand{\cE}{\mathcal{E}}
\newcommand{\cG}{\mathcal{G}}
\newcommand{\cF}{\mathcal{F}}
\newcommand{\cH}{\mathcal{H}}
\newcommand{\cI}{\mathcal{I}}
\newcommand{\cL}{\mathcal{L}}
\newcommand{\cS}{\mathcal{S}}

\newcommand{\cO}{\mathcal{O}}

%----------------------------------------------------------------------
%----------------------------------------------------------------------
%frak mode

\newcommand{\fG}{\mathfrak{G}}

%----------------------------------------------------------------------
%----------------------------------------------------------------------
%new abbreviations
\newcommand{\E}[1]{\mathbb{E}\left [  #1 \right ]}
\newcommand{\Ec}[1]{\mathbb{E}^*\left [  #1 \right ]}

\newcommand{\veps}{\epsilon}
\let\veps\epsilon
\renewcommand{\epsilon}{\varepsilon}

\newcommand{\intd}[1]{\mathrm{d}#1}

\newcommand{\norm}[1]{\left\lVert #1 \right\rVert}

\newcommand{\scalar}[2]{\left\langle #1,#2 \right\rangle}

\newcommand{\1}[1]{\,\mathds{1}\! \left\{ #1 \right\} }

\newcommand{\beps}{\epsilon^{*}}
\newcommand{\bY}{Y^{*}}
\newcommand{\bootr}{\hat{r}^{*}_{n,b,h} }
\newcommand{\oas}{o_{a.s.}}
\newcommand{\Oas}{\cO_{a.s.}}

%----------------------------------------------------------
%\setlength{\parindent}{0pt}
\allowdisplaybreaks

\begin{document}

\title{
The bootstrap in kernel regression for stationary ergodic data\\ when both response and predictor are functions\footnote{This research was supported by the German Research Foundation (DFG), Grant Number KR-4977/1.}
}

\author{Johannes T. N. Krebs\footnote{Department of Statistics, University of California, Davis, CA, 95616, USA, email: \tt{jtkrebs@ucdavis.edu} }\; \footnote{Corresponding author}
}

\date{\today}
\maketitle

\begin{abstract}
\setlength{\baselineskip}{1.8em}
We consider the double functional nonparametric regression model $Y=r(X)+\epsilon$, where the response variable $Y$ is Hilbert space-valued and the covariate $X$ takes values in a pseudometric space. The data satisfy an ergodicity criterion which dates back to \cite{laib2010nonparametric} and are arranged in a triangular array. So our model also applies to samples obtained from spatial processes, e.g., stationary random fields indexed by the regular lattice $\Z^N$ for some $N\in\N_+$.

We consider a kernel estimator of the Nadaraya--Watson type for the regression operator $r$ and study its limiting law which is a Gaussian operator on the Hilbert space. Moreover, we investigate both a naive and a wild bootstrap procedure in the double functional setting and demonstrate their asymptotic validity. This is quite useful as building confidence sets based on an asymptotic Gaussian distribution is often difficult. \medskip\\
\noindent {\bf Keywords:}  Confidence sets; Functional spatial processes; Functional time series; Functional kernel regression; Hilbert spaces; Naive bootstrap; Nonparametric regression; Resampling; Stationary ergodic data; Wild bootstrap

\noindent {\bf MSC 2010:} Primary:  62F40, 62M30, 62M10; Secondary: 62G09, 62M20
\end{abstract}

	\vspace{-3em}
	\section*{}
	The seminal work of \cite{bosq_linear_2000} has initiated a lot of research on the theory and applications of functional data analysis. In this manuscript, we study aspects of the kernel regression model introduced by \cite{ferraty2000dimension}. More precisely, we consider a double functional regression problem and a corresponding version of the double functional wild bootstrap for a triangular array of dependent pairs $( (X_{n,i},Y_{n,i}) : 1\le i\le n, n\in\N)$ with identical distributions in the framework of stationary ergodic data of \cite{laib2010nonparametric}. The response variables $Y_{n,i}$ are Hilbert space-valued, the predictor variables $X_{n,i}$ take values in a functional space $\cE$ equipped with a pseudometric $d$.

	There is an extensive literature on asymptotic properties of nonparametric methods for functional data, in particular, the functional regression problem has been studied in several ways. A general introduction to functional data and their applications also offer the monographs of \cite{ramsay_applied_2002, ramsay_functional_2005}.

	Kernel methods for functional data are studied in the monograph of \cite{ferraty2006nonparametric}. \cite{ferraty2004nonparametric} consider the kernel regression model for i.i.d.\ data and a real-valued response variable. \cite{masry2005nonparametric} and \cite{delsol_advances_2009} extend the study to dependent data. \cite{ferraty_regression_2012} study the same model for a functional response and independent data. \cite{laib2011rates} obtain uniform rates for the kernel estimator.

	A functional version of the wild bootstrap for a real-valued response variable in the kernel regression model was proposed by \cite{ferraty_nonparametric_2007}; its asymptotic properties are studied in \cite{ferraty_validity_2010}. \cite{rana_bootstrap_2016} and \cite{politis_kernel_2016} give generalizations to strong mixing processes. \cite{gonzalez2011bootstrap} study the naive and the wild bootstrap in a linear regression model for FPCA-type estimates.

	In this paper, we denote the orthonormal basis of the separable Hilbert space $\cH$ by $\{e_k:k\in\N\}$ and its inner product by $\scalar{\cdot}{\cdot}$. Let $v\in\cH$ and define for a generic observation $(X,Y)\in \cE\times \cH$ the conditional mean functions by
	\begin{align*}
			r(x) &\coloneqq \E{ Y | X=x} \text{ and } r_v(x) \coloneqq  \E{	\scalar{Y}{v} | X=x }.
	\end{align*}
	Note that $r$ takes values in $\cH$ while $r_v$ is real-valued. We aim at estimating the regression operator $r$ with a Nadaraya--Watson-type estimator $\hat{r}_{n,h}$. 

	As mentioned above this problem has already been studied in the case where the data is $\alpha$-mixing. It is known that $\alpha$-mixing and stationarity, imply ergodicity, see, e.g., \cite{hannan2009multiple} and \cite{tempelman2010ergodic}. However, $\alpha$-mixing is sometimes too restrictive as it requires certain smoothness conditions. A well-known counterexample is the AR(1)-process 
	$	X_t =   \frac{1}{2} X_{t-1} + \epsilon_t$, $t\in\Z$
	with Bernoulli innovations $\epsilon_t$, see \cite{andrews1984non}. This process fails to be $\alpha$-mixing.	Consequently, alternative dependence concepts are also relevant when studying functional data such as functional time series or functional spatial processes. 
	
	In this paper, we continue with the concept of functional stationary ergodic data introduced in \cite{laib2010nonparametric}. We study the asymptotic normality of the kernel estimator in the double functional setting and prove the consistency of the wild and the naive bootstrap approximation of this kernel estimate in the Hilbert space. Therefore, we write $F^{\cF_{n,i-1} }_x(h) = \p( d(x,X_{n,i})\le h | \cF_{n,i-1} )$ for the conditional distribution function of $d(x,X_{n,i})$ given the past $\cF_{n,i-1}$ and $F_x(h) = \p( d(x,X_{n,i})\le h)$ for the unconditional distribution function. The unique feature of the underlying framework is the assumption on the ergodicity of the averages
	$
			n^{-1} \sum_{i=1}^n F^{\cF_{n,i-1} }_x(h) \approx F_x(h) 
	$
	in a sense which will be clarified below. Based on this assumption and on a multiplicative structure of the (conditional) small ball probabilities for $h$ tending to zero, we can deduce convergence results and limiting laws of the bootstrap in the double functional setting.

	The contribution of this paper is to provide advances when both the response and the predictor variable are functional. On the one hand, we generalize the results from \cite{ferraty_regression_2012} to the case of dependent data. The latter manuscript characterizes the limiting distribution of the kernel estimator in a double functional setting for pairs of independent data. On the other hand, we study the naive and the wild bootstrap in a double functional setting. Here we generalize the results of \cite{ferraty_validity_2010} as we consider a functional response variable. We provide limit theorems and characterize the consistency of the bootstrapped regression operator.

	The remainder of this manuscript is organized as follows. Section~\ref{NotationsAndHypotheses} contains the notations and hypotheses. The main results are contained in Section~\ref{MainResults} and concern the explicit form of the bias, limiting laws of the estimator and the consistency of the bootstrap approximations. The proofs are presented in Section~\ref{TechnicalResults} and mainly rely on exponential inequalities and limit theorems for Hilbert space-valued martingale differences arrays.

	\section{Notations and hypotheses}\label{NotationsAndHypotheses}
	We work on the two spaces $\cE$ and $\cH$. It is worth noting that even though in practice $\cE$ can coincide with $\cH$, we need the two different topological structures $(\cE,d)$ and $(\cH,\scalar{\cdot}{\cdot})$ in order to use the full potential of the functional kernel regression model. While the Hilbert space $\cH$ is normed, the pseudometric $d$ on $\cE$ is not necessarily a metric anyway. The choice of the pseudometric $d$ crucially influences the limiting behavior of the small ball probabilities and consequently also the rates of convergence. We shall see this in more detail below, moreover we refer to the remarks in \cite{ferraty_regression_2012}. We also refer to \cite{laib2010nonparametric} and \cite{laib2011rates} for examples of small ball probability functions of finite- and infinite-dimensional processes.
	
The functional data is ordered in a triangular array because this ensures (formally) the applicability to other types of data than time series, e.g., random fields. For that reason, let $\cS_n= ((X_{n,1},Y_{n,1}),\ldots,(X_{n,n},Y_{n,n}))$	be a functional sample with values in $\cH\times\cE$. The distribution of the pairs $(X_{n,i},Y_{n,i})$ on $\cE\times\cH$ is the same for all $1\le i \le n$ and $n\in\N$. Let $\cF_{n,i}$ be the $\sigma$-algebra generated by $(X_{n,1},Y_{n,1}),\ldots, (X_{n,i},Y_{n,i})$ and $\cG_{n,i}$ be the $\sigma$-algebra generated by $(X_1,Y_1),\ldots,(X_{n,i},Y_{n,i}), X_{n,i+1}$. The closed $\delta$-neighborhood of $x$ in $(\cE,d)$ is abbreviated by $U(x,\delta) = \{y\in\cE: d(x,y)\le \delta\}$. Write $F_x(h)$ (resp.\ $F_x^{\cF_{n,i-1}} (h)$) for the distribution function of the random variable $d(x,X_{n,i})$ (resp.\ the conditional distribution function given $\cF_{n,i-1}$), where $x\in\cE$ and $i\in\N$. 

	We write $\norm{\cdot}$ for the norm on the Hilbert space $\cH$ which is induced by $\scalar{\cdot}{\cdot}$. If $A,v\in\cH$, we also write $A_v = \scalar{A}{v}$, this abbreviation will be useful if we consider projections of $\cH$-valued functions. Moreover, if $\zeta$ is a real-valued (resp.\ $\cH$-valued) random function which satisfies $\zeta(u)/u \rightarrow 0$ $a.s.$ (resp.\ $\norm{\zeta(u)}/u \rightarrow 0$ $a.s.$) as $u\rightarrow 0$, we write $\zeta(u)= \oas(u)$. In the same way, we say that $\zeta(u)$ is $\Oas(u)$ if $\zeta(u)/u$ (resp.\ $\norm{\zeta(u)}/u$) is $a.s.$ bounded as $u\rightarrow 0$. We write $\norm{\cdot}_{\p,p}$ for the $p$-norm of a real-valued random variable w.r.t.\ the probability measure $\p$. Moreover, we abbreviate the conditional expectation (resp.\ conditional distribution) of a random variable $Z$ given the sample $\cS_n$ by $\Ec{Z}$ (resp.\ $\p^*(Z\in \cdot)$).
	
	A Borel probability measure $\mu$ on $\cH$ is a Gaussian measure if and only if its Fourier transform $\hat\mu$ is given by
	$
			\hat\mu(x) \equiv \exp(	i\scalar{m}{x} - \scalar{\cC x}{x}/2 ),
	$
	where $m\in\cH$ and $\cC$ is a positive symmetric trace class operator on $\cH$. $m$ is the mean vector and $\cC$ is the covariance operator of $\mu$. In particular, $\int_\cH \norm{x}^2 \mu(\intd{x}) = \operatorname{Tr} \cC + \norm{m}^2$. We also write $\fG(m,\cC)$ for this measure $\mu$.

	The kernel estimator is defined for a kernel function $K$, a bandwidth $h>0$ and a sample $\cS_n$ as	
	\begin{align}
	\begin{split}\label{Eq:DefEstimator}
			&\hat{r}_{n,h}(x) \coloneqq \frac{ \hat{r}_{n,h,2}(x) }{ \hat{r}_{n,h,1}(x)  }	\quad \text{ where } \hat{r}_{n,h,j}(x) \coloneqq ( n \, \E{ \Delta_{1,h,1}(x) } )^{-1}  \sum_{i=1}^n Y_{n,i}^{j-1} \Delta_{n,h,i}(x) \\
				&\qquad\qquad\qquad\qquad\qquad\qquad \text{ for } \Delta_{n,h,i}(x) = K( d(x,X_{n,i})/h ) \text{ and } j=1,2.
	\end{split}\end{align}

	Our framework corresponds largely to that in \cite{laib2010nonparametric} and \cite{laib2011rates}. However, we need at some points stricter assumptions because we consider the case where both response and predictor are of a functional nature and also study residual bootstrap procedures. We investigate the model at an arbitrary but fixed point $x\in\cE$ and assume the following hypotheses. For the sake of brevity, we give the range of the indices already at this point and omit this within the hypotheses: $y\in\cE$, $m,n,k\in \N$ while $1\le i,j\le n$ unless mentioned otherwise.
	\begin{itemize}\setlength\itemsep{0em}

	\item [\mylabel{C:Kernel}{(A1)}]
	 $K$ is a nonnegative bounded kernel of class $C^1$ over its support $[ 0 , 1 ]$. The derivative $K'$ exists on $[ 0 , 1 ]$ and satisfies the condition $K'(t)\le 0$ for all $t\in [0,1]$ and $K(1)>0$.

	\item [\mylabel{C:SmallBall}{(A2)}]		
		\begin{itemize} \setlength\itemsep{0em}
		\item [\mylabel{C:SmallBall1}{(i)}] 
			$
			 |F^{\cF_{n,i-1}}_{y_1}(u)-F^{\cF_{n,i-1}}_{y_2}(u)|/F^{\cF_{n,i-1}}_x(u) \le \tilde{L}_{n,i}\, d(y_1,y_2)^\alpha
			$
			$a.s.$ for all $y_1,y_2\in\cE$, for an $\tilde{L}_{n,i},\in\R_+$, $\alpha\in (0,1]$ uniformly in $u$ in a neighborhood of 0. Moreover, 
			$
						 [F^{\cF_{n,i-1}}_{x}(u+\veps)-F^{\cF_{n,i-1}}_{x}(u-\veps)] / F^{\cF_{n,i-1}}_x (u) \le \tilde{L}_{n,i}\, \veps\, u^{-1}
			$
			$a.s.$ at $x$ for $0\le \veps<u$ and $u$ in a neighborhood of 0. $\limsup_{n\rightarrow\infty}n^{-1}\sum_{i=1}^n \tilde{L}_{n,i}^2 = \tilde{L}<\infty$.
			\end{itemize}
	There is a sequence of nonnegative bounded random functionals $( f_{n,i , 1} )_{ 1\le i\le n}$, a sequence of random functions $( g_{n,i , y} )_{ 1\le i\le n}$, $y\in\cE$, a deterministic nonnegative bounded functional $f_1$ and a nonnegative real-valued function $\phi$ tending to 0, as its argument tends to 0 such that
	\vspace{-.5em}
	\begin{itemize} \setlength\itemsep{0em}
			\item [\mylabel{C:SmallBall2}{(ii)}] $F_y(u) = \phi(u) f_1(y) + o(\phi(u))$ ($u \rightarrow 0$), $y\in\cE$.
			
			\item [\mylabel{C:SmallBall3}{(iii)}] $F_y^{\cF_{n,i-1}}(u) = \phi(u) f_{n,i,1}(y) + g_{n,i,y}(u)$ such that $|g_{n,i,y}(u)| = \oas(\phi(u))$ and
			\begin{itemize}\setlength\itemsep{0em}
				
			\item $n^{-1} \sum_{i=1}^n |g_{n,i,y}(u)| = \oas(\phi(u))$ as $n\rightarrow \infty$.
			\item $\norm{f_{n,i,1}(y) }_{\p, \infty} \le \tilde{L}_{n,i}$ and $\phi(u)^{-1} \norm{ g_{n,i,y}(u)  }_{\p, \infty} \le \tilde{L}_{n,i}	$	 for $u \le \delta$, $\forall y\in U(x,\delta)$, for some $\delta>0$. 
				\end{itemize}		
		
			\item [\mylabel{C:SmallBall4}{(iv)}] $\lim_{n\rightarrow\infty} n^{-1} \sum_{i=1}^n f_{n,i,1}(x) = f_1(x) >0$ $a.s.$

			\item [\mylabel{C:SmallBall5}{(v)}] $\sup_{u\in [0,1] }	| \phi(hu)/\phi(h) - \tau_0(u) | = o(1)$ as $ h\downarrow 0$ for some $\tau_0\colon [0,1]\to [0,1]$.
			
	\end{itemize}

	\item [\mylabel{C:Response}{(A3)}]

	\begin{itemize}\setlength\itemsep{0em}

		\item [\mylabel{C:Response1}{(i)}] $\E{ Y_{n,i} | \cG_{n,i-1} } \equiv r(X_{n,i})$ $a.s.$
		
		\item [\mylabel{C:Response2}{(ii)}] $\E{ \scalar{e_j}{Y_{n,i}-r(X_{n,i})} \scalar{e_k}{Y_{n,i}-r(X_{n,i})} | \cG_{n,i-1} } \equiv W_{2,j,k} (X_{n,i})$ for $1\le j,k\le n$ such that
			\begin{itemize}\setlength\itemsep{0em}
			\item		$\sup_{j,k \in \N} \sup_{y\in U(x, h)  } |W_{2,j,k}(y) - W_{2,j,k}(x)| = o(1)$ 
			\item $\sup_{y \in U(x,h) }  | \sum_{k\in\N} W_{2,k,k}(y) - W_{2,k,k} (x) | = o(1) $
			\item $\sup_{y\in U(x,\delta) } \sum_{k>m } W_{2,k,k}(y) + r_{e_k}(y)^2 \le a_0 \exp(-a_1 m) $	for certain $a_0,a_1>0$.
				\end{itemize}
				
				\item [\mylabel{C:Response3}{(iii)}] $\sup_{y\in U(x,\delta) }\E{\norm{Y_{n,i}}^{(2+\delta') \cdot m} | X_{n,i}= y, \cF_{n,i-1} } \le m! \tilde{H}^{m-2}$ for all $1\le j \le 2+\delta'$ for some $\tilde{H} \ge 1$ and for some $\delta'>0$.
				
	\end{itemize}

	\item [\mylabel{C:Regressor}{(A4)}]
	\begin{itemize}\setlength\itemsep{0em}
		
		\item [\mylabel{C:Regressor1}{(i)}] $r_{e_k}$ is continuous and
		$
			\E{	r_{e_k}(X_{n,i}) - r_{e_k}(y) | \cF_{n,i-1}, d(y,X_{n,i})=s } \equiv \psi_{k,y}(s) + \bar{g}_{k,n,i,y}(s),
			$
	where $\psi_{k,y}$ is a deterministic functional and $\bar{g}_{k,n,i,y}$ is a random function. $\psi_{k,y}(0)=0$ and $\psi_{k,y}$ is differentiable in a neighborhood of $(x,0)$ with $\psi'_{k,y}(0) \neq 0$ such that 
	\begin{itemize}\setlength\itemsep{0em} 
	\item $\sup_{ u: 0\le u\le s, y\in U(x,s) } | \psi'_{k,y}(u) - \psi'_{k,x}(0)  | \, s + \norm{\bar{g}_{k,n,i,y}(s) }_{\p,\infty} \le L_k s^{1+\alpha}$ ($\alpha$ from \ref{C:SmallBall}).
	\item $\sum_{k\in\N} \psi'_{k,x}(0) ^2 + L_k^2 < \infty$.
		\end{itemize}
		
		\item [\mylabel{C:Regressor2}{(ii)}] 
		$
				\E{ |r_{e_k}(X_{n,i}) - r_{e_k}(x)|^2 | \cF_{n,i-1}, d(x,X_{n,i})=s} \equiv \psi_{2,k,x}(s) + \bar{g}_{2,k,n,i,x}(s),
		$
	where $\psi_{2,k,x}$ is a deterministic functional and $\bar{g}_{2,k,n,i,x}$ is a random function. $\psi_{2,k,x}(0)=0$ and $\psi_{2,k,x}$ is differentiable in a neighborhood of 0 with $\psi'_{2,k,x}(0) \neq 0$ such that 
	\begin{itemize}\setlength\itemsep{0em}
		
		\item	$\sup_{ \substack{u: 0\le u\le s} } |\psi'_{2,k,x}(u) - \psi'_{2,k,x}(0) | \, s + \norm{\bar{g}_{2,k,n,i,x}(s) }_{\p,\infty} \le L_{2,k} s^{1+\alpha}$ ($\alpha$ from \ref{C:SmallBall}).
		\item $\sum_{k>m} |\psi'_{2,k,x}(0)| + L_{2,k}  \le a_0 \exp(-a_1 m)$ where $a_0,a_1$ are from \ref{C:Response}.
	\end{itemize}
	\end{itemize}

	\item [\mylabel{C:Bandwidth}{(A5)}] $h,b\rightarrow 0$, $h/b\rightarrow 0$, $(n\phi(h))^{1/2} (\log n)^{-[(2+\alpha)\vee (1/2)]} \rightarrow \infty$ ($\alpha$ from \ref{C:SmallBall}), $[\phi(h)/\phi(b) ] (\log n)^2 = o(1)$, $h(n \phi(h))^{1/2} = \cO(1)$, $b^{1+\alpha} (n\phi(h))^{1/2}=o(1)$, $b = o( h \sqrt{n \phi(h)})$, $h(\log n)^{1/2}=o(1)$.

	\item [\mylabel{C:Covering}{(A6)}] For each $n$ there are $\kappa_n\in\N_+$, $\ell_n > 0$ and points $z_{n,1},\ldots,z_{n,\kappa_n}$ such that $U(x,h) \subseteq \bigcup_{u=1}^{\kappa_n} U(z_{n,u },\ell_n)$ with $\kappa_n = \cO(n^{b/h})$ and $\ell_n=o(b (n\phi(h))^{-1/2}  (\log n)^{-1} )$.

	\end{itemize}

	Moreover, we define moments which are independent of the location $x\in\cE$
	\begin{align*}
			M_0 &\coloneqq  K(1) - \int_0^1 ( K(s) s )' \tau_0(s) \intd{s} \text{ and } M_j \coloneqq K^j(1) - \int_0^1 ( K^j(s) )' \tau_0(s) \intd{s} \text{ for } j\ge 1.
	\end{align*}

\ref{C:Kernel} is very usual in nonparametric functional estimation. Since $K(1)>0$ and $K'\le 0$, $M_1>0$ for all limit functions $\tau_0$. In particular, the positivity of $K(1)$ is necessary as the small ball probability function $\tau_0$ equals the Dirac $\delta$-function at 1 in the case of non-smooth processes, see \cite{ferraty_nonparametric_2007}. So that the moments $M_j$ are determined by the value $K(1)$ in this special case.

	Assumption~\ref{C:SmallBall} is crucial for all results in this paper because it determines the limiting behavior of the kernel estimates. The functionals $f_{n,i,1}$ and $f_1$ play the role of conditional and unconditional densities. The function $\phi(u)$ characterizes the impact of the radius $u$ on the small ball probability when $u$ tends to 0. Many smooth and non-smooth processes satisfy the \ref{C:SmallBall}, see \cite{laib2010nonparametric} and \cite{laib2011rates} for some examples. Moreover, a profound survey on small ball probabilities of Gaussian processes give \cite{li2001gaussian}.

	As we also study the bootstrap in this model, it is also necessary that some assumptions in \ref{C:Response} and \ref{C:Regressor} hold uniformly in a neighborhood of the function $x$. Condition~\ref{C:Response} is a kind of Markov-type condition and characterizes the conditional means and the conditional covariance operator of the error terms. Consider a heteroscedastic regression model $Y_{n,i} = r(X_{n,i})+\varsigma(X_{n,i}) \epsilon_{n,i}$ where $\varsigma$ is real-valued and where the innovations $\epsilon_{n,i}$ are martingale differences w.r.t.\ $\cG_{n,i-1}$ with $\E{\epsilon_{e_j,n,i} \epsilon_{e_k,n,i} |\cG_{n,i-1}} \in\R$. Then, both the conditional expectation and the conditional covariance operator only depend on $X_{n,i}$ on not on further observations from the past. The tail behavior and the continuity of the covariance operator ensure almost the same rates as for a real-valued response. The moment condition on the conditional expectation of $Y$ is fairly mild.

	Assumption~\ref{C:Regressor} concerns the continuity of the components of the regression operator. A version of \ref{C:Regressor}~\ref{C:Regressor1} is also used in \cite{ferraty_regression_2012}. \ref{C:Regressor}\ref{C:Regressor1} and \ref{C:Regressor2} mean that the conditional expectation of the (squared) difference of $r_{e_k}(X_{n,i}) -r_{e_k}(x)$ given the past and the distance $d(x,X_{n,i})$ is dominated by the distance if it is small.

	The assumptions on the bandwidth in \ref{C:Bandwidth} and on the covering of the neighborhood of $x$ in \ref{C:Covering} are very similar as those in \cite{ferraty_validity_2010} who consider the bootstrap for a real-valued response. Clearly, the oversmoothing bandwidth $b$ has to satisfy $b/h \rightarrow \infty$ to make the bootstrap work. Also note that the optimal choice for the bootstrap bandwidth $h$ which is $h=\cO( (n\phi(h))^{-1/2} )$ is allowed as in \cite{ferraty_validity_2010}.

We can verify \ref{C:Covering} in the case of smooth function classes. Let $\cD$ be a compact and convex subset of $\R^d$ with nonempty interior. Let $\gamma>0$ and $\underline{\gamma}$ be the greatest integer smaller than $\gamma$. Define the differential operator 
	\[
				D^k = \frac{\partial^{k.} }{\partial t_1^{k_1} \cdots \partial t_d^{k_d} },
	\]
	where $k=(k_1,\ldots,k_d)$ and $k. = \sum_{i=1}^d k_i$. Then set for a function $x\colon\cD\rightarrow \R$ 
	\begin{align}\label{Def:NormCM}
		\norm{x}_\gamma = \max_{k. \le \underline{\gamma} } \sup_{t\in\cD} |D^k x(t)| + \max_{k.=\underline{\gamma}} \, \sup_{\substack{s,t\in \cD,\\s\neq t} } \frac{|D^k x(s)-D^k x(t)| }{\norm{s-t}_2^{\gamma-\underline{\gamma}} },
	\end{align}
	where $\norm{\cdot}_{2}$ is the Euclidean norm on $\R^d$. Let $M>0$ and define by $C^\gamma_M = \{ x:\cD\rightarrow \R, \norm{x}_\gamma \le M\}$ the class of functions that posses uniformly bounded partial derivatives up to order $\gamma$ and whose highest partial derivatives are Lipschitz-continuous of order $\gamma-\underline{\gamma}$.

	Let $\mu$ be a finite Borel measure on $\cD$ and $p\in [1,\infty]$. Write $\norm{x}_{L^p(\mu)}$ for the $L^p$-norm of a function $x\colon\cD\rightarrow \R$ and $\norm{x}_\infty$ for its supremum norm. Denote the $\veps$-covering number of the set $C^\gamma_M$ w.r.t.\ $\norm{\cdot}_\infty$ (resp.\ $\norm{\cdot}_{L^p(\mu)}$) by $N(\veps,C^\gamma_M,\norm{\cdot}_{\infty})$ (resp.\ $N(\veps,C^\gamma_M,\norm{\cdot}_{L^p(\mu)})$). It follows from Theorem 2.7.1 in \cite{van2013weak} that $\log  N(\veps,C^\gamma_1,\norm{\cdot}_{\infty})\le C  \veps^{-d/\gamma}$ for a constant $C$ which depends on the domain $\cD$. Moreover,
	\[
				N(\veps,C^\gamma_M,\norm{\cdot}_{L^p(\mu)}) \le  N(\veps \mu(\cD)^{-1/p},C^\gamma_M,\norm{\cdot}_{\infty}) \le  N(\veps \mu(\cD)^{-1/p} M^{-1},C^\gamma_1,\norm{\cdot}_{\infty}).
	\]
	Now assume that $\cE = C^\gamma_M$ for some $M,\gamma>0$ and that $d$ is the metric derived from $\norm{\cdot}_{\infty}$ or from $\norm{\cdot}_{L^p(\mu)}$. It is straightforward to demonstrate that \ref{C:Covering} is satisfied in the case that $h b^{-(1+d/\gamma)} (\log n)^{-1+d/\gamma} = o(1)$ and $\ell_n = bh (\log n)^{-1}$. See also \cite{ferraty_validity_2010} who give this condition for functions defined on an interval, i.e., $d=1$.

	Another well-known example is a separable Hilbert space $\cE$ with orthonormal basis $\{f_j:j\in\N\}$ and inner product $\scalar{\cdot}{\cdot}$. Consider the pseudometric $d(x,y) = (\sum_{j=1}^p \scalar{x-y}{f_j}^2 )^{1/2}$ for $p\in\N_+$. In this case the covering condition is satisfied if $n^{b/h} b^p (\log n)^{-p} \rightarrow \infty$, compare again \cite{ferraty_validity_2010}.

	Examples of stationary ergodic data which apply to time series are given in \cite{laib2010nonparametric}. In this paper, we focus an the spatial applications of the present framework and consider a spatial autoregressive (SAR) process.

	\begin{example}\label{Ex:SAR}[Spatial autoregressive processes]
	Let $\cD$ be a compact and convex subset of $\R^d$. Let $\cH$ be the Hilbert-space of all functions $x\colon \cD\rightarrow\R$ which are square-integrable w.r.t. Lebesgue measure. Write $\norm{\cdot}$ for the norm on $\cH$ induced by the inner product. Let $\cE$ be a subspace of $\cH$. The (pseudo-) metric $d$ is assumed to be translation invariant, which means that $d(x,z)=d(x+y,z+y)$ for all $x,y,z\in\cE$. Assume that $X$ takes values in $\cE$ and is a stationary first-order spatial autoregressive process on the lattice $\Z^2$ endowed with the four-nearest neighbor structure, viz.
	\begin{align}\label{Eq:SAR1}
				X_{(i,j)} = \theta_{1,0}( X_{(i-1,j)} ) + \theta_{0,1}( X_{(i,j-1)} ) + \epsilon_{(i,j)}, \quad (i,j)\in\Z^2,
	\end{align}
	for two operators $\theta_{1,0}, \theta_{0,1}\colon \cH\rightarrow\cH$ and for i.i.d.\ innovations $\epsilon_{(i,j)}$. The latter also take their values $\cE$.
	
	Denote the norm of a linear operator on $\cH$ by $\norm{\cdot}_{\cL(\cH)}$. If $\theta_{\ell,1-\ell}$ are linear with $\norm{\theta_{\ell,1-\ell}}_{\cL(\cH)} < 1/2$ ($\ell\in\{0,1\}$), one can show similar as in \cite{bosq_linear_2000} that $X$ has the stationary solution
	\begin{align}\label{Eq:SAR2}
				X_{(i,j)} = \sum_{u=0}^\infty \sum_{ k \in \{0,1\}^u } \left(\prod_{\ell=1}^u \theta_{k_\ell,1-k_\ell} \right) (\epsilon_{(i-k.,j-(u-k.))}),
	\end{align}
	where $k. = \sum_{\ell=1}^u k_\ell$. This series converges $a.s.$ and in $L^2_\cH(\p)$, see the continuation of this example in \ref{Pf:ExSAR}~\ref{ExA}. This also corresponds to the findings in \cite{basu1993properties} and \cite{bustos2009spatial} who study real-valued linear spatial processes. 
	
If we assume a more general situation of the model \eqref{Eq:SAR1} in which $\theta_{1,0}$ $\theta_{0,1}$ are smooth (see \eqref{C:Smoothness}) and (nonlinear) Lipschitz-continuous operators w.r.t. $\|\cdot\|$ with Lipschitz constant smaller than $1/2$, one can show that the $X_{(i,j)}$ are $\cE$-valued provided the error terms $\epsilon_{(i,j)}$ take values in a sufficiently small subspace $\cE'$. For instance, let $\cE' \subseteq C^1_{M'}$ where the latter is the space of all Lipschitz-continuous functions $x\in \cH$ with $\norm{x}_1\le M'$ for some $M'>0$ and where $\norm{\cdot}_1$ is defined in \eqref{Def:NormCM}. Then the $L^2$-norm of the functions in $\cE'$ is at most $M' |\cD|^{1/2}$, where $|\cD|$ is the Lebesgue measure of $\cD$. 
	
	Moreover, if $\ell\in\{0,1\}$ and if both operators satisfy the smoothness condition
	\begin{align}\label{C:Smoothness}
				|\theta_{\ell,1-\ell} (x)(t) - \theta_{\ell,1-\ell} (x)(s)| \le A\, (1+\norm{x})\, |t-s|,
	\end{align}
	for a certain $A\in\R_+$, for all $x\in\cH$ and all $s,t\in\cD$, then one can show that also the norms $\| X_{(i,j)} \|$, $\| X_{(i,j)} \|_{1}$ are uniformly bounded above, in particular, the $X_{(i,j)}$ take their values in $C^1_M$ for some $M\in\R_+$, see also \ref{Pf:ExSAR}~\ref{ExB} for more details.
	
	In the following, we study the conditional small ball probability structure of the SAR process. For simplicity, we do this on a rectangular index set. Let $n_1,n_2$ be two integers and consider $\cI_{n_1,n_2} = \{	(i,j)\in\Z^2: 1\le i\le n_1, 1\le j \le n_2	\}$. Write $d_k$ for the (partial) diagonal in the plane $\Z^2$ which contains all points $(i,j) \in \cI_{n_1,n_2}$ such that $i+j-1 = k$. Let $\iota\colon \{1,\ldots,n_1 n_2\}\rightarrow \cI_{n_1,n_2}$ be the enumeration of $\cI_{n_1,n_2}$ which processes these diagonals $d_k$ in decreasing order starting with the largest value of $k$ (which is $ n_1 + n_2 - 1$ in this case) and which processes the pairs within each diagonal $d_k$ in increasing $x$-coordinate dimension.
	We can use $\iota$ to construct a line in the triangular array that contains the sample data. In particular, we can construct a filtration $(\cF_k)_{1\le k \le n_1 n_2}$, where $\cF_k = \sigma\{ X_{(i,j)}: 1\le \iota^{-1} ((i,j)) \le k \}$. 
	
	Consider the small ball probability for the event $d(x,X_{(i,j)}) \le h$ for a certain point $x\in\cE$, $h>0$ and a pair $(i,j)\in\cI_{n_1,n_2}$ such that $i,j>1$. (The number of pairs $(1,j)$ and $(i,1)$ is negligible if $n_1,n_2\rightarrow\infty$.) Using the conditional structure of the SAR process and the definition of the filtration, we find for a point $\iota(k) = (i,j)$ that
	\begin{align*}
				\p( d(x,X_{(i,j)})\le h | \cF_{k-1} ) &= \p( d(x,X_{\iota(k) }) \le h  | \cF_{k-1} ) \\
				&= \p( d(x-z,\epsilon_{(i,j) })\le h ) |_{ z= \theta_{1,0}( X_{(i-1,j)} ) + \theta_{0,1}( X_{(i,j-1)} ) }.
	\end{align*}
So the asymptotic form of the small ball probability of the innovations is of interest. Let the distribution of the $\epsilon_{(i,j)}$ on $\cE$ admit a Radon-Nikod{\'y}m derivative $\kappa$ w.r.t.\ a Borel measure $\mu$ on $\cE$. The latter satisfies
\begin{itemize} \setlength\itemsep{0em}
	\item [(i)] there is a $z\in\cE$ such that $\mu( U(z,u) ) \ge \mu( U(y,u) )$ for all $y\in\cE$ and $u$ in a neighborhood of 0,
	\item [(ii)] $\lim_{u\downarrow 0} \mu(U(y,u))=0$ and $\lim_{u\downarrow 0} \mu(U(y,u) ) / \mu( U(z,u) ) = c_y \in [0,1]$ for all $y\in\cE$,
	\end{itemize}
	Moreover, if $\sup_{y\in U(x,h)} |\kappa (y) -\kappa (x)| = o(1)$ ($h\downarrow 0$), we obtain
	\begin{align*}
	\p( d(\epsilon_{(i,j)},x) \le h ) &= \kappa(x) c_x \mu( U(z,h) ) + \int_{U(x,h) } \kappa(y) - \kappa(x) \mu(\intd{y}) \\
	&\quad +  \kappa(x) \left\{\mu( U(x,h) )  - c_x \mu( U(z,h) ) \right\}.
	\end{align*}
Set now $f_\epsilon(x) = \kappa(x) c_x$ and $\phi(u) = \mu( U(z,u)) $. The last two terms on the right-hand side are then $o(\phi(h))$. We arrive at the representation
$	\p( d(\epsilon_{(i,j)},x) \le h ) = f_\epsilon(x) \phi(h) + o(\phi(h))$. If the innovations $\epsilon_{(i,j)}
$ take values in a sufficiently small subspace (e.g., as sketched above $\cE' \subseteq C^1_M$), the assumptions in \ref{C:SmallBall} can be satisfied. See also \cite{laib2011rates} who use results of \cite{lipcer1972absolute} for examples of nonsmooth processes. Hence, $X$ is stationary ergodic in the sense of the above framework if
\begin{align}
			& (n_1 n_2)^{-1} \sum_{\substack{1\le i \le n_1\\ 1\le j \le n_2}}	f_\epsilon(x - \theta_{1,0}( X_{(i-1,j)} ) - \theta_{0,1}( X_{(i,j-1)} )  ) \rightarrow \E{	f_\epsilon(x - \theta_{1,0}( X_{(0,1)} ) - \theta_{0,1}( X_{(1,0)} )  )	} \quad a.s. \nonumber
	\end{align}
whenever $n_1, n_2 \rightarrow \infty$.	The last expectation is then the function $f_1$ from \ref{C:SmallBall} evaluated at $x$.
\end{example}

	We conclude this section with some definitions, set
	\begin{align}
	\begin{split}\label{Eq:DefCondEstimator}
			&C_n(x) \coloneqq \frac{\bar{r}_{n,h,2}(x) }{\bar{r}_{n,h,1}(x) } \text{ where } \bar{r}_{n,h,j} \coloneqq \frac{\sum_{i=1}^n \E{Y_{n,i}^{j-1} \Delta_{n,h,i}(x)  | \cF_{n,i-1} } }{ n \E{\Delta_{1,h,1}(x) } }, \quad j=1,2.
		\end{split}
	\end{align}
	This enables us to define the conditional bias of the estimator by
	\begin{align}
	\begin{split}\label{Eq:DefCondBias}
			&B_n(x) \coloneqq C_n(x) - r(x) = \frac{\bar{r}_{n,h,2}(x) }{\bar{r}_{n,h,1}(x) } - r(x).
		\end{split}
	\end{align}

	\section{Main results}\label{MainResults}
We present the main results. The first is an extension of the result of \cite{laib2010nonparametric} and considers the limit distribution of the estimated regression operator at a point $x\in\cH$. A similar statement for $\cH$-valued i.i.d. pairs can be found in \cite{ferraty_regression_2012}. 

	\begin{theorem}[Asymptotic normality]\label{Thrm:GaussianLimit}
	Assume that the hypotheses \ref{C:Kernel} to \ref{C:Bandwidth} are satisfied. Then
	\begin{align}\label{Eq:GaussianLimit}
				\sqrt{ n \phi(h)} \, (\hat{r}_{n,h}(x) - r(x) - B_n(x) ) \rightarrow \fG(0,\cC_x) \text{ in distribution},
	\end{align}
	where for each $v = \sum_{k=1}^\infty \gamma_k e_k \in\cH$ the covariance operator $\cC_x$ is characterized by the condition 
	\begin{align}\label{Eq:GaussianLimit2}
			\scalar{\cC_x v}{v} = \sigma^2_{v,x} = \frac{M_2}{M_1^2 f_1(x)} \bigg\{ \sum_{j,k} \gamma_j \gamma_k W_{2,j,k}(x) \bigg\}.
	\end{align}
	\end{theorem}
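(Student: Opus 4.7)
The strategy is to establish asymptotic normality of $\hat{r}_{n,h}(x)-C_n(x)$ via a Hilbert-space martingale CLT, after reducing the ratio $\hat{r}_{n,h,2}/\hat{r}_{n,h,1}$ to a centered sum through the linearization
\[
\hat{r}_{n,h}(x)-r(x)-B_n(x)=\frac{1}{\hat{r}_{n,h,1}(x)}\bigl\{Q_{n,2}(x) - C_n(x)\,Q_{n,1}(x)\bigr\},\quad Q_{n,j}\coloneqq\hat{r}_{n,h,j}-\bar{r}_{n,h,j}.
\]
A preliminary step based on \ref{C:SmallBall}, \ref{C:SmallBall4}, \ref{C:Bandwidth}, and the expansion $\E{\Delta_{1,h,1}(x)}=M_1\phi(h)f_1(x)(1+o(1))$ obtained by integration by parts against $K$ shows that $\hat{r}_{n,h,1}(x)\pconv 1$ and $C_n(x)\pconv r(x)$ in $\cH$. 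Slutsky's theorem then reduces the theorem to
\[
S_n\coloneqq\frac{\sqrt{n\phi(h)}}{n\,\E{\Delta_{1,h,1}(x)}}\sum_{i=1}^{n}\!\bigl\{(Y_{n,i}-r(x))\Delta_{n,h,i}(x)-\E{(Y_{n,i}-r(x))\Delta_{n,h,i}(x)\mid\cF_{n,i-1}}\bigr\}\dconv\fG(0,\cC_x),
\]
a normalized sum of $\cH$-valued $(\cF_{n,i})_i$-martingale differences.

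For weak convergence in $\cH$ I combine finite-dimensional convergence with tightness. Fix $v=\sum_k\gamma_k e_k$ and consider the real-valued sum $\scalar{S_n}{v}=\sum_i D_{n,i}^{(v)}$. Using the tower property with the coarser filtration $\cG_{n,i-1}\supseteq\cF_{n,i-1}$, and the decomposition $\scalar{Y_{n,i}}{v}-r_v(x)=\scalar{Y_{n,i}-r(X_{n,i})}{v}+(r_v(X_{n,i})-r_v(x))$, the conditional second moment of a summand is, up to a lower-order piece controlled by \ref{C:Regressor}\ref{C:Regressor1},
\[
\E{\Delta_{n,h,i}(x)^2\sum_{j,k}\gamma_j\gamma_k W_{2,j,k}(X_{n,i})\mid\cF_{n,i-1}}+\oas(\phi(h)).
\]
Since $\Delta_{n,h,i}(x)$ is supported on $\{d(x,X_{n,i})\le h\}$, the continuity in \ref{C:Response2} allows replacing $W_{2,j,k}(X_{n,i})$ by $W_{2,j,k}(x)$ modulo $o(1)$; integration by parts against $K^2$ combined with \ref{C:SmallBall3} and \ref{C:SmallBall5} gives $\E{\Delta_{n,h,i}(x)^2\mid\cF_{n,i-1}}=M_2\phi(h)f_{n,i,1}(x)+\oas(\phi(h))$. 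Summing, dividing by $n\phi(h)M_1^2 f_1(x)^2$, and applying the ergodic average in \ref{C:SmallBall4} then yields
\[
\sum_{i=1}^n\E{(D_{n,i}^{(v)})^2\mid\cF_{n,i-1}}\pconv\frac{M_2}{M_1^2 f_1(x)}\sum_{j,k}\gamma_j\gamma_k W_{2,j,k}(x)=\sigma^2_{v,x}.
\]
The Lindeberg condition follows from \ref{C:Response3}: a conditional Marcinkiewicz--Zygmund-type bound gives $\sum_i\E{|D_{n,i}^{(v)}|^{2+\delta'}\mid\cF_{n,i-1}}=\oas(1)$ thanks to the rate constraints of \ref{C:Bandwidth}. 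The martingale CLT of Hall and Heyde then produces $\scalar{S_n}{v}\dconv\cN(0,\sigma^2_{v,x})$, and the Cram\'er--Wold device upgrades this to joint Gaussian limits for $(\scalar{S_n}{v_1},\ldots,\scalar{S_n}{v_m})$.

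Tightness in $\cH$ reduces to $\limsup_n\E{\sum_{k>m}\scalar{S_n}{e_k}^2}\to0$ as $m\to\infty$. A second-moment calculation parallel to the one above, but with $v=e_k$, bounds the tail by $C\sum_{k>m}\{W_{2,k,k}(x)+|\psi'_{2,k,x}(0)|+L_{2,k}\}$, which vanishes by the exponential-decay clauses of \ref{C:Response2} and \ref{C:Regressor2}. Combining finite-dimensional convergence with this tail control yields $S_n\dconv\fG(0,\cC_x)$, and Slutsky's theorem delivers \eqref{Eq:GaussianLimit}.

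The main obstacle I anticipate is the simultaneous handling of the two filtrations $\cG_{n,i-1}$ and $\cF_{n,i-1}$ together with the ergodic average $n^{-1}\sum_i f_{n,i,1}(x)\to f_1(x)$: the $\oas(\phi(h))$ and $\Oas(h^{1+\alpha})$ errors arising from \ref{C:SmallBall3} and \ref{C:Regressor} must be shown to collapse to $\oas$ after the ergodic average, which is exactly the purpose of the uniform $\tilde L_{n,i}$, $L_k$ and $L_{2,k}$ bounds. A secondary difficulty is ensuring that the Hilbert-space tail bound is uniform in $n$; the exponential tails in \ref{C:Response2}\ref{C:Regressor2} were inserted precisely for this reason.
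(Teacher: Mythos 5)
Your proposal is correct and follows essentially the same route as the paper: your identity $\hat{r}_{n,h}-r-B_n=\{Q_{n,2}-C_nQ_{n,1}\}/\hat{r}_{n,h,1}$ is algebraically the paper's $\{Q_n+R_n\}/\hat{r}_{n,h,1}$, and the verification of the conditional variance limit via \ref{C:Response}~\ref{C:Response2} and Lemma-type expansions of $\E{\Delta_{n,h,i}^2\,|\,\cF_{n,i-1}}$, the Lindeberg step via the $(2+\delta')$-moment in \ref{C:Response}~\ref{C:Response3}, and the coordinate tail control via the exponential decay in \ref{C:Response}/\ref{C:Regressor} are exactly the paper's Lemma on the normality of $Q_n$. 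The only (superficial) difference is that the paper invokes the ready-made Hilbert-space martingale CLT of Kundu et al.\ whose three conditions coincide with your ``Cram\'er--Wold plus flat-concentration tightness'' assembly.
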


	The naive and the wild bootstrap have been studied in several variants in functional regression to approximate the asymptotic distribution. However, most results are derived in the case of a real-valued response variable $Y$. The starting point for our analysis is the result from \cite{ferraty_validity_2010}. The bootstrap procedures work as follows:

	The \textit{naive bootstrap} assumes a homoscedastic model. Then for $n\in\N_+$
	\begin{enumerate}\setlength\itemsep{0em}
				\item set $\hat\epsilon_{n,i} \coloneqq Y_{n,i} - \hat{r}_{n,b}(X_{n,i})$ for $i=1,\ldots,n$ and for an oversmoothing bandwidth $b$. Define $\overline{\hat\epsilon}_n \coloneqq n^{-1} \sum_{i=1}^n \hat\epsilon_{n,i}$.
				\item  generate i.i.d.\ $\beps_{n,1},\ldots,\beps_{n,n}$ such that each $\beps_{n,i}$ is uniformly distributed on $\{ \hat\epsilon_{n,i} - \overline{\hat\epsilon}_n: i=1,\ldots,n \}$ when conditioned on the sample $\cS_n=( (X_{n,i},Y_{n,i}):i=1,\ldots,n)$.	
				\item generate bootstrap observations according to $\bY_{n,i} \coloneqq \hat{r}_{n,b}(X_{n,i}) + \beps_{n,i}$ for $i=1,\ldots,n$.
				\item define $\bootr(x) \coloneqq ( \sum_{i=1}^n \bY_{n,i} \Delta_{n,h,i}(x) ) / ( \sum_{i=1}^n \Delta_{n,h,i}(x) )$ for $x\in\cE$.
				\end{enumerate}

	The \textit{wild bootstrap}, proposed originally by \cite{wu1986jackknife}, assumes a heteroscedastic model. For that reason the definition of the bootstrap innovations in the second step has to be altered, viz., $\beps_{n,i} \coloneqq \hat{\epsilon}_{n,i} V_{n,i}$ where $V_{n,1},\ldots,V_{n,n}$ are i.i.d. real-valued, centered random variables independent of the sample $\cS_n$ which satisfy $\E{V_{n,i}^2}=1$ and $\E{|V_{n,i}|^{2+\delta'}}<\infty$ with $\delta'$ introduced in \ref{C:Response}~\ref{C:Response3}. At this step \cite{mammen1993bootstrap} proposes a resampling such that also the third conditional moment remains unchanged. The last two steps in the resampling scheme are the same as for the naive bootstrap.

	In the homoscedastic model we need an additional hypothesis concerning the distribution of the empirical residuals. Write $\epsilon_{n,i}$ for the residual $Y_{n,i}-r(X_{n,i})$. Then assume
	\begin{itemize}\setlength\itemsep{0em}
		\item [\mylabel{C:NaiveBS}{(A7)}] 	\qquad $\overline{\hat{\epsilon}}_n \rightarrow 0$ $a.s.$,\quad $n^{-1} \sum_{i=1}^n \norm{\hat\epsilon_{n,i}}^2 \rightarrow \E{\norm{\epsilon_{1,1} }^2}$ $a.s$,\quad  $n^{-1} \sum_{i=1}^n \norm{ \hat\epsilon_{n,i} }^{2+\delta'} = \Oas(1)$\\
		$\qquad$ and $n^{-1} \sum_{i=1}^n \hat{\epsilon}_{e_j,i} \hat\epsilon_{e_k,i} \rightarrow W_{2,j,k}\;  a.s. \; \forall \, j,k\in\N$,
	\end{itemize}
	where $ W_{2,j,k} = \E{ \scalar{Y_{1,1}-r(X_{1,1})}{e_j} \scalar{Y_{1,1}-r(X_{1,1})}{e_k} } = \E{ \scalar{\epsilon_{1,1}}{e_j}\scalar{\epsilon_{1,1}}{e_k} } $.

One can prove \ref{C:NaiveBS} if the estimate of the regression operator is uniformly consistent. We do not consider this issue any further in this manuscript and refer the reader to \cite{ferraty2006nonparametric} and \cite{laib2011rates} who derive uniform rates of convergence for the kernel estimator in the case of a real-valued response variable.

	Define the conditional bias of the bootstrap by
	\begin{align}\label{Eq:CondBiasBoot}
				B^*_n(x) \coloneqq \frac{\bar{r}^*_{n,b,h,2}(x) }{ \hat{r}_{n,h,1}(x) } - \hat{r}_{n,b} (x),&\\
				\text{where } \bar{r}^*_{n,b,h,2} & \coloneqq (n \E{\Delta_{1,h,1}(x) })^{-1}\sum_{i=1}^n \Ec{Y_{n,i}^{*} \Delta_{n,h,i}(x)  | \cF^*_{n,i-1} } \nonumber \\
				&= (n \E{\Delta_{1,h,1}(x) })^{-1}\sum_{i=1}^n \hat{r}_{n,b}(X_{n,i}) \Delta_{n,h,i}(x). \nonumber
	\end{align}
	We come to the second main result of this article which is the characterization of the consistency of the bootstrap. Therefore, we use that $B^*_n(x) = B_n(x) + \oas((n\phi(h))^{-1/2})$, see Lemma~\ref{L:ConvergenceBias}. Define on $\cH$ the probability measures
	\begin{align}\begin{split}\label{DefMeasures}
			&\mu_{n,x} \coloneqq \cL\left( \sqrt{n \phi(h) } \, (\hat{r}_{n,h}(x) - r(x)) \right),
	\\
			&\qquad\qquad\qquad \mu^*_{n,x}  \coloneqq \cL^*\left( \sqrt{ n \phi(h)} \, (\bootr(x) - \hat{r}_{n,b}(x) )\right).
			\end{split}	\end{align}		
	The central result of the manuscript is as follows.
	\begin{theorem}\label{Thrm:GaussianLimitBootstrap}
	Let \ref{C:Kernel} -- \ref{C:Covering} be satisfied. Let $\Psi\colon\cH\to\R$ be bounded and Lipschitz-continuous. Assume a heteroscedastic model. Then the estimator $\bootr(x)$ based on the wild bootstrap approximates $\hat{r}_{n,h}(x) $ in the sense that
	\begin{align}\label{Eq:GaussianLimitBootstrap}
				\lim_{n\rightarrow \infty}	 \left|	\int_\cH \Psi \intd{\mu^*_{n,x} } - \int_\cH \Psi \intd{\mu_{n,x} } \right| =0 \quad a.s.
	\end{align}
	Furthermore if additionally hypothesis \ref{C:NaiveBS} is satisfied, then the same statements are true for the naive bootstrap procedure in a homoscedastic model. 
	\end{theorem}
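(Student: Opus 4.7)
The plan is to combine Theorem~\ref{Thrm:GaussianLimit} with an almost-sure conditional Gaussian limit for the bootstrap statistic. Using Lemma~\ref{L:ConvergenceBias}, $B^*_n(x)=B_n(x)+\oas((n\phi(h))^{-1/2})$, so by the bounded-Lipschitz metric characterization of weak convergence the claim \eqref{Eq:GaussianLimitBootstrap} reduces to showing that, conditionally on $\cS_n$,
\begin{equation*}
T^*_n\coloneqq\sqrt{n\phi(h)}\bigl(\bootr(x)-\hat{r}_{n,b}(x)-B^*_n(x)\bigr)\;\lawconv\;\fG(0,\cC_x)\quad a.s.
\end{equation*}
Expanding $\bootr(x)$ via $\bY_{n,i}=\hat{r}_{n,b}(X_{n,i})+\beps_{n,i}$ and using $\sum_i\Delta_{n,h,i}(x)=n\E{\Delta_{1,h,1}(x)}\hat{r}_{n,h,1}(x)$, together with $\hat{r}_{n,h,1}(x)\to 1$ a.s.\ (from the martingale decomposition used to prove Theorem~\ref{Thrm:GaussianLimit}) and $\E{\Delta_{1,h,1}(x)}\sim M_1 f_1(x)\phi(h)$ (from \ref{C:SmallBall}\ref{C:SmallBall2},~\ref{C:SmallBall5}), a Slutsky step reduces the task to the $\cH$-valued sum
\begin{equation*}
S^*_n\coloneqq\bigl(\sqrt{n\phi(h)}\,M_1 f_1(x)\bigr)^{-1}\sum_{i=1}^n\beps_{n,i}\Delta_{n,h,i}(x).
\end{equation*}
For the wild bootstrap, $\beps_{n,i}=\hat{\epsilon}_{n,i}V_{n,i}$ with i.i.d.\ centered unit-variance $V_{n,i}$ independent of $\cS_n$; given $\cS_n$ the summands of $S^*_n$ are thus independent centered $\cH$-valued random variables, and the plan is to apply a conditional Hilbert-space Lindeberg--Feller CLT.

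The core step is the Cram\'er--Wold reduction: for each $v=\sum_k\gamma_k e_k\in\cH$ I would establish
\begin{equation*}
\operatorname{Var}^*\!\bigl(\scalar{S^*_n}{v}\bigr)=\frac{1}{n\phi(h)\,M_1^2 f_1(x)^2}\sum_{i=1}^n\scalar{\hat{\epsilon}_{n,i}}{v}^2\Delta_{n,h,i}(x)^2\;\longrightarrow\;\sigma^2_{v,x}\quad a.s.
\end{equation*}
First replace $\hat{\epsilon}_{n,i}$ by $\epsilon_{n,i}=Y_{n,i}-r(X_{n,i})$ using a.s.\ uniform consistency of $\hat{r}_{n,b}$ over $U(x,h)$, controllable by the covering condition \ref{C:Covering}, the bandwidth rates \ref{C:Bandwidth} (in particular $h/b\to 0$), and the exponential inequalities already used in Theorem~\ref{Thrm:GaussianLimit}. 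Next, condition on $\cF_{n,i-1}$: hypothesis \ref{C:Response}\ref{C:Response2} combined with the small-ball expansion \ref{C:SmallBall}\ref{C:SmallBall3} and the definition of $M_2$ yields
\begin{equation*}
\E{\scalar{\epsilon_{n,i}}{v}^2\Delta_{n,h,i}(x)^2\mid\cF_{n,i-1}}=\phi(h)\,M_2\,f_{n,i,1}(x)\sum_{j,k}\gamma_j\gamma_k W_{2,j,k}(x)+\oas(\phi(h)),
\end{equation*}
and averaging in $i$ via the ergodicity \ref{C:SmallBall}\ref{C:SmallBall4} gives the a.s.\ limit $M_2 f_1(x)\sum_{j,k}\gamma_j\gamma_k W_{2,j,k}(x)$, matching $\sigma^2_{v,x}$ after the $M_1^2 f_1(x)^2$ division. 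The martingale remainder $\sum_i[\scalar{\epsilon_{n,i}}{v}^2\Delta_{n,h,i}(x)^2-\E{\cdot\mid\cF_{n,i-1}}]$ is absorbed by a Bernstein-type inequality leveraging \ref{C:Response}\ref{C:Response3}. The conditional Lindeberg condition is a Lyapunov $(2+\delta')$-moment estimate from $\E{|V_{n,i}|^{2+\delta'}}<\infty$ and \ref{C:Response}\ref{C:Response3}, and tightness in $\cH$ reduces to $\limsup_n\sum_{k>K}\operatorname{Var}^*(\scalar{S^*_n}{e_k})\to0$ a.s.\ as $K\to\infty$, supplied by the exponential tail $\sum_{k>m}W_{2,k,k}(y)\le a_0 e^{-a_1 m}$ in \ref{C:Response}\ref{C:Response2}.

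For the naive bootstrap under \ref{C:NaiveBS} (homoscedastic case), the $\beps_{n,i}$ are i.i.d.\ given $\cS_n$; then $\operatorname{Var}^*(\scalar{S^*_n}{v})$ factors, up to $\oas(1)$, as $(M_1^2 f_1(x)^2)^{-1}$ times $\sum_{j,k}\gamma_j\gamma_k\,[n^{-1}\sum_i\scalar{\hat{\epsilon}_{n,i}}{e_j}\scalar{\hat{\epsilon}_{n,i}}{e_k}]\cdot[(n\phi(h))^{-1}\sum_i\Delta_{n,h,i}(x)^2]$. The first bracket converges a.s.\ to $\sum_{j,k}\gamma_j\gamma_k W_{2,j,k}$ by \ref{C:NaiveBS}, the second to $M_2 f_1(x)$ by the same ergodic + martingale argument as above, and homoscedasticity $W_{2,j,k}(y)\equiv W_{2,j,k}$ matches the limit to $\sigma^2_{v,x}$. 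The hardest part, and what I expect to be the main obstacle, is the passage from \emph{in-probability} to \emph{almost sure} convergence of the conditional variance: this demands the full strength of the ergodicity \ref{C:SmallBall}\ref{C:SmallBall4}, sharp Bernstein-type inequalities for $\cH$-valued martingale differences via \ref{C:Response}\ref{C:Response3}, and an a.s.\ uniform approximation $\hat{\epsilon}_{n,i}\approx\epsilon_{n,i}$ on $U(x,h)$ relying critically on the covering geometry \ref{C:Covering} and the oversmoothing condition $h/b\to 0$ from \ref{C:Bandwidth}.
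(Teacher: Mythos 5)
Your proposal is correct and follows essentially the same route as the paper: Lemma~\ref{L:ConvergenceBias} for $B^*_n(x)-B_n(x)=\oas((n\phi(h))^{-1/2})$, a conditional Lindeberg-type CLT in $\cH$ for the bootstrap martingale term (the paper's Lemma~\ref{L:DistributionBS}), with the conditional variance identified via uniform consistency of $\hat{r}_{n,b}$ near $x$, hypothesis \ref{C:Response}, and the tail bound $\sum_{k>m}W_{2,k,k}\le a_0e^{-a_1m}$, and for the naive case the factorization through \ref{C:NaiveBS}. The only step you leave implicit is that the common centering $\sqrt{n\phi(h)}\,B_n(x)$ is random, bounded but not convergent, so the paper replaces it by a deterministic bounded sequence $c_n$ and invokes Rao's uniform convergence theorem over the equicontinuous class $\{\Psi(\cdot-c_n):n\in\N\}$ --- which is exactly the uniformity your appeal to the bounded-Lipschitz metric is meant to supply.
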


If we impose another restriction on the bandwidth, we obtain a more familiar result.
	\begin{itemize}\setlength\itemsep{0em}
		\item [\mylabel{C:ConvBW}{(A8)}]
$\lim_{n\rightarrow\infty}h (n\phi(h))^{1/2} = \bar{c} \in \R_+$. Set
$
	\bar{B}(x) = \bar{c} M_0 \, M_1^{-1} \{\sum_{k\in\N} \psi_{k,x}'(0) e_k\} \in \cH$.
\end{itemize} 

\begin{corollary}\label{CorBS}
Assume that \ref{C:Kernel} -- \ref{C:Covering} and \ref{C:ConvBW} are satisfied. Then for the wild bootstrap $\mu_{n,x} \Rightarrow \fG(\bar{B}(x),\cC_x )$ and $\mu_{n,x}^* \Rightarrow \fG(\bar{B}(x),\cC_x )$ $a.s.$ In particular, for each $v\in\cH$
	\begin{align*}
		\lim_{n\rightarrow\infty}	\sup_{z\in\R} \left| \p^*\left( \sqrt{n \phi(h) } \scalar{\bootr(x) - \hat{r}_{n,b}(x) } { v} \le z \right) - \p\left( \sqrt{n \phi(h) } \scalar{\hat{r}_{n,h}(x) - r(x) }{v} \le z \right) \right| = 0 \quad a.s.
	\end{align*}
	If additionally \ref{C:NaiveBS} is satisfied, then the same statements are true for the naive bootstrap procedure.
\end{corollary}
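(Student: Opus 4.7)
The strategy is to combine Theorems~\ref{Thrm:GaussianLimit} and~\ref{Thrm:GaussianLimitBootstrap} with a Slutsky-type argument once \ref{C:ConvBW} pins down the almost-sure limit of the conditional bias $B_n(x)$ in the Hilbert norm, and then to specialize to the one-dimensional laws via P\'olya's theorem.

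\textbf{Limit of the bias.} First I would show that $\sqrt{n\phi(h)}\, B_n(x) \to \bar{B}(x)$ almost surely in $\cH$. Using \ref{C:Response}~\ref{C:Response1} inside the numerator of \eqref{Eq:DefCondEstimator}, the conditional bias may be rewritten as
\[
B_n(x) = \frac{1}{n\,\E{\Delta_{1,h,1}(x)}\,\bar{r}_{n,h,1}(x)} \sum_{i=1}^n \E{(r(X_{n,i})-r(x))\, \Delta_{n,h,i}(x) \,|\, \cF_{n,i-1}}.
\]
Projecting onto $e_k$, conditioning on $d(x,X_{n,i})=s$ and invoking \ref{C:Regressor}~\ref{C:Regressor1} gives an integrand of the form $K(s/h)\,[\psi_{k,x}(s) + \bar g_{k,n,i,x}(s)]\, \intd{F_x^{\cF_{n,i-1}}(s)}$. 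Integration by parts in $K$, the change of variables $s=hu$, \ref{C:SmallBall}~\ref{C:SmallBall5} and the a.s.\ averaging statements \ref{C:SmallBall}~\ref{C:SmallBall3}--\ref{C:SmallBall4} produce the $k$-th coordinate $h\,\psi'_{k,x}(0)\, M_0/M_1 + \oas(h)$; the summability $\sum_k \psi'_{k,x}(0)^2 + L_k^2 < \infty$ from \ref{C:Regressor}~\ref{C:Regressor1} then upgrades the coordinatewise statement to convergence in the $\cH$-norm. Multiplying by $\sqrt{n\phi(h)}$ and invoking \ref{C:ConvBW} yields $\bar{B}(x)$. This essentially reproduces the bias computation already present in the proof of Theorem~\ref{Thrm:GaussianLimit}; the only new point is to extract an a.s.\ limit rather than a negligible remainder.

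\textbf{Weak limits.} Theorem~\ref{Thrm:GaussianLimit} provides $\sqrt{n\phi(h)}(\hat{r}_{n,h}(x) - r(x) - B_n(x)) \Rightarrow \fG(0,\cC_x)$. Combined with the bias limit via the Hilbert-space Slutsky lemma, this yields $\mu_{n,x} \Rightarrow \fG(\bar{B}(x),\cC_x)$. For the bootstrap, Theorem~\ref{Thrm:GaussianLimitBootstrap} states that for every bounded Lipschitz $\Psi\colon\cH\to\R$, $\int \Psi \intd{\mu^*_{n,x}} - \int \Psi \intd{\mu_{n,x}} \to 0$ almost surely. Choosing a countable family of bounded Lipschitz functions that determines weak convergence on the separable Hilbert space $\cH$ and intersecting the corresponding probability-one events upgrades this to $\mu^*_{n,x} \Rightarrow \fG(\bar{B}(x),\cC_x)$ almost surely.

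\textbf{Kolmogorov statement and main obstacle.} Fix $v\in\cH$. Since $y\mapsto\scalar{y}{v}$ is continuous on $\cH$, the continuous mapping theorem transports the preceding weak convergences to the one-dimensional laws, whose limit is the real Gaussian with mean $\scalar{\bar{B}(x)}{v}$ and variance $\scalar{\cC_x v}{v}$. This limit has a continuous distribution function, so P\'olya's theorem turns pointwise convergence of cdfs into uniform convergence in $z$, both for $\p^*(\sqrt{n\phi(h)}\,\scalar{\bootr(x)-\hat{r}_{n,b}(x)}{v} \le \cdot\,)$ (a.s.) and for its deterministic analogue; a triangle inequality then gives the claim. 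In the homoscedastic case, the identical argument applies using the naive-bootstrap conclusion of Theorem~\ref{Thrm:GaussianLimitBootstrap}, which requires \ref{C:NaiveBS}. The hardest part is the Hilbert-norm a.s.\ convergence of the bias: one must propagate the coordinatewise asymptotics to the $\cH$-norm uniformly in $k$, using the geometric decay $\sum_{k>m}|\psi'_{2,k,x}(0)| + L_{2,k} \le a_0 e^{-a_1 m}$ in \ref{C:Regressor}~\ref{C:Regressor2} together with the a.s.\ averaging statements in \ref{C:SmallBall}.
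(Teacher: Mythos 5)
Your proposal is correct and follows essentially the same route as the paper: Slutsky's theorem combined with Theorems~\ref{Thrm:GaussianLimit} and~\ref{Thrm:GaussianLimitBootstrap}, then P\'olya's theorem for the uniform convergence of the one-dimensional distribution functions. The bias expansion you identify as the ``hardest part'' is exactly the first statement of Lemma~\ref{L:LL3}, namely $B_n(x) = (M_0/M_1)\big(\sum_{k\in\N}\psi'_{k,x}(0)e_k\big)h + \oas(h)$ in $\cH$, so the paper simply cites that lemma together with \ref{C:ConvBW} rather than re-deriving it.
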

The statement concerning the one-dimensional projections is similar to the result in \cite{ferraty_validity_2010} who study the bootstrap for a real-valued response $Y$ and i.i.d.\ data.

	We conclude with a remark on the normalization factor $(n\phi(h))^{1/2}$. Define $\hat{F}_{n,x}(h)$ as the empirical version of $F_x(h)$ by $n^{-1} \sum_{i=1}^n \1{ d(x,X_{n,i})\le h}$. Then the above results remain valid if we replace $(n \phi(h))^{1/2}$ by $(n \hat{F}_{n,x}(h))^{1/2}$ and omit the factor $f_1(x)$ in the definition of the covariance operator $\cC_x$ in \eqref{Eq:GaussianLimit2}. Indeed, $F_x(h)/\phi(h) = f_1(x) + o(1)$ and if $\hat{F}_{n,x}(h)/F_x(h)\rightarrow 1$ as $n\rightarrow\infty$, the claim follows from Slutzky's theorem, also compare \cite{ferraty_validity_2010} and \cite{laib2010nonparametric}.

	\section{Technical results}\label{TechnicalResults}
	In order to derive the results, we need more notation. We split the difference $\hat{r}_{n,h}(x) - C_n(x)$ in a main term and a remainder, for this reason define
	\begin{align*}
				&Q_n(x) \coloneqq	(\hat{r}_{n,h,2}(x) - \bar{r}_{n,h,2}(x) ) - r(x)( \hat{r}_{n,h,1}(x) - \bar{r}_{n,h,1}(x) ) \text{ and }\\
				&R_n(x)\coloneqq - B_n(x)( \hat{r}_{n,h,1}(x) - \bar{r}_{n,h,1}(x) ).
	\end{align*}
	Then
	\begin{align}\label{Eq:DefCondBias2}
				\hat{r}_{n,h}(x) - C_n(x) = \{ Q_n(x)+R_n(x) \} / \hat{r}_{n,h,1}(x) .
	\end{align}
	It follow several technical results which are necessary for the main result.

\subsection*{General results}
A variant of the next lemma is also given in \cite{laib2010nonparametric}.
	\begin{lemma}\label{L:LL1}
	Assume that \ref{C:Kernel} and \ref{C:SmallBall} \ref{C:SmallBall2}, \ref{C:SmallBall3} and \ref{C:SmallBall5} are satisfied. Let $j\ge 1$. Then for $x\in\cE$
	\begin{enumerate}\setlength\itemsep{0em}

		\item [(i)] $\phi(h)^{-1} \, \E{ \Delta^j_{n,h,i}(x) | \cF_{n,i-1} } = M_j f_{n,i,1}(x) + \Oas\left( g_{n,i,x}(h) \phi(h) ^{-1}  \right)$. 
		
		\item [(ii)] $\phi(h)^{-1} \, \E{ \Delta^j_{1,h,1}(x) } = M_j f_{1}(x) + o(1)$.
		
		\item [(iii)] $\phi(h)^{-1} \, \E{  d(x,X_{n,i}) h^{-1} \Delta_{n,h,i}(x) \Big| \cF_{n,i-1} } = M_0 f_{n,i,1}(x) +\Oas\left( g_{n,i,x}(h) \, \phi(h)^{-1 }  \right)$.
	\end{enumerate}
				Additionally assume \ref{C:SmallBall} \ref{C:SmallBall1}, let $\ell\in\{0,1\}$ and $\veps>0$, then  
	\begin{enumerate}\setlength\itemsep{0em}
			\item [(iv)] $\sup_{y\in U(x,\veps)} (n\phi(h))^{-1} \left| \sum_{i=1}^n \E{ (d(y,X_{n,i}) h^{-1})^\ell \Delta_{n,h,i}(y) - (d(x,X_{n,i}) h^{-1})^\ell \Delta_{n,h,i}(x) \Big| \cF_{n,i-1} } \right| = \Oas(\veps^\alpha)$.
	\end{enumerate}
	\end{lemma}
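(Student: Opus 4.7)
The plan is to treat parts (i)--(iii) with a common integration-by-parts identity applied to the appropriate (conditional) distribution function of $d(x,X_{n,i})$, and then to leverage the H\"older-type regularity in the covariate from \ref{C:SmallBall1} for the uniform bound in (iv).

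Since $K$ has support $[0,1]$ and is $C^1$, for any $C^1$ function $K^*$ and any distribution function $G$ vanishing at $0$, the change of variables $u=t/h$ together with Stieltjes integration by parts yields
\begin{align*}
\int_{[0,1]} K^*(u)\, \intd{G(hu)} = K^*(1)\, G(h) - \int_0^1 (K^*)'(u)\, G(hu)\intdd{u}.
\end{align*}
I apply this with $K^*(u)=K^j(u)$ for (i) and (ii), and $K^*(u)=uK(u)$ for (iii), using $G=F_x^{\cF_{n,i-1}}$ for (i) and (iii), and $G=F_x$ for (ii). Substituting the small-ball expansions from \ref{C:SmallBall3} (respectively \ref{C:SmallBall2}) and dividing by $\phi(h)$ produces
\begin{align*}
\phi(h)^{-1}\E{K^*(d(x,X_{n,i})/h) | \cF_{n,i-1}} = f_{n,i,1}(x)\left[K^*(1) - \int_0^1 (K^*)'(u)\,\phi(hu)/\phi(h)\intdd{u}\right] + \mathrm{Rem},
\end{align*}
with an analogous unconditional identity for (ii). By \ref{C:SmallBall5} and bounded convergence on $[0,1]$ the bracket tends to $M_j$ or $M_0$ as appropriate; and by the essential-sup bound $\|g_{n,i,y}(u)\|_{\p,\infty}\le\tilde{L}_{n,i}\phi(u)$ for small $u$ from \ref{C:SmallBall3}, the remainder is $\Oas(g_{n,i,x}(h)/\phi(h))$.

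For part (iv), I apply the same identity twice, at $y$ and at $x$, and subtract:
\begin{align*}
&\E{(d(y,X_{n,i})/h)^\ell \Delta_{n,h,i}(y) - (d(x,X_{n,i})/h)^\ell \Delta_{n,h,i}(x) | \cF_{n,i-1}} \\
&\qquad = K^*(1)\left[F_y^{\cF_{n,i-1}}(h) - F_x^{\cF_{n,i-1}}(h)\right] - \int_0^1 (K^*)'(u)\left[F_y^{\cF_{n,i-1}}(hu) - F_x^{\cF_{n,i-1}}(hu)\right]\intdd{u}.
\end{align*}
The first clause of \ref{C:SmallBall1} bounds each difference by $\tilde{L}_{n,i}\, d(y,x)^\alpha\, F_x^{\cF_{n,i-1}}(hu)$, and combining with the a.s.\ estimate $F_x^{\cF_{n,i-1}}(hu)\le 2\tilde{L}_{n,i}\phi(hu)$ (from \ref{C:SmallBall3}) and the uniform boundedness of $\phi(hu)/\phi(h)$ on $[0,1]$ (from \ref{C:SmallBall5}) gives, for $y\in U(x,\veps)$, a bound of the form $C\,\tilde{L}_{n,i}^2\,\veps^\alpha\,\phi(h)$ that is independent of $y$. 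Pushing the supremum inside the sum and dividing by $n\phi(h)$ yields $C\,\veps^\alpha\,n^{-1}\sum_{i=1}^n\tilde{L}_{n,i}^2=\Oas(\veps^\alpha)$ by the final clause of \ref{C:SmallBall1}.

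The main obstacle is keeping the $g_{n,i,x}$ remainders controlled \emph{uniformly} in $u\in[0,1]$ after the integration-by-parts step (not only at the boundary $u=1$) and in the triangular-array sense across $i$; the essential-sup clause in \ref{C:SmallBall3} is precisely what is needed. In (iv), the secondary concern that $\sup_y$ and $\sum_i$ do not commute is resolved by the $y$-free envelope produced above.
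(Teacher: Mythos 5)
Your proof is correct and follows essentially the same route as the paper: parts (i)--(iii) are the standard Stieltjes integration-by-parts argument of Laib and Louani (which the paper simply cites), and for (iv) you use exactly the paper's identity $\E{ (d(x,X_{n,i}) h^{-1})^\ell \Delta_{n,h,i}(x) \mid \cF_{n,i-1} } = K(1) F^{\cF_{n,i-1}}_x(h) - \int_0^1 (K(s)s^\ell)' F^{\cF_{n,i-1}}_x(hs) \intd{s}$ combined with the H\"older bound from \ref{C:SmallBall}~\ref{C:SmallBall1} and the a.s.\ envelope $F^{\cF_{n,i-1}}_x \le 2\tilde{L}_{n,i}\phi$, arriving at the same $y$-free bound $C\,\veps^\alpha\, n^{-1}\sum_{i=1}^n \tilde{L}_{n,i}^2$.
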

	\begin{proof}
	The statements (i) to (ii)  can be found in \cite{laib2010nonparametric}. The statement (iii) follows with the same reasoning. We conclude with the proof of (iv), where we use that
\[
	\E{ (d(x,X_{n,i}) h^{-1})^\ell \Delta_{n,h,i}(x) | \cF_{n,i-1} } = K(1) F^{\cF_{n,i-1}}_x(h) - \int_0^1 (K(s)s^\ell)' F^{\cF_{n,i-1}}_x(hs) \intd{s}.
	\]
Moreover, $|F^{\cF_{n,i-1}}_y(hs)- F^{\cF_{n,i-1}}_x(hs)| / F^{\cF_{n,i-1}}_x(h) \le \tilde{L}_{n,i} \veps^\alpha$ for all $y\in U(x,\veps)$ and $s\in [0,1]$ by \ref{C:SmallBall}~\ref{C:SmallBall1}. Furthermore, $F^{\cF_{n,i-1}}_x(h) \phi(h)^{-1} \le 2 \tilde{L}_{n,i}$ also by \ref{C:SmallBall}. Combining these results, we obtain
\begin{align*}
				&\sup_{y\in U(x,\veps)} (n\phi(h))^{-1} \left| \sum_{i=1}^n \E{ (d(y,X_{n,i}) h^{-1})^\ell \Delta_{n,h,i}(y) - (d(x,X_{n,i}) h^{-1})^\ell \Delta_{n,h,i}(x) \Big| \cF_{n,i-1} } \right| \\
				&\le 2\Big( K(1) + \int_0^1 |(K(s) s^\ell)'| \intd{s} \Big) \, \veps^\alpha\, \Big( n^{-1} \sum_{i=1}^n \tilde{L}_{n,i}^2\Big).
\end{align*}
	\end{proof}

	We give an exponential inequality for a sequence of real-valued martingale differences \eqref{Eq:ConvergenceR1}, this inequality can also be found in \cite{de1999decoupling}. Similar but more general results for independent data are given in \cite{yurinskiui1976exponential}. 
	\begin{lemma}\label{ExpIneqMDS}
	Let $(Z_i:i=1,\ldots,n)$ be a martingale difference sequence of real-valued random variables adapted to the filtration $(\cF_i:i=1,\ldots,n)$. Assume that $\E{ |Z_i|^m | \cF_{i-1} } \le \frac{m!}{2} (a_i)^2 b^{m-2}$  $a.s.$ for some $b\ge 0$ and $a_i>0$ for $i=1,\ldots,n$. Then
	\[
			\p\Big(\Big|\sum_{i=1}^n Z_i\Big| \ge t \Big) \le 2 \exp\left(	- \frac{1}{2} \frac{t^2}{ \sum_{i=1}^n (a_i)^2 + bt }	\right).
	\]
	In particular, if $b=a^2$ and $a_i=a$ for $i=1,\ldots,n$, then $\p(|\sum_{i=1}^n Z_i| > n t) \le 2 \exp( 2^{-1} n t^2 (a^2(1+t))^{-1} )$.
	\end{lemma}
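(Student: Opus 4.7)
The plan is to establish this Bernstein-type bound via the standard exponential-martingale method. First I would treat the one-sided tail $\p(\sum_{i=1}^n Z_i \ge t)$; the two-sided version then follows by applying the one-sided version to $(-Z_i)_{i=1,\ldots,n}$, which is also an MDS satisfying the same moment hypothesis, and adding the two bounds to produce the factor of $2$.

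For the one-sided tail, fix $\lambda \in (0, 1/b)$ and apply Markov's inequality to $\exp(\lambda \sum_i Z_i)$. The crucial step is a conditional cumulant-generating-function bound: expanding the exponential and using the MDS property $\E{Z_i \mid \cF_{i-1}} = 0$ together with the moment hypothesis,
\begin{align*}
\E{e^{\lambda Z_i} \mid \cF_{i-1}} &\le 1 + \sum_{m=2}^\infty \frac{\lambda^m}{m!}\,\E{|Z_i|^m \mid \cF_{i-1}} \\
&\le 1 + \frac{a_i^2 \lambda^2}{2}\sum_{m=2}^\infty (\lambda b)^{m-2} \;=\; 1 + \frac{a_i^2 \lambda^2/2}{1-\lambda b} \;\le\; \exp\!\left(\frac{a_i^2 \lambda^2/2}{1-\lambda b}\right).
\end{align*}
Iterating this bound by conditioning successively on $\cF_{n-1},\cF_{n-2},\ldots,\cF_0$ yields
\[
\E{\exp\!\Big(\lambda \sum_{i=1}^n Z_i\Big)} \le \exp\!\left(\frac{\lambda^2 \sum_{i=1}^n a_i^2 / 2}{1-\lambda b}\right),
\]
and therefore $\p(\sum_{i=1}^n Z_i \ge t) \le \exp\bigl(-\lambda t + \tfrac{\lambda^2 \sum_i a_i^2/2}{1-\lambda b}\bigr)$.

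The final step is to optimize over $\lambda \in (0,1/b)$. Taking the choice $\lambda = t/\bigl(\sum_{i=1}^n a_i^2 + bt\bigr)$ (which indeed lies in $(0,1/b)$) and substituting gives, after a short computation,
\[
-\lambda t + \frac{\lambda^2 \sum_{i=1}^n a_i^2/2}{1-\lambda b} \;=\; -\frac{1}{2}\,\frac{t^2}{\sum_{i=1}^n a_i^2 + bt},
\]
which is the claimed exponent. The specialization with $b = a^2$, $a_i = a$ is then immediate by plugging $t \leadsto nt$ into the resulting bound.

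I do not expect a major obstacle: the computation is standard, and the MDS hypothesis is exactly what is needed to kill the $m=1$ term in the cumulant expansion so that the quadratic $a_i^2\lambda^2$ dominates. The only mild care needed is to verify that $\lambda = t/(\sum a_i^2 + bt) < 1/b$ so that the geometric series converges, which holds trivially since $t < (\sum a_i^2)/b + t$.
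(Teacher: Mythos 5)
Your proposal is correct and follows essentially the same route as the paper's own proof: bound the conditional Laplace transform $\E{e^{\lambda Z_i}\mid\cF_{i-1}}$ via the moment hypothesis and a geometric series, apply the Chernoff/Markov argument, and optimize at $\lambda = t/(\sum_i a_i^2 + bt)$. The only differences are cosmetic (you spell out the tower-property iteration and the reflection $Z_i\mapsto -Z_i$ for the two-sided bound, which the paper leaves implicit), so there is nothing further to add.
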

	\begin{proof}
	We compute the Laplace transform of $\gamma Z_i$ conditional on $\cF_{i-1}$ for $0<\gamma<b^{-1}$ and $i=1,\ldots,n$.
	\begin{align}
			\E{ \exp(\gamma Z_i) |\cF_{i-1}} &\le 1 + \sum_{m=2}^{\infty} \frac{\gamma^m}{m!} \E{ | Z_i|^m |\cF_{i-1}} \le 1 + \frac{\gamma^2 (a_i)^2 }{2} \sum_{m=2}^{\infty} (\gamma b)^{m-2} \le \exp\left( \frac{\gamma^2 (a_i)^2 }{2} \frac{1}{1-\gamma b} \right). \nonumber
	\end{align}
	Thus, 
	\[
			\p(|\sum_{i=1}^n Z_i| > t) \le 2 \exp(-\gamma t ) \exp\left( \frac{\gamma^2 \sum_{i=1}^n (a_i)^2 }{2} \frac{1}{1-\gamma b} \right).
	\]
	Setting $\gamma_0 \coloneqq t (\sum_{i=1}^n (a_i)^2 + b t)^{-1} < b^{-1}$ yields the conclusion.
	\end{proof}

The next lemma is fundamental as it studies the behavior of $(n\phi(h))^{-1} \sum_{i=1}^n Y_{v,n,i}^\ell \Delta^j_{h,n,i}(y) $ for $\ell\in\{0,1\}$ and $j\ge 1$. Here $Y_{v,n,i} = \scalar{Y_{n,i}}{v}$ is the projection of $Y_{n,i}$ in direction $v\in\cH$.
	\begin{lemma}\label{L:ConvergenceR1}
	Assume \ref{C:Kernel}, \ref{C:SmallBall} \ref{C:SmallBall2} -- \ref{C:SmallBall5} and \ref{C:Response}. Then for each $y\in U(x,\delta)$, $v\in\cH$, $j\ge 1$ and $\ell\in\{0,1\}$
	\begin{align}\begin{split}\label{Eq:ConvergenceR0}
								&\p\left(	(n\phi(h))^{-1} \left|\sum_{i=1}^n Y_{v,n,i}^\ell \Delta^j_{h,n,i}(y) - \E{ Y_{v,n,i}^\ell \Delta^j_{h,n,i}(y) \Big | \cF_{n,i-1}} \right|	> t\right) \\
								&\le 2 \exp\left( - \frac{1}{2}	\frac{t^2 n \phi(h) }{ 16 \tilde{L} K(0)^{2j}  + 2K(0)^j \tilde{H}^\ell t}	\right).
	\end{split}\end{align}
	Consequently,	$(n\phi(h))^{-1} \left|\sum_{i=1}^n Y_{v,n,i}^\ell \Delta^j_{n,h,i}(y) - \E{ Y_{v,n,i}^\ell \Delta^j_{n,h,i}(y) \Big | \cF_{n,i-1}} \right| = \Oas\left( (n \phi(h) )^{-1/2 } (\log n)^{1/2}\right)$.
	So,
	\begin{align}\label{Eq:ConvergenceR0b}
			(n\phi(h))^{-1} \sum_{i=1}^n Y_{v,n,i}^\ell \Delta_{n,h,i}^j (x) = r_v(x)^\ell M_j f_1(x) + \oas(1).
				\end{align}
	Moreover, $\lim_{n\rightarrow \infty} \hat{r}_{n,h,1}(x) = 1$ $a.s.$ and $\lim_{n\rightarrow \infty} \bar{r}_{n,h,1}(x) = 1$ $a.s.$
	\end{lemma}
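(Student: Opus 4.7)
The plan is to apply the Bernstein-type inequality of Lemma~\ref{ExpIneqMDS} to the centred sum and then extract the a.s.\ rate and the mean-law from Lemma~\ref{L:LL1}. Set
\[
Z_{n,i} \coloneqq (n\phi(h))^{-1}\Bigl[\,Y_{v,n,i}^{\ell}\Delta^{j}_{n,h,i}(y) - \E{Y_{v,n,i}^{\ell}\Delta^{j}_{n,h,i}(y) \,\big|\, \cF_{n,i-1}}\Bigr].
\]
Since $Y_{v,n,i}^{\ell}\Delta^{j}_{n,h,i}(y)$ is $\cF_{n,i}$-measurable, the sequence $(Z_{n,i})$ is a real-valued martingale difference array with respect to $(\cF_{n,i})$. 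The first task is to verify the Bernstein condition $\E{|Z_{n,i}|^{m}\mid\cF_{n,i-1}}\le \tfrac{m!}{2}(a_i)^{2}b^{m-2}$. Using the conditional Jensen bound $\E{|U-\E{U|\cF}|^{m}|\cF}\le 2^{m}\E{|U|^{m}|\cF}$, the boundedness $\Delta_{n,h,i}(y)\le K(0)$, tower with $\cG_{n,i-1}$, and assumption~\ref{C:Response3} to control $\E{\|Y_{n,i}\|^{m}|X_{n,i},\cF_{n,i-1}}\le m!\,\tilde H^{m-2}$, one obtains
\[
\E{|Z_{n,i}|^{m}\mid\cF_{n,i-1}}\ \le\ \frac{m!}{2}\,(a_i)^{2}\,b^{m-2},\qquad b=\frac{2K(0)^{j}\tilde H^{\ell}}{n\phi(h)},
\]
with $\sum_{i=1}^{n}(a_i)^{2}\le 16\tilde L K(0)^{2j}/(n\phi(h))$ after invoking Lemma~\ref{L:LL1}(i) to bound $\E{\Delta^{j}_{n,h,i}(y)\mid\cF_{n,i-1}}\le c\,\phi(h)\,\tilde L_{n,i}$ on $U(x,\delta)$ and averaging with \ref{C:SmallBall}\ref{C:SmallBall1}. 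Plugging into Lemma~\ref{ExpIneqMDS} yields the stated exponential inequality~\eqref{Eq:ConvergenceR0}.

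The a.s.\ rate then follows by a standard Borel--Cantelli argument: with $t=t_{n}=C\sqrt{(\log n)/(n\phi(h))}$ (which tends to $0$ by~\ref{C:Bandwidth}), the denominator $16\tilde L K(0)^{2j}+2K(0)^{j}\tilde H^{\ell}t_{n}$ is eventually bounded by a constant, so the tail becomes $2n^{-C^{2}/c'}$ for some $c'>0$. Picking $C$ large makes this summable, giving
\[
(n\phi(h))^{-1}\Bigl|\sum_{i=1}^{n}Y_{v,n,i}^{\ell}\Delta^{j}_{n,h,i}(y)-\E{Y_{v,n,i}^{\ell}\Delta^{j}_{n,h,i}(y)\mid\cF_{n,i-1}}\Bigr|=\Oas\bigl((n\phi(h))^{-1/2}(\log n)^{1/2}\bigr).
\]

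For~\eqref{Eq:ConvergenceR0b}, it remains to evaluate the conditional-expectation term at $y=x$. For $\ell=0$, Lemma~\ref{L:LL1}(i) gives $(n\phi(h))^{-1}\sum_{i}\E{\Delta^{j}_{n,h,i}(x)\mid\cF_{n,i-1}}=M_{j}\,n^{-1}\sum_{i}f_{n,i,1}(x)+o_{a.s.}(1)$ by~\ref{C:SmallBall}\ref{C:SmallBall3}, and this converges to $M_{j}f_{1}(x)$ a.s.\ by~\ref{C:SmallBall}\ref{C:SmallBall4}. For $\ell=1$, use~\ref{C:Response}\ref{C:Response1} to pull out $\E{Y_{v,n,i}\mid\cG_{n,i-1}}=r_{v}(X_{n,i})$, split $r_{v}(X_{n,i})=r_{v}(x)+(r_{v}(X_{n,i})-r_{v}(x))$, and absorb the remainder via~\ref{C:Regressor}\ref{C:Regressor1} (the difference vanishes on $\{d(x,X_{n,i})\le h\}$, which is the support of $\Delta^{j}_{n,h,i}(x)$, up to an $\oas(1)$ factor after normalization). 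Combining the two contributions yields $r_{v}(x)^{\ell}M_{j}f_{1}(x)+\oas(1)$.

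The last two assertions are then immediate: $\hat r_{n,h,1}(x)$ is $(n\phi(h))^{-1}\sum_{i}\Delta_{n,h,i}(x)$ divided by $(\phi(h))^{-1}\E{\Delta_{1,h,1}(x)}$, and both numerator and denominator converge to $M_{1}f_{1}(x)>0$ by the case $j=1,\ell=0$ of~\eqref{Eq:ConvergenceR0b} and Lemma~\ref{L:LL1}(ii); for $\bar r_{n,h,1}(x)$ the same decomposition into $M_{1}n^{-1}\sum_{i}f_{n,i,1}(x)+n^{-1}\sum_{i}\Oas(g_{n,i,x}(h)\phi(h)^{-1})$ used above gives the claim via~\ref{C:SmallBall}\ref{C:SmallBall3}--\ref{C:SmallBall4}. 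The main technical obstacle is the moment verification: one must track the interplay between the boundedness of $\Delta^{j}_{n,h,i}$, the Bernstein-type bound on $\|Y_{n,i}\|$ from~\ref{C:Response3}, and the size of $\E{\Delta^{j}_{n,h,i}(y)\mid\cF_{n,i-1}}$ from Lemma~\ref{L:LL1}(i), in order to obtain the explicit constants $16\tilde L K(0)^{2j}$ and $2K(0)^{j}\tilde H^{\ell}$ displayed in the inequality.
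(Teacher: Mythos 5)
Your proposal is correct and follows essentially the same route as the paper: the same martingale-difference decomposition, the Bernstein-type bound of Lemma~\ref{ExpIneqMDS} with the moments controlled via $\Delta_{n,h,i}\le K(0)$, \ref{C:SmallBall} and \ref{C:Response}~\ref{C:Response3}, a Borel--Cantelli (almost-complete convergence) step for the a.s.\ rate, and Lemma~\ref{L:LL1} together with \ref{C:Regressor}~\ref{C:Regressor1} to identify the limit $r_v(x)^\ell M_j f_1(x)$. No substantive differences worth noting.
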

	\begin{proof}
	First, consider $\ell=0$. We prove the statement concerning $\hat{r}_{n,h,1}$ and $\bar{r}_{n,h,1}$. Split $\hat{r}_{n,h,1}(x)$ in a two terms, viz.
	\begin{align}\label{Eq:ConvergenceR1}
			\hat{r}_{n,h,1}(x) &= \frac{\sum_{i=1}^n \E{\Delta_{n,h,i}(x) | \cF_{n,i-1}} }{ n \E{\Delta_{1,h,1}(x)}} + \frac{\sum_{i=1}^n \Delta_{n,h,i}(x) - \E{\Delta_{n,h,i}(x) | \cF_{n,i-1}} }{ n \E{\Delta_{1,h,1}(x)}}.
	\end{align}
	The denominator of both terms in \eqref{Eq:ConvergenceR1} can be written as $n\phi(h) (M_1 f_1(x) + o(1) )$, where $f_1(x)>0$, using that $n^{-1}\sum_{i=1}^n f_{n,i,1}(x) = f_1(x) + \oas(1)$. So that the first term converges to 1 $a.s.$ with the same arguments as in \cite{laib2010nonparametric}. We derive the exponential inequality for the second term in \eqref{Eq:ConvergenceR1} with Lemma~\ref{ExpIneqMDS} and show that it vanishes $a.s.$ 
	
	Let $y\in\cE$. Set $Z_{n,i} = \phi(h)^{-1} \left\{\Delta^j_{n,h,i}(y) - \E{ \Delta^j_{n,h,i}(y) | \cF_{n,i-1} }\right\}$. Then
	\begin{align}\label{Eq:ConvergenceR1_2}
			\E{ |Z_{n,i}|^m		\Big|\cF_{n,i-1} } \le 2^m \phi(h)^{-m} \E{ \big|\Delta^j_{n,h,i}(y)  \big| ^m		|\cF_{n,i-1} } \le 2\tilde{L}_{n,i}\phi(h)^{-(m-1)} |2K(0)^j|^m .
	\end{align}
	Thus, set $(a_i)^2 = 4 \tilde{L}_{n,i} |2K(0)^j|^2 \phi(h)^{-1}$ and $b = (2K(0)^j \phi(h)^{-1})^{m-2}$ and apply Lemma~\ref{ExpIneqMDS}. This establishes \eqref{Eq:ConvergenceR0} if $\ell=0$ and also yields the conclusion for the second term in \eqref{Eq:ConvergenceR1}.

	The statement concerning the rate of convergence follows then immediately from Proposition A.4 in \cite{ferraty2006nonparametric} which considers relations between $a.c.$ and $a.s.$ convergence. If $\ell=0$, \eqref{Eq:ConvergenceR0b} is then a simple consequence of Lemma~\ref{L:LL1}. 

	Second, let $\ell=1$. Then the statement in \eqref{Eq:ConvergenceR0} follows similarly if we use that $\E{ \norm{Y_{n,i}}^m | \cF_{n,i-1}, X_{n,i}=y} \le m! \tilde{H}^{m-2}$ uniformly in $y$ in a neighborhood of $x$. For statement in \eqref{Eq:ConvergenceR0b}, use additionally that 
	\[
		\E{ r(X_i)-r(x) | \cF_{n,i-1}, d(x,X_i)=s  } = \Oas(h) \text{ in } \cH,
		\]
		see also the proof of Lemma~\ref{L:LL3}. This finishes the proof.	
	\end{proof}

\begin{corollary}\label{Cor:ConvergenceR2}
Let \ref{C:Kernel}, \ref{C:SmallBall} \ref{C:SmallBall2} -- \ref{C:SmallBall5} and \ref{C:Response} be satisfied. Then for each $y\in U(x,\delta/2)$
\[
		(n\phi(h))^{-1} \norm{ \sum_{i=1}^n Y_{n,i} \Delta_{n,h,i}(y) - \E{ Y_{n,i} \Delta_{n,h,i}(y) | \cF_{n,i-1}} } = \Oas\left( (n \phi(h) )^{-1/2 } \, \log n \right).
\]
\end{corollary}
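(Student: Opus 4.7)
My plan is to upgrade the scalar concentration result of Lemma~\ref{L:ConvergenceR1} to a Hilbert-space norm bound by coordinate decomposition along the basis $\{e_k\}$, followed by a truncation at level $m_n \coloneqq \lceil c \log n \rceil$ with a suitably large $c$. Setting $Z_{n,i} \coloneqq Y_{n,i}\Delta_{n,h,i}(y) - \E{Y_{n,i}\Delta_{n,h,i}(y)\,|\,\cF_{n,i-1}}$ and $S_{n,k} \coloneqq \sum_{i=1}^n \scalar{Z_{n,i}}{e_k}$, Parseval gives
\[
     \Bigl\lVert \sum_{i=1}^n Z_{n,i} \Bigr\rVert^2 \;=\; \sum_{k \le m_n} S_{n,k}^2 \;+\; \sum_{k > m_n} S_{n,k}^2,
\]
and I will show each of the two pieces is $\Oas(n\phi(h)(\log n)^2)$.

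For the low-frequency sum I apply the exponential inequality \eqref{Eq:ConvergenceR0} with $v = e_k$ and $\ell = j = 1$ (note $\norm{e_k}=1$, so the constants $\tilde L,\tilde H$ are uniform in $k$). Choosing a constant $C$ large enough produces
\[
     \p\bigl(|S_{n,k}| > C (n\phi(h))^{1/2} (\log n)^{1/2}\bigr) \le 2 n^{-(1+\gamma)}
\]
for some $\gamma > 0$. Summing over $k \le m_n$ gives a bound $\lesssim (\log n)\, n^{-(1+\gamma)}$, which is summable, so Borel--Cantelli yields $\max_{k \le m_n} |S_{n,k}| = \Oas((n\phi(h))^{1/2}(\log n)^{1/2})$, and hence $\sum_{k \le m_n} S_{n,k}^2 = \Oas(n\phi(h)(\log n)^2)$.

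For the tail I pass to second moments. Using the martingale orthogonality $\E{S_{n,k}^2} = \sum_i \E{\scalar{Z_{n,i}}{e_k}^2} \le K(0)^2 \sum_i \E{\scalar{Y_{n,i}}{e_k}^2 \Delta_{n,h,i}(y)}$, conditioning on $X_{n,i}$ to obtain $\E{\scalar{Y_{n,i}}{e_k}^2 \mid X_{n,i}=z} = W_{2,k,k}(z) + r_{e_k}(z)^2$, and invoking the exponential tail bound from \ref{C:Response}\ref{C:Response2}, I get
\[
     \E{\sum_{k > m_n} S_{n,k}^2} \;\le\; C_1\, n\phi(h)\, e^{-a_1 m_n}.
\]
Choosing $c > 3/a_1$ makes this $O(n^{-2}\phi(h))$. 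Markov's inequality at the threshold $n\phi(h)(\log n)^2$ then yields a summable probability, and Borel--Cantelli closes the estimate: $\sum_{k > m_n} S_{n,k}^2 = \Oas(n\phi(h)(\log n)^2)$.

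Combining the two parts and taking square roots gives $\norm{\sum_i Z_{n,i}} = \Oas((n\phi(h))^{1/2}\log n)$, which is the claim after dividing by $n\phi(h)$. The main obstacle is balancing the truncation level: the union bound over the first $m_n$ coordinates forces $m_n$ to grow only logarithmically (otherwise the extra $m_n$ factor in $\sum_{k\le m_n} S_{n,k}^2$ destroys the rate), while the Markov bound on the tail needs $m_n$ large enough that $e^{-a_1 m_n}$ outruns the $n\phi(h)$ prefactor. It is precisely the exponential decay built into \ref{C:Response}\ref{C:Response2} that makes both constraints compatible with $m_n \asymp \log n$.
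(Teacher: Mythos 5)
Your proposal is correct and follows essentially the same route as the paper's own proof: the same Parseval decomposition with truncation at $m\asymp\log n$, the exponential inequality of Lemma~\ref{L:ConvergenceR1} plus a union bound over the first $m$ coordinates, and a second-moment/Markov argument combined with the exponential decay in \ref{C:Response}~\ref{C:Response2} for the tail. The only cosmetic difference is that you bound $\sum_{k\le m}S_{n,k}^2$ via $m\cdot\max_k S_{n,k}^2$ after an almost-sure coordinatewise bound, whereas the paper applies the union bound directly at the level of the probability $\p(\sum_{k\le m}(\cdots)^2>t^2)\le m\max_k\p(|\cdots|>t m^{-1/2})$; these are the same computation.
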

\begin{proof}
We have
\begin{align}
			&(n\phi(h))^{-2} \norm{ \sum_{i=1}^n Y_{n,i} \Delta_{n,h,i}(y) - \E{ Y_{n,i} \Delta_{n,h,i}(y) | \cF_{n,i-1}} }^2 \\
			\begin{split}\label{Eq:ConvergenceR21}
			&\le  (n\phi(h))^{-2} \sum_{k\le m} \left( \sum_{i=1}^n Y_{e_k,n,i} \Delta_{n,h,i}(y) - \E{ Y_{e_k,n,i} \Delta_{n,h,i}(y) | \cF_{n,i-1}} \right)^2 \\
			&\quad + (n\phi(h))^{-2} \sum_{k> m} \left( \sum_{i=1}^n Y_{e_k,n,i} \Delta_{n,h,i}(y) - \E{ Y_{e_k,n,i} \Delta_{n,h,i}(y) | \cF_{n,i-1}} \right)^2.
			\end{split}
\end{align}
We begin with the last summand in \eqref{Eq:ConvergenceR21} and obtain with assumption \ref{C:Response} \ref{C:Response2}
\begin{align*}
		&\p\left( (n\phi(h))^{-2} \sum_{k> m} \left( \sum_{i=1}^n Y_{e_k,n,i} \Delta_{n,h,i}(y) - \E{ Y_{e_k,n,i} \Delta_{n,h,i}(y) | \cF_{n,i-1}} \right)^2  > t^2 		\right) \\
			&\le (t n\phi(h))^{-2} \sum_{k> m} \E{ \left( \sum_{i=1}^n Y_{e_k,n,i} \Delta_{n,h,i}(y) - \E{ Y_{e_k,n,i} \Delta_{n,h,i}(y) | \cF_{n,i-1}} \right)^2 } \\
			&\le \frac{2}{ (n\phi(h) t )^2 } 	\E{ \sum_{i=1}^n \Delta_{n,h,i}^2(y) }  \left( \sup_{z\in U(x,\delta) }	\sum_{k>m} W_{2,k,k}(z) + r_{e_k}(z)^2 \right).
\end{align*}
Choose $m=c_1 \log n$ for $c_1$ sufficiently large and $t=c_2 (n \phi(h))^{-1/2} \log n$ for an appropriate constant $c_2$. Use the assumption of the decay of the last factor from \ref{C:Response}. Then for some $\tilde\delta>0$ this last line is $\cO( n^{-(1+\tilde\delta)} )$ and summable over $n\in\N$. In particular, the last summand in \eqref{Eq:ConvergenceR21} is $\Oas((n \phi(h))^{-1/2} \log n )$.

Next, consider the first summand in \eqref{Eq:ConvergenceR21}.
\begin{align*}
	&\p\left( (n\phi(h))^{-2} \sum_{k\le m} \left( \sum_{i=1}^n Y_{e_k,n,i} \Delta_{n,h,i}(y) - \E{ Y_{e_k,n,i} \Delta_{n,h,i}(y) | \cF_{n,i-1}} \right)^2  > t^2 		\right) \\
	&\le m \max_{k\le m} \p\left( (n\phi(h))^{-1} \left| \sum_{i=1}^n Y_{e_k,n,i} \Delta_{n,h,i}(y) - \E{ Y_{e_k,n,i} \Delta_{n,h,i}(y) | \cF_{n,i-1}} \right|  > t m^{-1/2} 		\right).
\end{align*}
An application of Lemma~\ref{L:ConvergenceR1} yields that also this last line is $\cO( n^{-(1+\tilde\delta)})$ and summable over $n\in\N$. Consequently, the first summand in \eqref{Eq:ConvergenceR21} is also $\Oas((n \phi(h))^{-1/2} \log n )$.
\end{proof}

\subsection*{Results on the estimates $\hat{r}_{n,h}$}
	\begin{lemma}\label{L:SupConvergenceR1}
	Assume that \ref{C:Kernel} and \ref{C:SmallBall} are satisfied. Then
	\begin{align*}
				&\sup_{y\in U(x,h) } (n\phi(b))^{-1} \sum_{i=1}^n |\Delta_{n,b,i}(y) - \Delta_{n,b,i}(x)| = \cO(h b^{-1} ) \quad a.s. \text{ and in the mean.}
	\end{align*}
	Moreover, $(n\phi(b))^{-1} \sum_{i=1}^n \E{ \sup_{y\in U(x,h) }|\Delta_{n,b,i}(y) - \Delta_{n,b,i}(x)| \big| \cF_{n,i-1} } = \cO(h b^{-1} )$ $a.s.$
	\end{lemma}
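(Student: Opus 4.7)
The plan is to reduce the whole statement to a single pointwise (in $i$) bound on $W_{n,i}\coloneqq \sup_{y\in U(x,h)}|\Delta_{n,b,i}(y)-\Delta_{n,b,i}(x)|$, then derive the ``moreover'' assertion by taking the conditional expectation, the ``in mean'' assertion by taking the full expectation, and the a.s.\ assertion by applying the MDS exponential inequality of Lemma~\ref{ExpIneqMDS} to $Z_{n,i}=W_{n,i}-\E{W_{n,i}|\cF_{n,i-1}}$. Note that $\sup_{y}(n\phi(b))^{-1}\sum_{i=1}^n|\Delta_{n,b,i}(y)-\Delta_{n,b,i}(x)| \le (n\phi(b))^{-1}\sum_{i=1}^n W_{n,i}$, so bounding the right-hand side is enough.

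For the pointwise estimate, write $u=d(x,X_{n,i})/b$ and, for $y\in U(x,h)$, $v=d(y,X_{n,i})/b$; the pseudometric inequality gives $|u-v|\le h/b$. Since $K$ is $C^1$ on $[0,1]$ while its zero-extension to $\R_+$ has a jump of size $K(1)>0$ at $1$, I would split into three cases (both $u,v\le 1$; both $>1$; one on each side). In the first case the mean value theorem on $[0,1]$ gives $|K(u)-K(v)|\le \|K'\|_\infty h/b$; in the second both kernels vanish; in the mixed case the single nonzero value satisfies $K(u\wedge v)\le K(1)+\|K'\|_\infty h/b$ because $u\wedge v>1-h/b$. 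Observing that the mixed case forces $d(x,X_{n,i})\in(b-h,b+h]$, one obtains the clean bound
\[
W_{n,i}\le \|K'\|_\infty\,(h/b)\,\mathds{1}\{d(x,X_{n,i})\le b+h\}+K(1)\,\mathds{1}\{b-h<d(x,X_{n,i})\le b+h\}.
\]

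Taking the conditional expectation given $\cF_{n,i-1}$, the two probabilities are controlled by the second half of \ref{C:SmallBall}~\ref{C:SmallBall1} applied with $u=b$, $\veps=h$ (admissible since $h/b\to 0$): each is bounded by a constant times $\tilde{L}_{n,i}(h/b)F_x^{\cF_{n,i-1}}(b)$. Combined with $F_x^{\cF_{n,i-1}}(b)\le 2\tilde{L}_{n,i}\phi(b)$ from \ref{C:SmallBall}~\ref{C:SmallBall3}, this gives $\phi(b)^{-1}\E{W_{n,i}|\cF_{n,i-1}}\le C(h/b)\tilde{L}_{n,i}(1+\tilde{L}_{n,i})$ a.s.\ for a kernel-dependent constant $C$. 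Summing over $i$, dividing by $n$, and invoking $\limsup_n n^{-1}\sum_i \tilde{L}_{n,i}^2<\infty$ (together with Cauchy--Schwarz for the linear term) proves the ``moreover'' bound. Since the $\tilde{L}_{n,i}$ are deterministic, taking the full expectation in the same chain also yields the ``in mean'' half of the main statement.

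For the a.s.\ bound I would apply Lemma~\ref{ExpIneqMDS} to the centered MDS $Z_{n,i}$. The pointwise estimate gives $|W_{n,i}|\le M:=\|K'\|_\infty(h/b)+K(1)=\cO(1)$, and $\E{W_{n,i}^2|\cF_{n,i-1}}\le M\,\E{W_{n,i}|\cF_{n,i-1}}$ puts the conditional moments of $Z_{n,i}$ in Bernstein form with $a_i^2\asymp (h/b)\phi(b)\tilde{L}_{n,i}^2$ and $b_0=2M$. Plugging $t=\eta\, n\phi(b)(h/b)$ into the inequality yields a tail bound of order $\exp(-c_\eta n\phi(b)(h/b))$. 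The bandwidth conditions \ref{C:Bandwidth}---specifically $b=o(h\sqrt{n\phi(h)})$ (so $(h/b)\sqrt{n\phi(h)}\to\infty$), $[\phi(h)/\phi(b)](\log n)^2=o(1)$ and $\sqrt{n\phi(h)}(\log n)^{-(2+\alpha)}\to\infty$---then imply $n\phi(b)(h/b)/\log n\to\infty$ super-polynomially, so Borel--Cantelli gives $(n\phi(b))^{-1}|\sum_i Z_{n,i}|=o(h/b)$ a.s. The single nontrivial point, and the main obstacle, is the jump of $K$ at the endpoint $1$ forced by $K(1)>0$; all of the extra care is aimed at controlling this jump through the small-ball annulus estimate in \ref{C:SmallBall}~\ref{C:SmallBall1}.
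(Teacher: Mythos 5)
Your proposal is correct and follows essentially the same route as the paper: the same pointwise decomposition of $|\Delta_{n,b,i}(y)-\Delta_{n,b,i}(x)|$ into a derivative term on the ball plus a jump term on the annulus $(b-h,b+h]$, and the same use of \ref{C:SmallBall}~\ref{C:SmallBall1} and \ref{C:SmallBall3} to bound the conditional expectations and hence the ``moreover'' and ``in the mean'' parts. The only cosmetic difference is the a.s.\ step, where you apply Lemma~\ref{ExpIneqMDS} directly to the centered envelope, whereas the paper invokes Lemma~\ref{L:ConvergenceR1} for the indicator kernel $\1{\cdot\in[0,1]}$ at radii $b-h$, $b$, $b+h$ and concludes from $(n\phi(b))^{-1/2}(\log n)^{1/2}=o(hb^{-1})$; both reduce to the same Bernstein-type martingale inequality.
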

	\begin{proof}
	We begin with a simple upper bound on the difference $|\Delta_{n,b,i}(y) - \Delta_{n,b,i}(x)|$. There are two cases where this term can be nonzero. First, if both $x,y\in U(X_{n,i},b)$ and second if either $( x\in U(X_{n,i},b), y\notin U(X_{n,i},b) )$ or $(y\in U(X_{n,i},b), x\notin U(X_{n,i},b)$. Hence, we obtain  
	\begin{align}
	\begin{split}\label{Eq:SupConvergenceR1_3}
				\left|	\Delta_{n,b,i}(y) - \Delta_{n,b,i}(x)	\right| &\le \max_{0\le s\le 1} |K'(s)| \, \frac{h}{b} \, \1{ d(x,X_{n,i}) \le b } \\
				&\quad + K(0)  \1{ b-h < d(x,X_{n,i})\le b + h} . 
				\end{split}
	\end{align}
First, consider the case for the sum of conditional expectations. One obtains using the assumptions from \ref{C:SmallBall} that
\[
		(n\phi(b))^{-1} \sum_{i=1}^n \E{ \sup_{y\in U(x,h) }|\Delta_{n,b,i}(y) - \Delta_{n,b,i}(x)| \big| \cF_{n,i-1} } \le n^{-1} \sum_{i=1}^n 2 \tilde{L}_{n,i} \Big(\max_{0\le s\le 1} |K'(s)|  + \tilde{L}_{n,i} K(0)\Big) \frac{h}{b} 
\]
which is $\Oas(h b^{-1} )$ $a.s.$ The claim concerning the convergence in the mean is now immediate, too. It remains to prove the statement for the unconditional sum. Consider the special kernel $\bar{K}=\1{\cdot \in [0,1]}$ and the point $x$. By Lemma~\ref{L:ConvergenceR1} 
\[
		(n\phi(b+\veps))^{-1} \sum_{i=1}^n  \1{ d(x,X_{n,i})\le b + \veps } - \E{  \1{ d(x,X_{n,i})\le b + \veps } | \cF_{i=1} } = \Oas( (n\phi(b+\veps))^{-1/2} (\log n)^{1/2})
\]
for $\veps \in \{-h,0,h\}$. By \ref{C:Bandwidth} $ (n\phi(b))^{-1/2} (\log n)^{1/2}= o(h b^{-1})$. So, the unconditional statement is also true.
	\end{proof}

The next result is a generalization of Lemma 3 in \cite{laib2010nonparametric} to the double functional case.
	\begin{lemma}\label{L:LL3}
	Assume \ref{C:Kernel}, \ref{C:SmallBall}, \ref{C:Response} \ref{C:Response1} and \ref{C:Regressor} \ref{C:Regressor1} are satisfied. Then
	$$
			B_n(x) = \frac{M_0}{M_1} \Big(	\sum_{k\in\N} \psi'_{k,x}(0) e_k \Big) h + \oas( h ) \text{ in } \cH  \text{ and }  R_n(x) = \Oas\left( h (\log n)^{1/2} (n \phi(h)  )^{-1/2} \right) \text{ in } \cH .
	$$
	\end{lemma}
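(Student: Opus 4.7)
The plan is to analyze $B_n(x)$ componentwise in the basis $\{e_k\}$ and then reassemble a Hilbert-space norm bound using the summability in \ref{C:Regressor}. Write $A_n(x)\coloneqq \bar{r}_{n,h,2}(x) - r(x)\bar{r}_{n,h,1}(x)$ so that $B_n(x) = A_n(x)/\bar{r}_{n,h,1}(x)$. Using $\E{Y_{n,i}\mid\cG_{n,i-1}} = r(X_{n,i})$ from \ref{C:Response}\ref{C:Response1} and the tower property, the $k$-th coefficient of $A_n(x)$ is
\[
\scalar{A_n(x)}{e_k} = \frac{1}{n\E{\Delta_{1,h,1}(x)}} \sum_{i=1}^n \E{(r_{e_k}(X_{n,i}) - r_{e_k}(x))\,\Delta_{n,h,i}(x) \mid \cF_{n,i-1}}.
\]
Conditioning further on $d(x,X_{n,i})$ and invoking \ref{C:Regressor}\ref{C:Regressor1} (with $y=x$) replaces the increment by $\psi_{k,x}(s) + \bar{g}_{k,n,i,x}(s)$. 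Since $\psi_{k,x}(0)=0$, the mean value theorem and the bound $\sup_{u\le s}|\psi'_{k,x}(u) - \psi'_{k,x}(0)|\,s \le L_k s^{1+\alpha}$ give $|\psi_{k,x}(s) - \psi'_{k,x}(0)\,s| \le L_k s^{1+\alpha}$, and the same $L_k s^{1+\alpha}$ controls $\bar{g}_{k,n,i,x}$.

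Substituting these expansions and using Lemma~\ref{L:LL1}, the dominant term becomes
\[
\psi'_{k,x}(0) \cdot h \cdot \frac{n^{-1}\sum_i \E{(d(x,X_{n,i})/h)\,\Delta_{n,h,i}(x) \mid \cF_{n,i-1}}}{\E{\Delta_{1,h,1}(x)}}.
\]
By Lemma~\ref{L:LL1}(i)--(iii) together with \ref{C:SmallBall}\ref{C:SmallBall3}--\ref{C:SmallBall4}, this ratio equals $M_0/M_1 + \oas(1)$, and crucially the $\oas(1)$ sequence depends only on $(n,x)$, not on $k$. The remainder integrals are bounded by $L_k h^{1+\alpha} K(0)\, F_x^{\cF_{n,i-1}}(h)$, and averaging $F_x^{\cF_{n,i-1}}(h)/\phi(h)$ over $i$ stays $\Oas(1)$ by \ref{C:SmallBall} and \ref{C:SmallBall}\ref{C:SmallBall4}, again with constants independent of $k$. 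This yields
\[
\scalar{A_n(x)}{e_k} = \psi'_{k,x}(0)\, h\, \frac{M_0}{M_1} + \psi'_{k,x}(0)\, h\,\oas(1) + \Oas\!\left(L_k h^{1+\alpha}\right).
\]

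To pass from components to the $\cH$-norm I would square and sum: by \ref{C:Regressor}\ref{C:Regressor1} the series $\sum_k \psi'_{k,x}(0)^2$ and $\sum_k L_k^2$ both converge, so
\[
\Bigl\lVert A_n(x) - h\,\tfrac{M_0}{M_1}\sum_k \psi'_{k,x}(0)\,e_k \Bigr\rVert^2 = h^2\,\oas(1)\sum_k \psi'_{k,x}(0)^2 + \Oas\!\left(h^{2(1+\alpha)}\sum_k L_k^2\right) = \oas(h^2).
\]
Lemma~\ref{L:ConvergenceR1} gives $\bar{r}_{n,h,1}(x) = 1 + \oas(1)$, so dividing by $\bar{r}_{n,h,1}(x)$ multiplies the error by $1+\oas(1)$, producing the claimed expansion of $B_n(x)$; in particular $\norm{B_n(x)} = \Oas(h)$.

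The bound on $R_n(x)$ is then immediate. We have just shown $\norm{B_n(x)} = \Oas(h)$, while Lemma~\ref{L:ConvergenceR1} with $\ell=0$, $j=1$ and $v$ arbitrary yields $|\hat{r}_{n,h,1}(x) - \bar{r}_{n,h,1}(x)| = \Oas((n\phi(h))^{-1/2}(\log n)^{1/2})$; multiplying the two a.s.\ rates gives the stated control of $R_n(x)$ in $\cH$. The main obstacle is the summation-in-$k$ step: the argument hinges on the $\oas(1)$ and $\Oas(1)$ sequences delivered by Lemma~\ref{L:LL1} being $k$-independent so that they factor cleanly out of the squared sum, which is precisely the role of the uniform-in-$y$ formulation in \ref{C:SmallBall}\ref{C:SmallBall3} and the summability in \ref{C:Regressor}\ref{C:Regressor1}.
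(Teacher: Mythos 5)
Your proposal is correct and follows essentially the same route as the paper: the same decomposition $B_n(x)=(\bar r_{n,h,2}(x)-r(x)\bar r_{n,h,1}(x))/\bar r_{n,h,1}(x)$, the same componentwise expansion via the tower property and \ref{C:Regressor}~\ref{C:Regressor1}, the same use of Lemma~\ref{L:LL1}(iii) to identify the $M_0/M_1$ factor, the same $\sum_k L_k^2<\infty$ summation to control the $h^{1+\alpha}$ remainder in the $\cH$-norm, and the same product bound $R_n(x)=-B_n(x)(\hat r_{n,h,1}(x)-\bar r_{n,h,1}(x))$ combined with Lemma~\ref{L:ConvergenceR1}. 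Your explicit remark that the $\oas(1)$ sequence from Lemma~\ref{L:LL1} must be $k$-independent to factor out of the squared sum makes precise a point the paper handles implicitly by factoring $\sum_k\psi'_{k,x}(0)e_k$ out of the main term.
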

	\begin{proof}
	We begin with the term $B_n(x)$ from \eqref{Eq:DefCondBias}. It follows from Lemma \ref{L:ConvergenceR1} that $\bar{r}_{n,h,1}(x) \rightarrow 1$ $a.s.$ Consider
	\begin{align}
				&\bar{r}_{n,h,2}(x) - r(x) 	\bar{r}_{n,h,1}(x) \nonumber \\
				&= (n \E{\Delta_{1,h,1}(x) } )^{-1} \sum_{i=1}^n \E{ \left(	Y_{n,i} - r(x)		\right) \Delta_{n,h,i}(x) \Big| \cF_{n,i-1} } \nonumber \\
				&= (n \E{\Delta_{1,h,1}(x) } )^{-1} \sum_{k\in\N} \sum_{i=1}^n \E{ \left(	r_{e_k}(X_{n,i}) - r_{e_k}(x)		\right) \Delta_{n,h,i}(x) \Big| \cF_{n,i-1} } e_k \nonumber \\
				&= (n \E{\Delta_{1,h,1}(x) } )^{-1} \sum_{k\in\N} \sum_{i=1}^n \E{ \E{ 	r_{e_k}(X_{n,i}) - r_{e_k}(x)		\Big| d(x,X_{n,i}), \cF_{n,i-1} } \Delta_{n,h,i}(x) \Big| \cF_{n,i-1} } e_k \nonumber \\
				&= (n \E{\Delta_{1,h,1}(x) } )^{-1} \sum_{k\in\N} \sum_{i=1}^n \mathbb{E} \Bigg[ \psi'_{k,x}(0) d(x,X_{n,i})  \Delta_{n,h,i}(x) \Big| \cF_{n,i-1} \Bigg] e_k \label{Eq:LL3_1}\\
							\begin{split}\label{Eq:LL3_2}
				&\quad + (n \E{\Delta_{1,h,1}(x) } )^{-1} \sum_{k\in\N} \sum_{i=1}^n \mathbb{E} \Bigg[ \big[	(		\psi'_{k,x}(\xi_{n,i}) - \psi'_{k,x}(0) ) d(x,X_{n,i}) \\
				&\quad\qquad\qquad\qquad \qquad\qquad\qquad\qquad + \bar{g}_{k,i,x}( d(x,X_{n,i}))	\big] \Delta_{n,h,i}(x) \Big| \cF_{n,i-1} \Bigg] e_k,
				\end{split}
	\end{align}
	for a $\xi_{n,i}$ which is between 0 and $d(x,X_{n,i})$. The summand in \eqref{Eq:LL3_1} can be rewritten as
	\begin{align*}
				\left(\frac{\E{\Delta_{1,h,1}(x) }}{\phi(h)} \right)^{-1}  (n \phi(h))^{-1} \sum_{i=1}^n \E{ \Delta_{n,h,i}(x) \frac{ d(x,X_{n,i})  }{h}\Big| \cF_{n,i-1} }  \, \left(	\sum_{k\in\N} \psi'_{k,x}(0) e_k \right) h .
	\end{align*}
	It equals $M_0/M_1 \left(	\sum_{k\in\N} \psi'_{k,x}(0) e_k \right) h + \oas(h)$, using Lemma~\ref{L:LL1} and that $n^{-1}\sum_{i=1}^n f_{n,i,1}(x) \rightarrow f_1(x)>0$ $a.s.$ as well as $n^{-1}\sum_{i=1}^n g_{n,i,x} (h)\phi(h)^{-1}  = o(1)$ $a.s.$ Consider the squared $\cH$-norm of the summand in \eqref{Eq:LL3_2}; it is at most
	\begin{align*}
				 (n \E{\Delta_{1,h,1}(x) }  )^{-2} \left\{ \sum_{i=1}^n \E{\Delta_{n,h,i}(x) | \cF_{n,i-1} } \right\}^2 \, \left\{ \sum_{k\in\N} \left(  L_k h^{1+\alpha} \right)^2\right\}  = \Oas\left(h^{2(1+\alpha)}\right).
	\end{align*}
	This completes the first statement. For the second statement use that
	$
			R_n(x) = - B_n(x) (\hat{r}_{n,h,1}(x)-\bar{r}_{n,h,1}(x))
	$
	and combine the first statement of this lemma with Lemma~\ref{L:ConvergenceR1} which states that the difference $\hat{r}_{n,h,1}(x)-\bar{r}_{n,h,1}(x)$ is $\Oas\left( (n\phi(h))^{-1/2} (\log n)^{1/2}  \right)$.
	\end{proof}

	\begin{lemma}\label{L:NormalityQ}
	Assume \ref{C:Kernel} -- \ref{C:Regressor} for $x\in \cE$. Then
	$
			\sqrt{n\phi(h)} Q_n(x) \rightarrow \fG(0,\cC_x) \text{ in law},
$
	where the covariance operator $\cC_x$ is characterized by \eqref{Eq:GaussianLimit2}.
	\end{lemma}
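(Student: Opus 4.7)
Write
\[
\sqrt{n\phi(h)}\,Q_n(x) = \sum_{i=1}^n \zeta_{n,i}, \qquad \zeta_{n,i} \coloneqq \frac{\sqrt{n\phi(h)}}{n\,\E{\Delta_{1,h,1}(x)}}\Bigl\{ \eta_{n,i} - \E{\eta_{n,i}\mid \cF_{n,i-1}}\Bigr\},
\]
with $\eta_{n,i}=(Y_{n,i}-r(x))\Delta_{n,h,i}(x)$; the $\zeta_{n,i}$ form an $\cH$-valued martingale difference array adapted to $(\cF_{n,i})_i$. By Lemma~\ref{L:LL1}(ii) the scalar prefactor $a_n \coloneqq \sqrt{n\phi(h)}/(n\E{\Delta_{1,h,1}(x)})$ is asymptotically $1/(\sqrt{n\phi(h)}\,M_1 f_1(x))$. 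To obtain weak convergence in $\cH$ the plan is to combine a Cram\'er--Wold argument for the finite-dimensional projections (via a real-valued martingale CLT) with a tightness argument that controls the tail of the orthonormal expansion uniformly in $n$.

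\textbf{Step 1 (finite-dimensional convergence).} Fix $v=\sum_k \gamma_k e_k \in \cH$ and set $Z_{n,i}\coloneqq\scalar{\zeta_{n,i}}{v}$; I would verify the hypotheses of the Hall--Heyde martingale CLT. For the conditional variance, decompose $Y_{n,i}-r(x)=(Y_{n,i}-r(X_{n,i}))+(r(X_{n,i})-r(x))$. The first summand has zero conditional mean given $\cG_{n,i-1}$ by \ref{C:Response}\ref{C:Response1} with conditional second moment $W_{2,v,v}(X_{n,i})\coloneqq \sum_{j,k}\gamma_j\gamma_k W_{2,j,k}(X_{n,i})$ by \ref{C:Response}\ref{C:Response2}, so the tower property yields the principal term
\[
a_n^2\sum_{i=1}^n \E{W_{2,v,v}(X_{n,i})\,\Delta_{n,h,i}^2(x)\mid \cF_{n,i-1}}.
\]
Uniform continuity of $W_{2,j,k}$ on $U(x,h)$ together with Lemma~\ref{L:LL1}(i) reduces this to $W_{2,v,v}(x)\,\phi(h)\,M_2\cdot n^{-1}\sum_i f_{n,i,1}(x)$, which by \ref{C:SmallBall4} converges $a.s.$ to $M_2 W_{2,v,v}(x)/(M_1^2 f_1(x)) = \sigma_{v,x}^2$. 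The second summand $r(X_{n,i})-r(x)$ has $\cH$-norm $\Oas(h)$ on the support of $\Delta_{n,h,i}(x)$ by \ref{C:Regressor}\ref{C:Regressor1}, so its quadratic contribution is $\Oas(h^2)=\oas(1)$ and the cross term is $\oas(1)$ by Cauchy--Schwarz. For the Lindeberg hypothesis I would verify the Lyapunov condition of order $2+\delta'$: the moment bound \ref{C:Response}\ref{C:Response3} combined with $\E{\Delta_{n,h,i}^{2+\delta'}(x)\mid\cF_{n,i-1}}=\Oas(\phi(h))$ gives
\[
\sum_i \E{|Z_{n,i}|^{2+\delta'}\mid \cF_{n,i-1}} = \Oas\bigl((n\phi(h))^{-\delta'/2}\bigr) = \oas(1)
\]
by \ref{C:Bandwidth}. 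Hence $\sum_i Z_{n,i} \Rightarrow \cN(0,\sigma_{v,x}^2)$, and applying the argument to arbitrary finite linear combinations gives joint convergence to the Gaussian law with covariance specified by \eqref{Eq:GaussianLimit2}, via the Cram\'er--Wold device.

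\textbf{Step 2 (tightness) and the main obstacle.} Weak convergence on the separable Hilbert space $\cH$ requires in addition that for every $\epsilon>0$,
\[
\lim_{m\to\infty}\limsup_n \p\Bigl( \sum_{k>m}\scalar{\sqrt{n\phi(h)}\,Q_n(x)}{e_k}^2 > \epsilon \Bigr) = 0.
\]
By Markov's inequality it suffices to bound the expectation; repeating the variance computation of Step~1 coordinate-wise gives, up to $o(1)$, a bound of the form $(M_2/(M_1^2 f_1(x)))\sup_{y\in U(x,\delta)}\sum_{k>m}[W_{2,k,k}(y)+r_{e_k}(y)^2]$, which by the exponential decay in \ref{C:Response}\ref{C:Response2} is $\cO(e^{-a_1 m})$ and vanishes as $m\to\infty$. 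The main delicacy lies in controlling the regression-bias contribution $r(X_{n,i})-r(x)$ uniformly across the Fourier coefficients $\scalar{\cdot}{e_k}$; this is exactly what the summability $\sum_k (\psi'_{k,x}(0))^2+L_k^2<\infty$ in \ref{C:Regressor}\ref{C:Regressor1} together with the decay in \ref{C:Regressor}\ref{C:Regressor2} is designed to provide. Combining Steps 1 and 2 yields the claimed Gaussian limit.
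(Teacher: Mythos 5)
Your proposal is correct and follows essentially the same route as the paper: the same martingale-difference decomposition of $\sqrt{n\phi(h)}\,Q_n(x)$, the same conditional-variance computation via the split $Y_{n,i}-r(x)=(Y_{n,i}-r(X_{n,i}))+(r(X_{n,i})-r(x))$, the same Lyapunov bound of order $2+\delta'$ yielding $\Oas((n\phi(h))^{-\delta'/2})$, and the same use of the exponential tail decay in \ref{C:Response}~\ref{C:Response2} to control $\sum_{k>m}$. The only difference is packaging: the paper invokes the Hilbert-space martingale CLT of \cite{kundu2000central} whose condition (ii) (convergence of the summed traces) plays exactly the role of your separate Cram\'er--Wold-plus-tightness step.
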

	\begin{proof}
	Note that $\sqrt{n \phi(h)} Q_n(x)$ can be rewritten as $\sum_{i=1}^n \xi_{n,i}$, where $\xi_{n,i} = \eta_{n,i} - \E{\eta_{n,i}|\cF_{n,i-1}}$ is an array of martingale differences in $\cH$ with respect to $(\cF_{n,i-1}: i=1,\ldots,n)$ for the random variables
	$$
				\eta_{n,i} = \sqrt{ \frac{ \phi(h) }{n} } \left( Y_{n,i} - r(x)	\right) \frac{\Delta_{n,h,i}(x)}{\E{ \Delta_{n,h,i}(x) } }.
	$$
	Note that $\E{\norm{\xi_{n,i}}^2}<\infty$. In order to establish the asymptotic normality, we use a generalization of the Lindeberg condition for Hilbert-space valued martingale difference arrays from \cite{kundu2000central}. 
	\begin{condition}[Lindeberg condition]\label{C:Lindeberg}\ \\
	\vspace{-1.5em}
	\begin{enumerate}[label=\textnormal{(\roman*)}]\setlength\itemsep{0em}

		\item \label{C:Lindeberg1} $\lim_{n\rightarrow \infty} \sum_{i=1}^n \E{ \scalar{\xi_{n,i}}{v}^2 | \cF_{n,i-1} } = \sigma^2_v(x)$ for some $\sigma_v\in\R_+$ in probability, for every $v\in\cH$. 
		
		\item \label{C:Lindeberg2} $\lim_{n\rightarrow \infty} \sum_{k\in\N } \sum_{i=1}^n \E{ \scalar{\xi_{n,i}}{e_k}^2 } = \sum_{k\in\N} \sigma^2_{e_k}(x) < \infty$ 
		
		\item	\label{C:Lindeberg3}
		$
				\sum_{i=1}^n \E{ \scalar{\xi_{n,i}}{e_k}^2 \1{| \scalar{\xi_{n,i}}{e_k}| > \rho } | \cF_{n,i-1} }\rightarrow 0
				$
				 in probability for every $\rho>0$ and every $k\ge 1$.
	\end{enumerate}
	\end{condition}
	If these criteria are satisfied, then $\sum_{i=1}^n \xi_{n,i} \rightarrow \fG(0,\cC_x)$ in distribution where the covariance operator $\cC_x$ is characterized by the condition $\scalar{\cC_x v}{v} = \sigma_v^2(x)$. We begin with (i). Let $v=\sum_{k\in\N} \gamma_k e_k \in\cH$ be arbitrary but fixed. Write $\eta_{v,n,i}$ (resp.\ $\xi_{v,n,i}$) for $\scalar{\eta_{n,i}}{v}$ (resp.\ $\scalar{\xi_{n,i}}{v}$). Similar as in \cite{laib2010nonparametric}, we make use of the inequality
	$$
			\left| \sum_{i=1}^n \E{\eta_{v,n,i}^2 | \cF_{n,i-1} } - \E{\xi_{v,n,i}^2 | \cF_{n,i-1} } \right| \le \sum_{i=1}^n 	\E{\eta_{v,n,i} | \cF_{n,i-1} }^2.
	$$
	We show that the right-hand-side converges to 0 $a.s.$ W.l.o.g.\ assume that $d(x,X_{n,i})\le 1$ and consider
	\begin{align*}
				\left| \E{ r_v(X_{n,i})-r_v(x) | d(x,X_{n,i})=s, \cF_{n,i-1} } \right| &= \left| \sum_{k=1}^n \gamma_k \, \E{ r_{e_k}(X_{n,i})-r_{e_k}(x) | d(x,X_{n,i})=s, \cF_{n,i-1} } \right| \\
				&\le \left| \sum_{k=1}^n  \gamma_k \left(	\psi'_{k,x}(0) d(x,X_{n,i}) + \cO\left( L_k d(x,X_{n,i})^{1+\alpha}		\right)	\right) \right| \\
				&\le C \left(\sum_{k=1}^n \gamma_k^2 \right)^{1/2} \left( \left(\sum_{k=1}^n \psi'_{k,x}(0)^2 \right)^{1/2} + \left(\sum_{k=1}^n L_k^2 \right)^{1/2} 	\right) d(x,X_{n,i}),
	\end{align*}
	for a certain constant $C \in \R_+$. And by assumption $\sum_{k\in\N} \gamma_k^2 + {\psi'}_{k,x}(0)^2 + L_k^2 < \infty$. Consequently,
	\begin{align*}
			\left| \E{ \eta_{v,n,i} | \cF_{n,i-1} } \right| &= \frac{\sqrt{ \phi(h)/n } }{ \E{\Delta_{1,h,1}(x) } } \left|	\E{ (r_v(X_{n,i})-r_v(x)) \Delta_{n,h,i}(x) | \cF_{n,i-1} }	\right| \\
			&=\cO\left( \phi(h)^{1/2} \, n^{-1/2} \, h \, \E{\Delta_{1,h,1}(x) }^{-1} 	\E{  \Delta_{n,h,i}(x) | \cF_{n,i-1} } \right).
	\end{align*}
	In particular, if we use that $f_{n,i,1}(x) \le \tilde{L}_{n,i}$ and $g_{n,i,x}(h) \phi(h)^{-1} \le \tilde{L}_{n,i}$ as well as $\limsup_{n\rightarrow\infty} n^{-1} \sum_{i=1}^n \tilde{L}_{n,i}^2 <\infty$, we obtain
	$
			\sum_{i=1}^n \E{ \eta_{v,n,i} | \cF_{n,i-1} }^2 = \Oas( \phi(h) h^2 ) $. Thus, it suffices to consider the $\eta_{v,n,i}$ instead and demonstrate
	$
				\lim_{n\rightarrow\infty} \sum_{i=1}^n \E{ \eta_{v,n,i}^2 | \cF_{n,i-1} } = \sigma_v^2(x) 
	$ in probability. Observe that
	\begin{align}
	\begin{split}\label{Eq:NormalityQ1}
					\sum_{i=1}^n \E{ \eta_{v,n,i}^2 | \cF_{n,i-1} } &= \frac{ \phi(h) } { n \E{\Delta_{1,h,1}(x) }^2 }  \sum_{i=1}^n \E{ \Delta_{n,h,i}(x)^2 \E{(Y_{v,i}-r_v(X_{n,i}))^2 | \cG_{n,i-1} } |\cF_{n,i-1}  } \\
						&\quad + 	\frac{ \phi(h) } { n \E{\Delta_{1,h,1}(x) }^2 }  \sum_{i=1}^n \E{ \Delta_{n,h,i}(x)^2 (r_v(X_{n,i})-r_v(x) )^2 |\cF_{n,i-1}  } =: J_{1,n} + J_{2,n}.
						\end{split}
	\end{align}
 Next, use that $n^{-1}\sum_{i=1}^n f_{n,i,1}(x) \rightarrow f_1(x)$ $a.s.$ for both terms $J_{1,n}$ and $J_{2,n}$.	Then there is a $C\in\R_+$ such that the second term is bounded above by
	\[
			J_{2,n} \le C \norm{v}^2	\Big(\sum_{k\in\N} \psi_{2,k,x}'(0) + L_{2,k}	\Big) \frac{M_2 f_1(x) + \oas(1)}{ M_1^2 f_1(x)^2 + o(1)}\, h  = \Oas(h),
	\]
	using the assumptions on the functions $\psi_{2,k,x}$ and $\bar{g}_{2,k,i,x}$ from \ref{C:Regressor}~\ref{C:Regressor2}.
	Consider the first summand in \eqref{Eq:NormalityQ1} and use the assumptions on the family of operators $W_{2,j,k}$ from \ref{C:Response}~\ref{C:Response1} and \ref{C:Response2}. Then
	\begin{align}
					J_{1,n} &= \frac{ \phi(h) } { n \E{\Delta_{1,h,1}(x) }^2 }  \sum_{i=1}^n \E{ \Delta_{n,h,i}(x)^2 \Big( \sum_{j,k} \gamma_j \gamma_k W_{2,j,k}(x) + \norm{v}^2 o(1) \Big) \Big|\cF_{n,i-1}  }  \nonumber \\
					& \rightarrow \frac{M_2}{M_1^2 f_1(x) } \Big(\sum_{j,k} \gamma_j \gamma_k W_{2,j,k}(x) \Big) = \sigma_v^2(x) \quad a.s. \label{Eq:NormalityQ2}
	\end{align}	
	So, it remains to show that the last limit in \eqref{Eq:NormalityQ2} is meaningful. Indeed, it follows from the definition that
	\begin{align*}
				\sum_{j,k} \gamma_j \gamma_k W_{2,j,k}(x) &= \E{ \sum_{j,k} \gamma_j \gamma_k (Y_{e_j,n,i}-r_{e_j}(X_{n,i}))(Y_{e_k,n,i}-r_{e_k}(X_{n,i})) \Bigg| \cG_{n,i-1}, X_{n,i} = x } \\
				&= \E{ \left( \sum_{j} \gamma_j  (Y_{e_j,n,i}-r_{e_j}(X_{n,i})) \right)^2 \Biggl| \cG_{n,i-1}, X_{n,i} = x }  \\
				&\le \norm{v}^2 \E{ \norm{Y_{n,i}-r(X_{n,i})}^2 \Big|\cG_{n,i-1}, X_{n,i} = x } < \infty.
	\end{align*}
	The property (ii) from the Lindeberg condition follows similarly as (i). Again use 
	\begin{align*}
			\left| \sum_{k\in\N} \sum_{i=1}^n \E{ \eta_{e_k,n,i}^2 - \xi_{e_k,n,i}^2		} \right| &\le  \sum_{k\in\N} \sum_{i=1}^n \E{  \E{\eta_{e_k,n,i} | \cF_{n,i-1}}^2 } \\
			&= \cO\left\{ \left( \sum_{k\in\N} \psi'_{k,x}(0)^2 + L_k^2		\right) \frac{\phi(h)}{\E{\Delta_{1,h,1}(x) }^2 } \E{ \Delta_{1,h,1}(x)^2	} h^2 \right\} = \cO(h^2).
	\end{align*}
	Thus, it suffices to consider
	\begin{align}\begin{split}\label{Eq:NormalityQ2b}
			\sum_{k\in\N} \sum_{i=1}^n \E{ \eta_{e_k,n,i}^2} &= \frac{\phi(h)}{\E{\Delta_{1,h,1}(x)}^2 } \E{\sum_{k\in\N} (Y_{e_k,n,i} - r_{e_k}(X_{n,i}))^2 \Delta_{1,h,1}(x)^2 } \\
			&\quad+ \frac{\phi(h)}{\E{\Delta_{1,h,1}(x)}^2 } \E{\sum_{k\in\N} ( r_{e_k}(X_{n,i}) - r_{e_k}(x) )^2 \Delta_{1,h,1}(x)^2 }. 
	\end{split}\end{align}
	Using the assumptions on the functions $\psi_{2,k,x}$, one can again show that the second term \eqref{Eq:NormalityQ2b} is $\cO(h)$. While one finds that the first term in \eqref{Eq:NormalityQ2b} behaves as
	\[
		 \frac{\phi(h)}{\E{\Delta_{1,h,1}(x)}^2 } \E{ \Delta_{1,h,1}(x)^2 } \Big\{	\sum_{k\in\N} W_{2,k,k}(x) + o(1)	\Big\} \rightarrow \sum_{k\in\N} \sigma_{e_k}^2 (x) < \infty.
	\]
	This proves (ii). Finally, we verify the Lindeberg condition (iii) for each projection. Therefore, we proceed similarly as in the finite-dimensional case in \cite{laib2010nonparametric}. Let $\rho>0$ and use that 
	\begin{align}\label{Eq:NormalityQ3}
		\E{ \xi_{v,n,i}^2 \1{|\xi_{v,n,i}| > \rho} |\cF_{n,i-1} } &\le 4 \E{  |\eta_{v,n,i}|^2 \1{|\eta_{v,n,i}| > \rho/2}  |\cF_{n,i-1}} \nonumber \\
		&\le  4 \E{  |\eta_{v,n,i}|^{2a} |\cF_{n,i-1} } \left( \frac{\rho}{2} \right)^{-2a/b},
		\end{align}
	where the numbers $a,b \ge 1$ satisfy $a^{-1}+b^{-1}=1$. We use that $y\mapsto \E{ |Y_{v,n,i}|^{2+\delta'} | X_{n,i} = y, \cF_{n,i-1} }$ is bounded uniformly in a neighborhood of $x$ by \ref{C:Response}~\ref{C:Response3}. Then choose $a = 1 + \delta'/2$ for $\delta'$ from \ref{C:Response}~\ref{C:Response3}. We obtain that \eqref{Eq:NormalityQ3} is at most (modulo a constant)
	\begin{align*}
			&\left(\frac{\phi(h)}{n}\right)^{(2+\delta')/2} \frac{1}{\E{\Delta_{1,h,1}(x)}^{2+\delta'} } \E{ |Y_{v,n,i}-r_v(x)|^{2+\delta'} \Delta_{n,h,i}(x)^{2+\delta'} | \cF_{n,i-1} } \\
			&\le C \left(\frac{\phi(h)}{n}\right)^{(2+\delta')/2} \frac{\norm{v}^{2+\delta'} }{\E{\Delta_{1,h,1}(x)}^{2+\delta'} } \E{  \Delta_{n,h,i}(x)^{2+\delta'} | \cF_{n,i-1} }.
	\end{align*}
	Thus, using the convergence results implied by Lemma~\ref{L:LL1}, we obtain that
	$
			\sum_{i=1}^n \E{ \xi_{v,n,i}^2 \1{|\xi_{v,n,i}| > \rho} |\cF_{n,i-1} } = \Oas\left(	(n \phi(h))^{-\delta'/2}\right)$.
	This demonstrates (iii) and completes the proof.
	\end{proof}

	\begin{proof}[Proof of Theorem~\ref{Thrm:GaussianLimit}]
	We use the decomposition from \eqref{Eq:DefCondBias} and \eqref{Eq:DefCondBias2} to write
	\begin{align}\label{Eq:GaussianLimit1}
				\sqrt{n \phi(h)} \left(	\hat{r}_{n,h}(x) - r(x)	\right)  = \sqrt{n \phi(h)} \frac{Q_n(x) + R_n(x) }{\hat{r}_{n,h,1} } + \sqrt{n \phi(h)}  B_n(x).
	\end{align}
	Lemma~\ref{L:ConvergenceR1} states that $\hat{r}_{n,h,1}$ converges $a.s.$ to 1. Combining this result with Lemma~\ref{L:NormalityQ} yields that $\sqrt{n \phi(h)} Q_n(x)/\hat{r}_{n,h,1}$ converges to the Gaussian distribution $\fG(0,\cC_x)$. Hence, it remains to show that the remaining terms are negligible resp.\ at least bounded. We deduce from Lemma~\ref{L:LL3} that $\sqrt{n \phi(h) }  R_n(x) $ is $\Oas(h \sqrt{\log n})=\oas(1)$. Furthermore, $\sqrt{n \phi(h)}  B_n(x) $ is $\Oas\left( h	\sqrt{n \phi(h)}	\right)$.
	\end{proof}

\subsection*{Results on the bootstrap}
	\begin{lemma}\label{L:UnifExpIneqInnovations}
	Assume \ref{C:Kernel} -- \ref{C:Covering} and let $v\in\cH$. Then for $\ell\in\{0,1\}$
	\begin{align*}
		&\sup_{y\in U(x, h) } (n\phi(b))^{-1} \Big| \sum_{i=1}^n Y_{v,n,i}^\ell (\Delta_{n,b,i}(y)- \Delta_{n,b,i}(x))  \\
		&\qquad\qquad\qquad\qquad- \E{Y_{v,n,i}^\ell (\Delta_{n,b,i}(y)- \Delta_{n,b,i}(x))  | \cF_{n,i-1} } \Big| =  o\big( (n \phi(h))^{-1/2} \big) \quad a.s.
	\end{align*}
	\end{lemma}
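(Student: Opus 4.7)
The plan is to use the covering from \ref{C:Covering} to reduce the supremum over $U(x,h)$ to a maximum over the $\kappa_n = \cO(n^{b/h})$ centers $z_{n,u}$, apply the exponential inequality of Lemma~\ref{ExpIneqMDS} pointwise at each center, and control the within-ball oscillation using the $C^1$-smoothness of $K$ together with the thin-annulus estimate in \ref{C:SmallBall}~\ref{C:SmallBall1}.

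For each $y \in U(x,h)$ pick $u(y)$ with $d(y, z_{n,u(y)}) \le \ell_n$. As in the proof of Lemma~\ref{L:SupConvergenceR1}, the bound \eqref{Eq:SupConvergenceR1_3} with $h$ replaced by $\ell_n$ controls $|\Delta_{n,b,i}(y) - \Delta_{n,b,i}(z_{n,u(y)})|$ pointwise. Multiplying by $Y_{v,n,i}^\ell$, then using the conditional-moment bound from \ref{C:Response}~\ref{C:Response3} together with the small-ball and thin-annulus estimates in \ref{C:SmallBall}, the supremum over $y$ of the within-ball oscillation (centred by its conditional mean) is $\Oas(\ell_n/b)$, which by the choice $\ell_n = o(b(n\phi(h))^{-1/2}(\log n)^{-1})$ is $o((n\phi(h))^{-1/2})$.

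It remains to control, uniformly in $u \in \{1,\ldots,\kappa_n\}$, the centred sum
\[
\Sigma_n^u := \sum_{i=1}^n Z_{n,i}^u, \qquad Z_{n,i}^u := Y_{v,n,i}^\ell\big[\Delta_{n,b,i}(z_{n,u}) - \Delta_{n,b,i}(x)\big] - \E{Y_{v,n,i}^\ell\big[\Delta_{n,b,i}(z_{n,u}) - \Delta_{n,b,i}(x)\big]\big|\cF_{n,i-1}}.
\]
The same split \eqref{Eq:SupConvergenceR1_3}, the thin-annulus estimate in \ref{C:SmallBall}~\ref{C:SmallBall1} (applied at radius $b$ with width $h$), and \ref{C:Response}~\ref{C:Response3} yield conditional moments $\E{|Z_{n,i}^u|^m\big|\cF_{n,i-1}} \le \tfrac{m!}{2}a_{n,i}^2 b_0^{m-2}$ with $\sum_i a_{n,i}^2 = \Oas(n\phi(b)(h/b))$ and a fixed constant $b_0$. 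With $T_n := \epsilon_n n\phi(b)(n\phi(h))^{-1/2}$, Lemma~\ref{ExpIneqMDS} then gives
\[
\p\big(|\Sigma_n^u| > T_n\big) \le 2\exp\!\Big(-c\,\tfrac{b}{h}\,\tfrac{\phi(b)}{\phi(h)}\,\epsilon_n^2\Big).
\]
A union bound over $u$ multiplies the left-hand side by $\kappa_n \le Cn^{b/h}$; choosing $\epsilon_n^2 = C'(\phi(h)/\phi(b))(\log n)^2$ makes the resulting probability summable in $n$, while $\epsilon_n \to 0$ by \ref{C:Bandwidth}, so Borel--Cantelli concludes.

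The delicate points are twofold: first, verifying that the variance regime of Lemma~\ref{ExpIneqMDS} dominates its Cram\'er tail -- i.e., $h/b \gg \epsilon_n(n\phi(h))^{-1/2}$ -- which follows from $b = o(h(n\phi(h))^{1/2})$ in \ref{C:Bandwidth}; and second, balancing the union-bound loss $n^{b/h}$ against the exponent, which is exactly what the bandwidth condition $\phi(h)/\phi(b)\,(\log n)^2 = o(1)$ in \ref{C:Bandwidth} is designed to allow. For $\ell = 1$ the argument is structurally identical, with the extra factor of $Y_{v,n,i}$ absorbed into a slightly larger constant $b_0$ via \ref{C:Response}~\ref{C:Response3}.
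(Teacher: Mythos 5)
Your proposal is correct and follows essentially the same route as the paper's proof: a covering of $U(x,h)$ by the $\kappa_n$ balls from \ref{C:Covering}, a pointwise martingale exponential inequality (Lemma~\ref{ExpIneqMDS}) with conditional variance proxy of order $\phi(b)\,h/b$ obtained from the split \eqref{Eq:SupConvergenceR1_3} and \ref{C:SmallBall}~\ref{C:SmallBall1}, a union bound over the centres balanced against $\kappa_n=\cO(n^{b/h})$ via \ref{C:Bandwidth}, and an $\Oas(\ell_n/b)$ bound for the within-ball oscillation. The only difference is cosmetic parametrization of the threshold ($\epsilon_n (n\phi(h))^{-1/2}$ with $\epsilon_n^2 = C'\,[\phi(h)/\phi(b)](\log n)^2$ versus the paper's $c\sqrt{\log n/(n\phi(b))}$), which yields the same conclusion.
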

	\begin{proof}
	 We only prove the statement for $\ell=1$. Set
	\[
	Z_{v,n,i}(y,x) =  \phi(b)^{-1} Y_{v,n,i} (\Delta_{n,b,i}(y)- \Delta_{n,b,i}(x)) - \phi(b)^{-1} \E{ Y_{v,n,i} (\Delta_{n,b,i}(y)- \Delta_{n,b,i}(x)) | \cF_{n,i-1} }.
	\]
	First, we prove the following exponential inequality which holds for $v\in\cH$ and $y\in U(x,h)$.
	\begin{align}
	\begin{split}\label{E:ExpIneqR2II0}
				\p\left(	 n^{-1} \left|\sum_{i=1}^n Z_{v,n,i}(y,x) \right| \ge t	\right) 	&\le 2 \exp\left(	- \frac{1}{2} \frac{t^2 n \phi(b) }{A_1 h b^{-1} + B_1 t }\right)
	\end{split}\end{align}
	for certain $A_1,B_1\in\R_+$ (independent of the choice of $v$ and $y$). We proceed as in the proof of Lemma~\ref{L:SupConvergenceR1} to obtain
	\begin{align*}
			\E{ |Z_{v,n,i}(y,x) |^m | \cF_{n,i-1}}
			&\le (4 K(0) \phi(b)^{-1} \tilde{H}) ^{m-2} m! \cdot 2^4 \Big(\max_{0\le s\le 1} |K'(s)|  + \tilde{L}_{n,i} K(0)\Big)  \tilde{L}_{n,i} h b^{-1} \phi(b)^{-1}.
	\end{align*}
 An application of Lemma~\ref{ExpIneqMDS} yields \eqref{E:ExpIneqR2II0}. Second, consider a covering of $U(x,h)$ with $\kappa_n$ balls of diameter $\ell_n$ centered at points $z_{n,u}$ for $u=1,\ldots,\kappa_n$ as in \ref{C:Covering}. Then
	\begin{align}		\begin{split}\label{E:UnifExpIneqR2_1}
			\sup_{y\in U(x,h) } \left|n ^{-1} \sum_{i=1}^n Z_{v,n,i}(y,x) \right| 
			&	\le  \max_{1 \le u \le \kappa_n }\left| n^{-1} \sum_{i=1}^n Z_{v,n,i}(z_{n,u},x) \right| \\
			&\quad + \max_{1 \le u \le \kappa_n } \sup_{y \in U(z_{n,u}, \ell_n)  }\left| n^{-1} \sum_{i=1}^n  Z_{v,n,i}(y,z_{n,u})   \right|.
	\end{split}\end{align}
	We begin with the first term in \eqref{E:UnifExpIneqR2_1}. Using the inequality from \eqref{E:ExpIneqR2II0}, we obtain
	\begin{align}\begin{split}\label{E:UnifExpIneqR2_1b}
			\p\left(	\max_{1 \le u \le \kappa_n }\Big| n^{-1} \sum_{i=1}^n Z_{v,n,i}(z_{n,u},x)  \Big|  > t	\right) 
			&\le 2\kappa_n  \exp\left(	- \frac{1}{2} \frac{t^2 n \phi(b) }{ A_1 h b^{-1} + B_1  t }\right).
	\end{split}\end{align}
By assumption, $\kappa_n = \cO( n^{b/h} )$ and $b = \cO( h \sqrt{n \phi(h)})$. Thus, if we choose $t = c \sqrt{ \log n / (n \phi(b)) }$ for a certain $c>0$, \eqref{E:UnifExpIneqR2_1b} is summable over $n\in\N$. This implies in particular that the first maximum on the right-hand-side of \eqref{E:UnifExpIneqR2_1} is $\Oas(\sqrt{ \log n / (n \phi(b)) } ) = \oas( (n \phi(h))^{-1/2} )$.

	Next, the second term in \eqref{E:UnifExpIneqR2_1} can be bounded above with similar arguments as those used in the derivation of \eqref{Eq:SupConvergenceR1_3} in Lemma~\ref{L:SupConvergenceR1}. Set
	\[
	\tilde{Z}_{v,n,i}(u) \coloneqq \phi(b)^{-1} |Y_{v,n,i}| \left(	\ell_n b^{-1} \1{d(z_{n,u},X_{n,i}) \le b} + \1{ b- \ell_n < d(z_{n,u},X_{n,i}) \le b + \ell_n} \right)
	\]
	for $1\le u \le \kappa_n$. Then
	\begin{align}
			&\max_{1 \le u \le \kappa_n} \sup_{y \in U(z_{n,u}, \ell_n) }\left| (n\phi(b))^{-1} \sum_{i=1}^n Y_{v,n,i} (\Delta_{n,b,i}(y)- \Delta_{n,b,i}(z_{n,u})) \right|  \le \max_{1 \le u \le \kappa_n} C n^{-1} \sum_{i=1}^n \tilde{Z}_{v,n,i}(u)  \label{E:UnifExpIneqR2_2}
	\end{align}
	for a certain constant $C\in\R_+$. $\E{ |Y_{v,n,i}| | X_{n,i}= y, \cF_{n,i-1} } $ is bounded in a neighborhood of $x$ by \ref{C:Response}. We obtain
	\begin{align*}
			&\max_{1 \le u \le \kappa_n} n^{-1} \sum_{i=1}^n \E{	\tilde{Z}_{v,n,i}(u) \big|\cF_{n,i-1} } = \Oas( \ell_n / b).
	\end{align*}
	Moreover, arguing as in the derivation of the first exponential inequality in this proof, there are $A_2,B_2\in\R_+$ such that
	\[
			\p\left( \max_{1 \le u \le \kappa_n} n^{-1} \Big| \sum_{i=1}^n 	\tilde{Z}_{v,n,i}(u) - \E{	\tilde{Z}_{v,n,i}(u) \big|\cF_{n,i-1} } \Big| \ge t \right) \le 2 \kappa_n \exp\left(-\frac{1}{2} \frac{t^2 n \phi(b)}{A_2 \ell_n b^{-1}	+ B_2 t }\right),
	\]
	this last upper bound is dominated by that in \eqref{E:UnifExpIneqR2_1b}. Consequently \eqref{E:UnifExpIneqR2_2} is $\oas( (n\phi(h))^{-1/2} )$ and the same is true for second term in \eqref{E:UnifExpIneqR2_1}.
	\end{proof}

	\begin{lemma}\label{L:UniformExpIneqInnovationsHilbert}
	Assume \ref{C:Kernel} -- \ref{C:Covering}. Then 
	\begin{align*}
			&\sup_{y\in U(x,h) } (n\phi(b))^{-1} \Big\{ Y_{n,i} (\Delta_{n,b,i}(y)-\Delta_{n,b,i}(x) )  - \E{Y_{n,i} (\Delta_{n,b,i}(y)-\Delta_{n,b,i}(x)) |\cF_{n,i-1} } \Big\} = \oas((n\phi(h))^{-1/2}).
			\end{align*}
	\end{lemma}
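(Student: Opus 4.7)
The strategy mirrors the way Corollary~\ref{Cor:ConvergenceR2} upgrades Lemma~\ref{L:ConvergenceR1} to a Hilbert-valued statement, but with the pointwise-in-$y$ inequality of Lemma~\ref{L:ConvergenceR1} replaced by its uniform-in-$y$ version \eqref{E:ExpIneqR2II0} established in the proof of Lemma~\ref{L:UnifExpIneqInnovations}. Writing
\[
T_{v,n}(y) \coloneqq (n\phi(b))^{-1} \sum_{i=1}^n \Big\{ Y_{v,n,i}(\Delta_{n,b,i}(y)-\Delta_{n,b,i}(x)) - \E{ Y_{v,n,i}(\Delta_{n,b,i}(y)-\Delta_{n,b,i}(x)) | \cF_{n,i-1} } \Big\},
\]
the claim is equivalent to $\sup_{y\in U(x,h)} \sum_{k\in\N} T_{e_k,n}(y)^2 = \oas((n\phi(h))^{-1})$. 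Fixing a threshold $m_n = c_1 \log n$, split
\[
\sup_y \sum_{k\in\N} T_{e_k,n}(y)^2 \le \sum_{k \le m_n} \sup_y T_{e_k,n}(y)^2 + \sup_y \sum_{k > m_n} T_{e_k,n}(y)^2.
\]

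For the tail, exploit the envelope $|\Delta_{n,b,i}(y)-\Delta_{n,b,i}(x)| \le D_{n,i}$ from \eqref{Eq:SupConvergenceR1_3}, where $D_{n,i}$ depends only on $X_{n,i}$, so that $\sup_y$ slides inside and $\sup_y T_{e_k,n}(y)^2$ is dominated (up to a factor of $2$) by $((n\phi(b))^{-1}\sum_i |Y_{e_k,n,i}| D_{n,i})^2$ plus its conditional-expectation analogue. A Cauchy--Schwarz in $i$, combined with conditioning on $\cG_{n,i-1}$ and the identity $\E{ Y_{e_k,n,i}^2 | \cG_{n,i-1} } = W_{2,k,k}(X_{n,i}) + r_{e_k}(X_{n,i})^2$ from \ref{C:Response}, gives
\[
\sum_{k>m_n} \E{ \sup_y T_{e_k,n}(y)^2 } \le C\, \phi(b)^{-2} \cdot \E{D_{1,1}^2} \cdot \sup_{z\in U(x,\delta)} \sum_{k>m_n} \big( W_{2,k,k}(z) + r_{e_k}(z)^2 \big) = \cO\!\left(\phi(b)^{-1} e^{-a_1 m_n}\right),
\]
using $\E{D_{1,1}^2} = \cO(\phi(b))$ from \ref{C:SmallBall} and the exponential decay of the tail in \ref{C:Response}~\ref{C:Response2} (valid since $D_{n,i}\neq 0$ forces $X_{n,i}\in U(x,b+h)\subset U(x,\delta)$ for $n$ large). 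Markov's inequality applied at level $(n\phi(h))^{-1}$ then yields a bound of order $n\phi(h)\phi(b)^{-1} e^{-a_1 m_n} = \cO(n^{1-a_1 c_1}/(\log n)^2)$ because $\phi(h)/\phi(b)(\log n)^2 = o(1)$ from \ref{C:Bandwidth}, and this is summable provided $c_1$ is chosen large enough relative to $a_1^{-1}$.

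For the head $k \le m_n$, apply the exponential inequality \eqref{E:ExpIneqR2II0} with $v=e_k$, including the covering step from the proof of Lemma~\ref{L:UnifExpIneqInnovations}, and take a union bound over $k$:
\[
\p\!\left( \max_{k\le m_n} \sup_y |T_{e_k,n}(y)| > t_n \right) \le 2 m_n \kappa_n \exp\!\left(-\frac{t_n^2 n\phi(b)}{2(A_1 h/b + B_1 t_n)}\right).
\]
Choosing $t_n^2 = C' \log n/(n\phi(b))$ with $C'$ large enough relative to $b/h$ makes the right-hand side summable (since $\kappa_n = \cO(n^{b/h})$ by \ref{C:Covering}), and the resulting contribution to the squared norm is $m_n t_n^2 = \cO((\log n)^2/(n\phi(b))) = o((n\phi(h))^{-1})$, again by $\phi(h)/\phi(b)(\log n)^2 = o(1)$. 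Borel--Cantelli closes the argument. The main obstacle is the simultaneous calibration of $m_n$, $t_n$, and $\kappa_n$ so that both halves of the decomposition are summable at the target rate $(n\phi(h))^{-1}$; all the necessary slack is provided by \ref{C:Bandwidth} (especially $\phi(h)/\phi(b)(\log n)^2 = o(1)$ and $b = o(h\sqrt{n\phi(h)})$) together with the exponential decay in \ref{C:Response}~\ref{C:Response2}.
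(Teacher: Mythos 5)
Your proposal follows essentially the same route as the paper's own proof: the same splitting of the squared Hilbert norm at a basis index $m\sim\log n$, a Markov/second-moment bound combined with the exponential decay of $\sup_z\sum_{k>m}(W_{2,k,k}(z)+r_{e_k}(z)^2)$ from \ref{C:Response}~\ref{C:Response2} for the tail, and the covering from \ref{C:Covering} plus the exponential inequality \eqref{E:ExpIneqR2II0} with a union bound over $k\le m$ and the $\kappa_n$ centers for the head, with $[\phi(h)/\phi(b)](\log n)^2=o(1)$ converting the resulting $(n\phi(b))^{-1}(\log n)^2$ rate into $o((n\phi(h))^{-1})$. The argument is correct (the only cosmetic imprecision is applying Markov at level exactly $(n\phi(h))^{-1}$ rather than at an $o((n\phi(h))^{-1})$ level, but your expectation bound leaves ample slack for the $\oas$ conclusion).
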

	\begin{proof}
	We begin with the fundamental decomposition of the Hilbert space-valued sequence $\sum_{i=1}^n Z_{n,i}$ where
	\[
		Z_{n,i}(y,x)= (n\phi(b))^{-1} \left\{ Y_{n,i} (\Delta_{n,b,i}(y)-\Delta_{n,b,i}(x) ) - \E{Y_{n,i} (\Delta_{n,b,i}(y)-\Delta_{n,b,i}(x)) |\cF_{n,i-1} } \right\}.
		\]
	Write again $Z_{v,n,i}$ for $\scalar{Z_{n,i}}{v}$, for $v\in\cH$. For every $m\in\N$ 
	\begin{align}\label{E:UniformExpIneqInnovationsHilbert1}
				\norm{ \sum_{i=1}^n Z_{n,i}(y,x) }^2  &= \sum_{k\le m } \Big|\sum_{i=1}^n Z_{e_k,n,i}(y,x)\Big|^2  + \sum_{k>m}\Big|\sum_{i=1}^n Z_{e_k,n,i}(y,x)\Big|^2 .
	\end{align}
	We set $m = (2+\tilde{\delta}) a_1^{-1} \log n$ and $t = c (n\phi(b))^{-1/2} \log n$ for some $\tilde{\delta}>0$, with $a_1$ from \ref{C:Response}~\ref{C:Response2} and for some constant $c$ which will be characterized below. Consider the second double sum in \eqref{E:UniformExpIneqInnovationsHilbert1}. 
	We will use that $\E{ |Y_{e_k,n,i}|^2 | \cF_{n,i-1}, X_{n,i} }$ is bounded above by $W_{2,k,k}(X_{n,i}) + r(X_{n,i})^2$. We have
	\begin{align*}
		&\p\left( \sup_{y\in U(x,h) }  \sum_{k>m}\Big|	\sum_{i=1}^n Z_{e_k,n,i}(y,x)\Big|^2  > t^2 \right) \\
		&\le \frac{1}{t^2}	\E{ \sup_{y\in U(x,h) }  \sum_{k>m}\Big|	\sum_{i=1}^n Z_{e_k,n,i}(y,x)\Big|^2 } \\
		&\le \frac{2}{\phi(b)^2 t^2} \sum_{k>m} \E{  Y_{e_k,n,i}^2 \sup_{y\in U(x,h) } \Big|	\Delta_{n,b,i}(y) - 	\Delta_{n,b,i}(x)  \Big|^2	} \\
		&\le \frac{C n}{c^2 \phi(b) (\log n)^2 } \sum_{k>m}\E{  \E{ Y_{e_k,n,i}^2 | \cF_{n,i-1}, X_{n,i}}  \sup_{y\in U(x,h) }  |\Delta_{n,b,i}(y) - 	\Delta_{n,b,i}(x) | } \\
		&\le \cO\left( \frac{  n}{ (\log n)^2 }  \frac{h}{b} \sup_{y\in U(x,2h) } \sum_{k>m} W_{2,k,k}(y) + r_{e_k}(y)^2 \right),
	\end{align*}
	Using the definition of $m$ and the uniform decay of the functions $W_{2,k,k}$ and $r_{e_k}$, we find that this last bound is $\cO( n^{-(1+\tilde{\delta})})$. Consequently, for the current choices of $m$ and $t$
	\[
			\sum_{n\in\N} \p\left(	 \sup_{y\in U(x,h) } \sum_{k>m}\Big|	\sum_{i=1}^n Z_{e_k,n,i}(y,x)\Big|^2 > t^2\right) = \cO \left( \sum_{n\in\N} 
		 n^{-(1+\tilde{\delta})} \right) < \infty.
	\] 
	In particular, $ \sup_{y\in U(x,h) }  \sum_{k>m}\sum_{i=1}^n Z_{e_k,n,i}(y,x)e_k = \Oas( (n\phi(b))^{-1/2} \log n ) = \oas((n \phi(h))^{-1/2})$ in $\cH$. 
	
	It remains to consider the finite-dimensional term in \eqref{E:UniformExpIneqInnovationsHilbert1}. We use the same covering as in Lemma~\ref{L:UnifExpIneqInnovations}. This term is bounded above by
	\begin{align}\begin{split}\label{E:UniformExpIneqInnovationsHilbert2}
			 \sup_{y\in U(x,h) }  \sum_{k\le m } \big|\sum_{i=1}^n Z_{e_k,n,i}(y,x)\big|^2 &\le 2 \sum_{k\le m } \max_{1\le u \le \kappa_n } \big|\sum_{i=1}^n Z_{e_k,n,i}(z_{n,u},x)\big|^2 \\
			&\quad +  2 \sum_{k\le m } \max_{1\le u \le \kappa_n } \sup_{y\in U(z_{n,u},\ell_n) } \big|\sum_{i=1}^n Z_{e_k,n,i}(y,z_{n,u})\big|^2 .
			\end{split}
	\end{align}
	Consider the first summand in \eqref{E:UniformExpIneqInnovationsHilbert2}. We obtain as in Lemma~\ref{L:UnifExpIneqInnovations}
	\begin{align}
			&\p\left(	\sum_{k\le m } \max_{1\le u \le \kappa_n } \big|\sum_{i=1}^n Z_{e_k,n,i}(z_{n,u},x)\big|^2  > t^2	\right)
			 \le 2 m \kappa_n \exp\left\{	- \frac{1}{2} \frac{t^2 m^{-1} n \phi(b) }{  A h b^{-1} + B t m^{-1/2} }	\right\} \nonumber \\
			&= \cO\left(	\log n \cdot n^{b/h} \exp\left( \frac{-c \log n}{ A h b^{-1} + B ( n \phi(b))^{-1/2} \log n^{1/2 }  }	\right)\right) \label{E:UniformExpIneqInnovationsHilbert3},
	\end{align}
	where by assumption $\kappa_n$ is $\cO( n^{b/h})$. Consequently, \eqref{E:UniformExpIneqInnovationsHilbert3} is summable over $n\in\N$ if $c$ is sufficiently large and if we use that $b (h \sqrt{n\phi(h)})^{-1}$ is bounded above. In particular, the first summand in \eqref{E:UniformExpIneqInnovationsHilbert2} is $\Oas( (n\phi(b))^{-1/2} \log n) = \oas( (n\phi(h))^{-1/2} )$.
	It remains the second summand in \eqref{E:UniformExpIneqInnovationsHilbert2} which can also be treated as in Lemma~\ref{L:UnifExpIneqInnovations}, compare \eqref{E:UnifExpIneqR2_1} and \eqref{E:UnifExpIneqR2_2} to see that the upper bounds are uniform in $e_k$. This summand is $\Oas( (\ell_n b^{-1} )^2 \log n ) + \Oas(   (n\phi(b))^{-1} (\log n)^2 ) = \oas( (n\phi(h))^{-1} )$. So, $ \sup_{y\in U(x,h) }  \sum_{k\le m}\sum_{i=1}^n Z_{e_k,n,i}(y,x)e_k = \oas(  (n\phi(h))^{-1/2} )$.
	\end{proof}

	\begin{lemma}\label{L:ConvergenceBiasIngredients} Assume \ref{C:Kernel} -- \ref{C:Covering}. Then 
	\begin{align*}
			&(n\phi(h))^{-1} \sum_{i=1}^n (r(X_{n,i})-r(x))\Delta_{n,h,i}(x) - \E{ (r(X_{n,i})-r(x))\Delta_{n,h,i}(x) | \cF_{n,i-1} } = \oas( (n\phi(h))^{-1/2} ).
	\end{align*}
	\end{lemma}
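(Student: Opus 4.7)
My plan is to decompose the Hilbert-space valued sum along the basis $\{e_k\}$, truncate at $m \sim \log n$, and treat the low- and high-frequency parts separately, in the spirit of Corollary~\ref{Cor:ConvergenceR2} and Lemma~\ref{L:UniformExpIneqInnovationsHilbert}. Write $S_n$ for the left-hand side of the claim, $S_{e_k,n} = \scalar{S_n}{e_k}$, and $Z_{e_k,n,i}$ for the corresponding real-valued martingale-difference summand. Then $\|S_n\|^2 = \sum_{k\le m} S_{e_k,n}^2 + \sum_{k>m} S_{e_k,n}^2$. The improvement over Corollary~\ref{Cor:ConvergenceR2} (from $\Oas((n\phi(h))^{-1/2}\log n)$ to $\oas((n\phi(h))^{-1/2})$) must come from the fact that $r_{e_k}(X_{n,i})-r_{e_k}(x)$ is small on the effective kernel support $\{\Delta_{n,h,i}(x)>0\}$: by \ref{C:Regressor}~\ref{C:Regressor2} together with Lemma~\ref{L:LL1} the conditional second moment gains an extra factor of $h$ compared to the generic situation treated in Corollary~\ref{Cor:ConvergenceR2}.

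For the tail $k>m$ I would use Markov's inequality. By \ref{C:Regressor}~\ref{C:Regressor2} and Lemma~\ref{L:LL1},
\[
\E{(r_{e_k}(X_{n,i})-r_{e_k}(x))^2\, \Delta_{n,h,i}(x)^2 \mid \cF_{n,i-1}} = \Oas\bigl(h\,\phi(h)(|\psi'_{2,k,x}(0)|+L_{2,k})\bigr),
\]
so $\E{S_{e_k,n}^2} = \cO(h\,(n\phi(h))^{-1}(|\psi'_{2,k,x}(0)|+L_{2,k}))$. Summing and invoking the exponential decay of $|\psi'_{2,k,x}(0)|+L_{2,k}$ from \ref{C:Regressor}~\ref{C:Regressor2} gives $\p(\sum_{k>m}S_{e_k,n}^2 > \epsilon^2/(n\phi(h))) \le C\epsilon^{-2} h\, a_0 e^{-a_1 m}$. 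Choosing $m=c_1\log n$ with $a_1 c_1 > 1$ makes this summable in $n$, and Borel--Cantelli delivers the $\oas((n\phi(h))^{-1/2})$ bound on the high-frequency piece.

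For the low-frequency part I would apply Lemma~\ref{ExpIneqMDS} to each scalar martingale-difference sequence $(Z_{e_k,n,i})_i$. Uniform boundedness of $\|r\|$ on $U(x,\delta)$ --- an immediate consequence of \ref{C:Response}~\ref{C:Response3} via Jensen's inequality --- yields $|r_{e_k}(X_{n,i})-r_{e_k}(x)|^m \le C^{m-2}(r_{e_k}(X_{n,i})-r_{e_k}(x))^2$, so the full conditional moment sequence is controlled by the second-moment estimate above with Bernstein parameters $a_i^2 \sim h\,\phi(h)^{-1}$ and $b \sim \phi(h)^{-1}$. A union bound over $k\le m$ at the level $t = \epsilon\,(n\phi(h))^{-1/2}/\sqrt m$ then leads to an exponent of the form $\epsilon^2 / (C_1 mh + C_2\sqrt{m/(n\phi(h))})$. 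Since $h = \cO((n\phi(h))^{-1/2})$ by \ref{C:Bandwidth}, both terms in the denominator are $\cO((\log n)(n\phi(h))^{-1/2})$, so the exponent is $\Omega((n\phi(h))^{1/2}/\log n)$, which dominates $\log n$ by the assumption $(n\phi(h))^{1/2}(\log n)^{-(2+\alpha)}\to\infty$ in \ref{C:Bandwidth}. Hence $m\cdot 2\exp(-\text{exponent})$ is summable in $n$ and another Borel--Cantelli argument yields $\oas((n\phi(h))^{-1/2})$; since $\epsilon>0$ is arbitrary, the claim follows.

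The main obstacle I expect is the bookkeeping in the Bernstein step: one has to check carefully that the extra factor of $h$ from \ref{C:Regressor}~\ref{C:Regressor2} genuinely survives the union bound over $k\le c_1\log n$ and the target level $t\sim (n\phi(h))^{-1/2}$, which is precisely where the sharpened bandwidth condition $(n\phi(h))^{1/2}\gg (\log n)^{2+\alpha}$ in \ref{C:Bandwidth} is consumed. Without this additional $h$-gain one would only recover the weaker $\Oas((n\phi(h))^{-1/2}\log n)$ bound of Corollary~\ref{Cor:ConvergenceR2}, which would be insufficient for the bias analysis further downstream.
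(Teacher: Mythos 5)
Your proposal is correct and follows essentially the same route as the paper's proof: the basis decomposition truncated at $m\sim\log n$, a Chebyshev/Markov bound on the tail $k>m$ exploiting the exponential decay of $|\psi'_{2,k,x}(0)|+L_{2,k}$ from \ref{C:Regressor}~\ref{C:Regressor2}, and Lemma~\ref{ExpIneqMDS} with a union bound over $k\le m$ for the low frequencies, with the extra factor $h$ from \ref{C:Regressor}~\ref{C:Regressor2} and the conditions $h(n\phi(h))^{1/2}=\cO(1)$ and $(n\phi(h))^{1/2}(\log n)^{-(2+\alpha)}\to\infty$ doing exactly the work you describe. The only cosmetic difference is that the paper fixes thresholds $t=c(n\phi(b))^{-1/2}\log n$ resp.\ $t=(n\phi(h))^{-1/2}(\log n)^{-\alpha/4}$ and converts to $\oas((n\phi(h))^{-1/2})$ via \ref{C:Bandwidth}, whereas you target $\epsilon(n\phi(h))^{-1/2}$ directly and let $\epsilon\downarrow 0$.
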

	\begin{proof}
	The proof is very similar to the proof of Lemma~\ref{L:UniformExpIneqInnovationsHilbert}. First, set 
	\[
	 Z_{n,i} \coloneqq (n\phi(h))^{-1}  \left\{ (r(X_{n,i})-r(x))\Delta_{n,h,i}(x) - \E{ (r(X_{n,i})-r(x))\Delta_{n,h,i}(x) | \cF_{n,i-1} } \right\}
	\]
	and apply a decomposition as in \eqref{E:UniformExpIneqInnovationsHilbert1}. Set $m= (3+\tilde{\delta}) a_1^{-1} \log n$ and $t=c (n\phi(b))^{-1/2} \log n$, where $c$ is characterized below and $\tilde\delta >0$ sufficiently large. Then
	\begin{align*}
			\p\left(  \sum_{k>m}\big|	\sum_{i=1}^n Z_{e_k,n,i}\big|^2  > t^2 \right) &\le 2 (t^{2}  \phi(h)^2)^{-1}  \sum_{k>m} \E{ (r_{e_k}(X_{n,i})-r_{e_k}(x) )^2 \Delta_{n,h,i}(x)^2 } \\
			&\le \frac{C n \phi(b)}{\phi(h) (\log n)^2 } \left( \sum_{k>m} |\psi'_{2,k,x}(0)| h + L_{2,k} h ^{1+\alpha} \right) \frac{ \E{\Delta_{n,h,i}(x) } }{\phi(h)} \\
			&= \cO\left( \frac{h n \phi(b)}{\phi(h) (\log n)^2} e^{-a_1 m} \right) = \cO\left( \frac{h \phi(b) }{n \phi(h) (\log n)^2} n^{-(1+\tilde{\delta})} \right),
	\end{align*}
	using the definition of $m$ and the assumptions on the coefficients $L_{2,k}$ and the derivatives $\psi'_{2,k,x}$ from \ref{C:Regressor}~\ref{C:Regressor2}. Thus, $\sum_{k>m}\big(	\sum_{i=1}^n Z_{e_k,n,i}\big) e_k = \Oas((n\phi(b))^{-1/2} \log n) = \oas( (n\phi(h))^{-1/2} )$.

	Second, consider the finite dimensional term $\sum_{k\le m} (\sum_{i=1}^n Z_{e_k,n,i} ) \, e_k$. This time, let $t= (n\phi(h))^{-1/2} (\log n)^{-\alpha/4}$. We can derive quite similarly as in \eqref{Eq:LL3_1} and \eqref{Eq:LL3_2} (but this time using \ref{C:Regressor} \ref{C:Regressor2}) that
	\begin{align*}
					\p\left(	\sum_{k \le m} \Big|	\sum_{i=1}^n Z_{e_k,n,i}	\Big|^2  	> t^2 \right) &\le 2 m \exp\left(	- \frac{1}{2} \frac{t^2 m^{-1} n \phi(h) }{Ah + B t m^{-1/2}  }	\right) \\
					&=\cO\left(	\exp\left( - C \frac{(n\phi(h))^{1/2} }{(\log n)^{2+\alpha} } (\log n)^{1+\alpha/2} \right)	\log n \right),
	\end{align*}
	for some $C>0$.	Using that $(n\phi(h))^{1/2} (\log n)^{-(2+\alpha)} \rightarrow \infty$, we find $\sum_{n\in\N} \p\left(	\sum_{k \le m} \big(	\sum_{i=1}^n Z_{e_k,n,i}\big)^2  	> t^2 \right) < \infty$. Thus, $t^{-2} \sum_{k \le m} \big(	\sum_{i=1}^n Z_{e_k,n,i}\big)^2 \rightarrow 0$ $a.c.$ 
	
	This implies that
	$	\sum_{k \le m} \big(	\sum_{i=1}^n Z_{e_k,n,i}\big) e_k$ is $\Oas((n\phi(h))^{-1/2} (\log n)^{-\alpha/4} )$ which is $\oas( (n\phi(h))^{-1/2} )$.
	\end{proof}

\begin{lemma}\label{L:ConvergenceBias0}
	Assume \ref{C:Kernel} -- \ref{C:Covering}. Then
\begin{align}\label{Eq:ConvergenceBias0}
\sup_{y\in U(x,h) }		\norm{ \hat{r}_{n,b}(y) - \hat{r}_{n,b}(x) - (r(y)-r(x)) }  = \oas((n\phi(h))^{-1/2}).
\end{align}
\end{lemma}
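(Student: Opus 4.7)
The plan is to apply the decomposition \eqref{Eq:DefCondBias}--\eqref{Eq:DefCondBias2} at bandwidth $b$ at both $y$ and $x$, which yields
\begin{align*}
	\hat r_{n,b}(y) - \hat r_{n,b}(x) - (r(y) - r(x)) &= [B_{n,b}(y) - B_{n,b}(x)] \\
	&\quad + \Big[\frac{Q_{n,b}(y)+R_{n,b}(y)}{\hat r_{n,b,1}(y)} - \frac{Q_{n,b}(x)+R_{n,b}(x)}{\hat r_{n,b,1}(x)}\Big],
\end{align*}
so it suffices to bound each bracket uniformly over $y\in U(x,h)$ by $\oas((n\phi(h))^{-1/2})$.

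For the bias bracket, I would retrace the proof of Lemma~\ref{L:LL3} to upgrade the stated $\oas(b)$ to the sharper representation $B_{n,b}(z) = (M_0/M_1)\,b\,[\sum_k \psi'_{k,z}(0) e_k]\,(1+\eta_n(z)) + \Oas(b^{1+\alpha})$, where $\eta_n(z)=\oas(1)$ is built only from the ratio-type quantities controlled by Lemma~\ref{L:LL1}. Subtracting at $y$ and $x$, the main piece is bounded via $|\psi'_{k,y}(0)-\psi'_{k,x}(0)|\le L_k h^\alpha$ from \ref{C:Regressor}~\ref{C:Regressor1} together with $\sum_k \psi'_{k,x}(0)^2 + L_k^2<\infty$, and the residual $L(x)[\eta_n(y)-\eta_n(x)]$ is handled by Lemma~\ref{L:LL1}(iv), which yields $|\eta_n(y)-\eta_n(x)|=\Oas(h^\alpha)$. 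The result is $\sup_{y\in U(x,h)} \|B_{n,b}(y)-B_{n,b}(x)\|=\Oas(b h^\alpha)$, and \ref{C:Bandwidth} gives $b^{1+\alpha}(n\phi(h))^{1/2}=o(1)$ and $h\le b$, so $b h^\alpha(n\phi(h))^{1/2}=o(1)$.

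For the stochastic bracket, Lemma~\ref{L:SupConvergenceR1} combined with Lemma~\ref{L:ConvergenceR1} gives $\hat r_{n,b,1}(z)\to 1$ uniformly on $U(x,h)$ and $|\hat r_{n,b,1}(y)-\hat r_{n,b,1}(x)|=\Oas(h/b)$, so the cost of replacing the two denominators by $\hat r_{n,b,1}(x)$ is $|Q_{n,b}(x)+R_{n,b}(x)|\cdot\Oas(h/b)$, which is $\Oas((n\phi(b))^{-1/2}\log n\cdot h/b)=\oas((n\phi(h))^{-1/2})$ thanks to $[\phi(h)/\phi(b)](\log n)^2=o(1)$ and $h/b\to 0$. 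It then remains to bound $Q_{n,b}(y)-Q_{n,b}(x)$ and $R_{n,b}(y)-R_{n,b}(x)$ uniformly. For $Q_{n,b}$, the identity $(Y_{n,i}-r(y))\Delta_{n,b,i}(y)-(Y_{n,i}-r(x))\Delta_{n,b,i}(x) = (Y_{n,i}-r(x))[\Delta_{n,b,i}(y)-\Delta_{n,b,i}(x)] - (r(y)-r(x))\Delta_{n,b,i}(y)$ reduces it to (i) a centered martingale sum with $\Delta$-increment weights, whose $Y_{n,i}$-part is handled by Lemma~\ref{L:UniformExpIneqInnovationsHilbert} and whose $r(x)\in\cH$-constant part is handled by Lemma~\ref{L:UnifExpIneqInnovations} with $\ell=0$, plus (ii) $(r(y)-r(x))[\hat r_{n,b,1}(y)-\bar r_{n,b,1}(y)]$, which is $\cO(1)\cdot\Oas((n\phi(b))^{-1/2}\sqrt{\log n})=\oas((n\phi(h))^{-1/2})$ by continuity of $r$ and Lemma~\ref{L:ConvergenceR1}. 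For $R_{n,b}$, splitting $R_{n,b}(z)=-B_{n,b}(z)[\hat r_{n,b,1}(z)-\bar r_{n,b,1}(z)]$ as $[B_{n,b}(y)-B_{n,b}(x)]\cdot[\hat r_{n,b,1}(y)-\bar r_{n,b,1}(y)] + B_{n,b}(x)\cdot\{[\hat r_{n,b,1}(y)-\bar r_{n,b,1}(y)]-[\hat r_{n,b,1}(x)-\bar r_{n,b,1}(x)]\}$, the first summand is $\Oas(bh^\alpha\cdot (n\phi(b))^{-1/2}\sqrt{\log n})$ and the second is $\Oas(b)\cdot\oas((n\phi(h))^{-1/2})$ by Lemma~\ref{L:UnifExpIneqInnovations} with $\ell=0$, both $\oas((n\phi(h))^{-1/2})$.

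The hard part is the bias analysis: the $\oas(b)$ statement of Lemma~\ref{L:LL3} is too coarse for a pointwise subtraction at two nearby points, so one must re-extract the finer structure where the only $z$-dependence sits either in the linear coefficient $\psi'_{k,z}(0)$ (Hölder-$\alpha$ in $z$ by \ref{C:Regressor}~\ref{C:Regressor1}) or in the slowly-varying ratio $\eta_n(z)$ whose increment is controlled uniformly by Lemma~\ref{L:LL1}(iv). All other pieces are routine uses of the uniform exponential inequalities already established.
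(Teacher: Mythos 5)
Your proposal is correct and follows essentially the same route as the paper: the same splitting of the increment into a deterministic conditional-bias part (controlled via \ref{C:Regressor}~\ref{C:Regressor1} and Lemma~\ref{L:LL1}(iv), with the bandwidth arithmetic $b^{1+\alpha}(n\phi(h))^{1/2}=o(1)$, $h/b\to0$, $[\phi(h)/\phi(b)](\log n)^2=o(1)$) and centered martingale parts handled by Lemmas~\ref{L:ConvergenceR1}, \ref{L:SupConvergenceR1}, \ref{L:UnifExpIneqInnovations} and \ref{L:UniformExpIneqInnovationsHilbert}; the paper merely packages this as a direct manipulation of the raw ratios $\sum_j(Y_{n,j}-r(y))\Delta_{n,b,j}(y)/\sum_j\Delta_{n,b,j}(y)$ rather than subtracting the $B_n+(Q_n+R_n)/\hat r_{n,1}$ decompositions at $y$ and $x$. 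The one loose end your packaging creates is that $Q_{n,b}(y)$, $R_{n,b}(y)$, $\hat r_{n,b,1}(y)$ and $B_{n,b}(y)$ each carry the $y$-dependent normalizer $\E{\Delta_{1,b,1}(y)}$ (these cancel only in the assembled estimator), so your increment bounds — e.g.\ $|\eta_n(y)-\eta_n(x)|=\Oas(h^\alpha)$ and $|\hat r_{n,b,1}(y)-\hat r_{n,b,1}(x)|=\Oas(h/b)$ — additionally require the H\"older continuity of $y\mapsto\E{\Delta_{1,b,1}(y)}/\phi(b)$, which Lemma~\ref{L:LL1}(iv) does not supply; this is easily patched (take expectations in \ref{C:SmallBall}~\ref{C:SmallBall1}, or normalize all four objects at both points by the single constant $\E{\Delta_{1,b,1}(x)}$), and the paper's formulation avoids the issue altogether by never introducing the unconditional normalizer.
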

\begin{proof}
The terms inside the norm in \eqref{Eq:ConvergenceBias0} are equal to
\begin{align}
			 \frac{\sum_{j=1}^n (Y_{n,j}-r(y)) \Delta_{n,b,j}(y) } {\sum_{j=1}^n \Delta_{n,b,j}(y) } - \frac{\sum_{j=1}^n (Y_{n,j}-r(x))\Delta_{n,b,j}(x) } {\sum_{j=1}^n \Delta_{n,b,j}(x)  }. \label{Eq:ConvergenceBias2}
			\end{align}
We can replace each denominator uniformly in $y$ with the corresponding conditional version because
\begin{align*}			
		(n\phi(b))^{-1}  \sum_{j=1}^n \Delta_{n,b,j}(y)  &= \Big\{ (n\phi(b))^{-1}  \sum_{j=1}^n \Delta_{n,b,j}(y) - \Delta_{n,b,j}(x) - \E{\Delta_{n,b,j}(y) - \Delta_{n,b,j}(x) | \cF_{n,j-1} } \Big\} \\
					&\quad + \Big\{ (n\phi(b))^{-1}  \sum_{j=1}^n \Delta_{n,b,j}(x)  - \E{ \Delta_{n,b,j}(x) | \cF_{n,j-1} } \Big\} \\
					&\quad+  (n\phi(b))^{-1}  \sum_{j=1}^n \E{ \Delta_{n,b,j}(y) | \cF_{n,j-1} } \\
					&= (n\phi(b))^{-1}  \sum_{j=1}^n \E{ \Delta_{n,b,j}(y) | \cF_{n,j-1} } + \oas( (n\phi(h))^{-1/2} ),
\end{align*}			
according to Lemmas~\ref{L:ConvergenceR1} and \ref{L:UnifExpIneqInnovations}. Thus, \eqref{Eq:ConvergenceBias2} can be rewritten as (modulo a remainder which is $\oas((n\phi(h))^{-1/2})$)
			\begin{align}
			&\frac{ (n\phi(b))^{-1} \sum_{j=1}^n (Y_{n,j}-r(y)) \Delta_{n,b,j}(y) - (Y_{n,j}-r(x)) \Delta_{n,b,j}(x) } {(n\phi(b))^{-1} \sum_{j=1}^n \E{ \Delta_{n,b,j}(y) |\cF_{n,j-1}} } \label{Eq:ConvergenceBias2a} \\
			&\quad + \frac{(n\phi(b))^{-1} \sum_{j=1}^n (Y_{n,j}-r(x)) \Delta_{n,b,j}(x)}{(n\phi(b))^{-1} \sum_{j=1}^n \E{\Delta_{n,b,j}(x)  |\cF_{n,j-1} } } \frac{ (n\phi(b))^{-1} \sum_{j=1}^n \E{ \Delta_{n,b,j}(x) - \Delta_{n,b,j}(y) |\cF_{n,j-1} }}{(n\phi(b))^{-1} \sum_{j=1}^n \E{ \Delta_{n,b,j}(y) |\cF_{n,j-1} } }. \label{Eq:ConvergenceBias2b}
	\end{align}
	We begin with \eqref{Eq:ConvergenceBias2b}. The first factor is $\Oas(b) + \oas( (n\phi(h))^{-1/2})$, this follows from Corollary~\ref{Cor:ConvergenceR2} and Lemma~\ref{L:LL3}. We show that the second factor in \eqref{Eq:ConvergenceBias2b} is $\oas(b^\alpha)$ for $\alpha>0$ from \ref{C:SmallBall}. We use the standard expansion to obtain
	\begin{align*}
				|\E{ \Delta_{n,b,j}(x) | \cF_{n,j-1} } - \E{ \Delta_{n,b,j}(y) | \cF_{n,j-1} }| &\le K(1) | F_x^{\cF_{n,j-1}} (b) - F_y^{\cF_{n,j-1}} (b) | \\
				&\quad + \int_0^1 |K'(s)| | F_x^{\cF_{n,j-1}} (bs) - F_y^{\cF_{n,j-1}} (bs) | \intd{s}.  
	\end{align*}
Using the assumptions on the H{\"o}lder continuity of the map $(F^{\cF_{n,j-1}}_x(u)) ^{-1} |F^{\cF_{n,j-1}}_y(u) - F^{\cF_{n,j-1}}_x(u)|$ from \ref{C:SmallBall}, we can consequently derive that (uniformly in $y\in U(x,h)$)
\begin{align*}
  (n\phi(b))^{-1} \sum_{j=1}^n | \E{ \Delta_{n,b,j}(x) - \Delta_{n,b,j}(y) | \cF_{n,j-1} }| &\le 2\big(K(1)+\max_{s\in [0,1]} |K'(s)| \big) h^\alpha\, n^{-1} \sum_{i=1}^n \tilde{L}_{n,i}^2 = \Oas(h^\alpha).
\end{align*}
This finishes the computations for \eqref{Eq:ConvergenceBias2b}. It remains to consider the numerator in \eqref{Eq:ConvergenceBias2a}, i.e.,
	\begin{align}
			& \sup_{y\in U(x,h) } \norm{ (n\phi(b))^{-1} \sum_{i=1}^n (Y_{n,i}-r(y))\Delta_{n,b,i}(y) - (Y_{n,i}-r(x))\Delta_{n,b,i}(x) } \nonumber \\
			\begin{split}\label{Eq:ConvergenceBias3}
			&\le  \sup_{y\in U(x,h) } \Bigg|\Bigg| (n\phi(b))^{-1} \sum_{i=1}^n \Big\{ (Y_{n,i}-r(y))\Delta_{n,b,i}(y) - (Y_{n,i}-r(x))\Delta_{n,b,i}(x) \\
			&\qquad\qquad\qquad\qquad - \E{ (Y_{n,i}-r(y))\Delta_{n,b,i}(y) - (Y_{n,i}-r(x))\Delta_{n,b,i}(x)   | \cF_{n,i-1} } \Big\}  \Bigg|\Bigg|
			\end{split} \\
			&\quad +  \sup_{y\in U(x,h) } \norm{ (n\phi(b))^{-1} \sum_{i=1}^n \E{ (Y_{n,i}-r(y))\Delta_{n,b,i}(y) - (Y_{n,i}-r(x))\Delta_{n,b,i}(x)   | \cF_{n,i-1} } }. \label{Eq:ConvergenceBias3b}
	\end{align}
	First, we treat the summand in \eqref{Eq:ConvergenceBias3b}, we need the following result which follows from \ref{C:Response}
	\begin{align}
				&(n\phi(b))^{-1} \sum_{i=1}^n \E{ (r(X_{n,i}) - r(y))\Delta_{n,b,i}(y) | \cF_{n,i-1} } \nonumber \\
				&= \left\{ (n\phi(b))^{-1} \sum_{i=1}^n \E{ \Delta_{n,b,i}(y) \frac{d(y,X_{n,i})}{b} \Big| \cF_{n,i-1} } \right\} \left\{		\sum_{k\in\N} \psi'_{k,y}(0) e_k \right\} b + \Oas( (b+h)^{1+\alpha} ) \label{Eq:ConvergenceBias4}.
	\end{align}
The first factor in the first term in \eqref{Eq:ConvergenceBias4} is continuous in $x$, more precisely, we infer from Lemma~\ref{L:LL1} that
	\begin{align*}
			\sup_{y\in U(x,h)} (n\phi(b))^{-1} \left| \sum_{i=1}^n \E{ \Delta_{n,b,i}(y) \frac{d(y,X_{n,i})}{b} - \Delta_{n,b,i}(x) \frac{d(x,X_{n,i})}{b}  \Big| \cF_{n,i-1} } \right| =  \Oas(h^\alpha).
	\end{align*}
	Moreover,  
	\[
				\sup_{y\in U(x,h)} \norm{ \sum_{k\in\N} \left( \psi'_{k,y}(0) - \psi'_{k,x}(0) \right) e_k } \le \Big( \sum_{k\in\N} L_k^2  \Big)^{1/2} h^{\alpha} = o( b^{\alpha} ),
	\]
	by assumption. Consequently, using \eqref{Eq:ConvergenceBias4} together with the last two estimates, we obtain that \eqref{Eq:ConvergenceBias3b} is $\Oas(b^{1+\alpha})=\oas((n\phi(h))^{-1/2})$. This finishes the calculations on the summand in \eqref{Eq:ConvergenceBias3b}.

	Second, we split the summand in \eqref{Eq:ConvergenceBias3} in the three summands
	\begin{align}
				&\sup_{y\in U(x,h) } (n\phi(b))^{-1} \norm{ \sum_{i=1}^n Y_{n,i} ( \Delta_{n,b,i}(y) - \Delta_{n,b,i}(x)) - \E{Y_{n,i} ( \Delta_{n,b,i}(y) - \Delta_{n,b,i}(x))|\cF_{n,i-1}	} }, 		\label{Eq:ConvergenceBias5} \\
				&\sup_{y\in U(x,h) }  (n\phi(b))^{-1} \norm{ \sum_{i=1}^n [-r(y)] \big\{  \Delta_{n,b,i}(y) - \Delta_{n,b,i}(x) - \E{  \Delta_{n,b,i}(y) - \Delta_{n,b,i}(x)|\cF_{n,i-1}	} \big\}  }, \label{Eq:ConvergenceBias6} \\
				&\sup_{y\in U(x,h) } (n\phi(b))^{-1} \norm{ \sum_{i=1}^n [- (r(y)-r(x))] \big\{ \Delta_{n,b,i}(x) - \E{ \Delta_{n,b,i}(x))|\cF_{n,i-1}	} \big\} }. \label{Eq:ConvergenceBias7}
	\end{align}
	\eqref{Eq:ConvergenceBias5} is $\oas( (n\phi(h))^{-1/2})$, see Lemma~\ref{L:UniformExpIneqInnovationsHilbert}. Using the fact that $r(y)$ is uniformly bounded above in a neighborhood of $x$ and using Lemma~\ref{L:UnifExpIneqInnovations}, \eqref{Eq:ConvergenceBias6} is also $\oas( (n\phi(h))^{-1/2})$. It remains \eqref{Eq:ConvergenceBias7} which is bounded above by
	\[
			\sup_{y\in U(x,h)} \norm{ r(y) -r(x)} \cdot (n\phi(b))^{-1} \left| \sum_{i=1} ^n \Delta_{n,b,i}(x) - \E{\Delta_{n,b,i}(x)|\cF_{n,i-1}}  \right|.
	\]   
	The supremum in the first factor is $\cO(1)$, the second factor is $\Oas( (n\phi(b))^{-1/2} (\log n)^{1/2}) = \oas( (n\phi(h))^{-1/2})$, see Lemma~\ref{L:ConvergenceR1}. This completes the proof.
	\end{proof}

	\begin{lemma}\label{L:ConvergenceBias}
	Assume \ref{C:Kernel} -- \ref{C:Covering}. Then
	$
				B^*_n(x) - B_n(x) = \oas( (n\phi(h)^{-1/2} ) \text{ in } \cH.
	$
	\end{lemma}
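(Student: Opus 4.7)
The plan is to expand both quantities into parallel forms and then apply the uniform approximation of Lemma~\ref{L:ConvergenceBias0} together with the bias/variance estimates already established. Starting from the definition,
\begin{align*}
B_n^*(x) = \frac{\sum_{i=1}^n \hat{r}_{n,b}(X_{n,i})\,\Delta_{n,h,i}(x)}{\sum_{i=1}^n \Delta_{n,h,i}(x)} - \hat{r}_{n,b}(x) = \frac{\sum_{i=1}^n \bigl(\hat{r}_{n,b}(X_{n,i}) - \hat{r}_{n,b}(x)\bigr)\,\Delta_{n,h,i}(x)}{\sum_{i=1}^n \Delta_{n,h,i}(x)}.
\end{align*}
Since $\Delta_{n,h,i}(x)$ vanishes unless $X_{n,i} \in U(x,h)$, Lemma~\ref{L:ConvergenceBias0} permits us to replace $\hat{r}_{n,b}(X_{n,i}) - \hat{r}_{n,b}(x)$ by $r(X_{n,i}) - r(x)$ up to an additive term whose $\cH$-norm is $\oas((n\phi(h))^{-1/2})$ uniformly in such $i$. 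Pulling the uniform remainder outside the sum and dividing by $\sum_i \Delta_{n,h,i}(x)$ (which by Lemma~\ref{L:ConvergenceR1} stays $a.s.$ bounded away from zero in ratio with $n\phi(h)$) gives
\begin{align*}
B_n^*(x) = \frac{A_n}{D_n} + \oas\bigl((n\phi(h))^{-1/2}\bigr), \quad A_n := \frac{1}{n\phi(h)}\sum_{i=1}^n (r(X_{n,i}) - r(x))\Delta_{n,h,i}(x), \quad D_n := \frac{1}{n\phi(h)}\sum_{i=1}^n \Delta_{n,h,i}(x).
\end{align*}

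Let $\tilde{A}_n$ and $\tilde{D}_n$ denote the same averages with the summands replaced by their $\cF_{n,i-1}$-conditional expectations; then, using $\E{Y_{n,i} \mid \cG_{n,i-1}} \equiv r(X_{n,i})$ from \ref{C:Response}~\ref{C:Response1}, one checks that $B_n(x) = \tilde{A}_n/\tilde{D}_n$. Writing
\begin{align*}
\frac{A_n}{D_n} - \frac{\tilde{A}_n}{\tilde{D}_n} = \frac{A_n - \tilde{A}_n}{D_n} + \frac{\tilde{A}_n(\tilde{D}_n - D_n)}{D_n \tilde{D}_n},
\end{align*}
Lemma~\ref{L:ConvergenceR1} and Lemma~\ref{L:LL1}(i),(ii) imply $D_n, \tilde{D}_n \to M_1 f_1(x) > 0$ $a.s.$, so the denominators are harmless. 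Lemma~\ref{L:ConvergenceBiasIngredients} gives $A_n - \tilde{A}_n = \oas((n\phi(h))^{-1/2})$ in $\cH$, so the first summand is $\oas((n\phi(h))^{-1/2})$. For the second summand, Lemma~\ref{L:LL3} yields $\tilde{A}_n = B_n(x)\,\tilde{D}_n = \Oas(h)$ in $\cH$, while Lemma~\ref{L:ConvergenceR1} supplies $D_n - \tilde{D}_n = \Oas((n\phi(h))^{-1/2}(\log n)^{1/2})$; hence the second summand is $\Oas(h(\log n)^{1/2}(n\phi(h))^{-1/2})$, which is $\oas((n\phi(h))^{-1/2})$ by the condition $h(\log n)^{1/2} = o(1)$ from \ref{C:Bandwidth}.

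The delicate part is really encapsulated in Lemma~\ref{L:ConvergenceBias0}, which provides the uniform approximation $\hat{r}_{n,b}(y) - \hat{r}_{n,b}(x) = (r(y) - r(x)) + \oas((n\phi(h))^{-1/2})$ for $y \in U(x,h)$; given this, the remaining work is just the bookkeeping above. The only nontrivial bandwidth balancing point is that the conditional bias $\tilde{A}_n$ is of order $h$, so the $(\log n)^{1/2}(n\phi(h))^{-1/2}$-sized fluctuation of $D_n$ around $\tilde{D}_n$ would spoil the rate without the smallness assumption $h(\log n)^{1/2} \to 0$ in \ref{C:Bandwidth}.
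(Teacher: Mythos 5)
Your proposal is correct and follows essentially the same route as the paper's proof: the same decomposition of $B_n^*(x)$ and $B_n(x)$ into matching quotients, the replacement of $\hat{r}_{n,b}(X_{n,i})-\hat{r}_{n,b}(x)$ by $r(X_{n,i})-r(x)$ via Lemma~\ref{L:ConvergenceBias0}, the martingale fluctuation of the numerator via Lemma~\ref{L:ConvergenceBiasIngredients}, and the denominator exchange costing $\Oas(h(n\phi(h))^{-1/2}(\log n)^{1/2})$. The explicit quotient-difference identity you use is just a slightly more formal bookkeeping of the paper's ``exchange the denominator'' step.
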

	\begin{proof}
	The bias terms are given in \eqref{Eq:CondBiasBoot} and \eqref{Eq:DefCondBias}. We use the decomposition
	\begin{align}
			B_{n}^*(x) &= \frac{ (n\phi(h))^{-1} \sum_{i=1}^n (\hat{r}_{n,b}(X_{n,i})- \hat{r}_{n,b}(x) )\Delta_{n,h,i}(x) }{ (n\phi(h))^{-1} \sum_{i=1}^n \Delta_{n,h,i}(x) }, \label{Eq:ConvergenceBias0a} \\
			B_n(x) &= \frac{ (n\phi(h))^{-1} \sum_{i=1}^n \E{ (r(X_{n,i})- r(x) ) \Delta_{n,h,i}(x) | \cF_{n,i-1} } }{ (n\phi(h))^{-1} \sum_{i=1}^n \E{\Delta_{n,h,i}(x) | \cF_{n,i-1} } }. \label{Eq:ConvergenceBias0b}
	\end{align}
	We infer from Lemma~\ref{L:ConvergenceR1} that 
	$
				(n\phi(h))^{-1} |\sum_{i=1}^n \Delta_{n,h,i}(x) - \mathbb{E}[\Delta_{n,h,i}(x) | \cF_{n,i-1}] | = \Oas( (n \phi(h))^{-1/2} (\log n)^{1/2} )
	$
and $f_1(x)>0$ by assumption. Moreover, the numerator of \eqref{Eq:ConvergenceBias0b} is $\Oas(h)$, see the proof of \ref{L:LL3}. This means that we can exchange the denominator in \eqref{Eq:ConvergenceBias0b} with the denominator from \eqref{Eq:ConvergenceBias0a} at $\Oas(h (n\phi(h))^{-1/2} (\log n)^{1/2} ) =  \oas( (n\phi(h))^{-1/2} )$ costs. Consequently, it suffices to consider the difference
	\begin{align}
				&(n\phi(h))^{-1} \norm{ \sum_{i=1}^n (\hat{r}_{n,b}(X_{n,i})- \hat{r}_{n,b}(x) )\Delta_{n,h,i}(x) - \E{ (r(X_{n,i})- r(x) ) \Delta_{n,h,i}(x) | \cF_{n,i-1} }  } \nonumber \\
				& \le (n\phi(h))^{-1} \norm{ \sum_{i=1}^n \{ \hat{r}_{n,b}(X_{n,i}) - \hat{r}_{n,b}(x) - (r(X_{n,i})-r(x)) \} \Delta_{n,h,i}(x) } \label{Eq:ConvergenceBias1} \\
				&\quad+ (n\phi(h))^{-1} \norm{ \sum_{i=1}^n ( r(X_{n,i})-r(x))\Delta_{n,h,i}(x) - \E{ (r(X_{n,i})- r(x) ) \Delta_{n,h,i}(x) | \cF_{n,i-1} } } . \label{Eq:ConvergenceBias1b} 
	\end{align}
	The term inside the curly parentheses in \eqref{Eq:ConvergenceBias1} is bounded above by
	$
				\sup_{y\in U(x,h)} \norm{ \hat{r}_{n,b}(y) - \hat{r}_{n,b}(x) - (r(y)-r(x)) }
	$
	which is $\oas( (n\phi(h))^{-1/2}$ by \eqref{Eq:ConvergenceBias0}. \eqref{Eq:ConvergenceBias1b} also attains the desired rate, this follows directly from Lemma~\ref{L:ConvergenceBiasIngredients}. 
	\end{proof}

\begin{lemma}\label{L:DistributionBS}
	Let assumptions \ref{C:Kernel} -- \ref{C:Covering} be satisfied. Consider the wild bootstrap procedure. Then
\[
	\cL^*\left( \sqrt{n\phi(h)}  \frac{ \sum_{i=1}^n \left(	Y^*_{n,i} - \Ec{Y^*_{n,i}| \cF^*_{n,i-1} } \right) \Delta_{n,h,i}(x) }{\sum_{i=1}^n \Delta_{n,h,i}(x) } \right) \overset{d}{\rightarrow} \fG(0,\cC_x) \quad a.s.
\]
Moreover, assume that additionally \ref{C:NaiveBS} is satisfied. Then the statement is also true for the naive bootstrap procedure.
\end{lemma}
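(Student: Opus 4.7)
The plan is to mimic the argument of Lemma~\ref{L:NormalityQ} in the bootstrap world. Under both schemes, conditionally on the sample $\cS_n$ the innovations $\beps_{n,i}$ are independent and centered, so setting
\[
\xi^*_{n,i} \coloneqq \sqrt{\phi(h)/n}\,\beps_{n,i}\,\Delta_{n,h,i}(x)/\E{\Delta_{1,h,1}(x)},
\]
the array $(\xi^*_{n,i})_{1\le i\le n}$ is an $\cH$-valued martingale difference array under $\p^*$ with respect to $(\cF^*_{n,i-1})$. Since $\hat{r}_{n,h,1}(x)\to 1$ a.s.\ by Lemma~\ref{L:ConvergenceR1}, it suffices to establish $\sum_i \xi^*_{n,i}\to\fG(0,\cC_x)$ in $\p^*$-distribution, almost surely in the sample. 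I would verify the three parts of the Kundu--Majumdar--Mukherjee criterion (Condition~\ref{C:Lindeberg}) now with respect to $\p^*$.

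For (i), fix $v=\sum_k\gamma_k e_k\in\cH$. Using the conditional independence of $\beps_{n,i}$ from $\cF^*_{n,i-1}$ given $\cS_n$,
\[
\sum_i \Ec{\scalar{\xi^*_{n,i}}{v}^2 \,|\, \cF^*_{n,i-1}} = \frac{\phi(h)}{n\,\E{\Delta_{1,h,1}(x)}^2}\sum_i \Delta_{n,h,i}(x)^2\,\Ec{\scalar{\beps_{n,i}}{v}^2}.
\]
In the naive case, $\Ec{\scalar{\beps_{n,i}}{v}^2}=n^{-1}\sum_j\scalar{\hat\epsilon_{n,j}-\overline{\hat\epsilon}_n}{v}^2$ tends a.s.\ to $\sum_{j,k}\gamma_j\gamma_k W_{2,j,k}$ by \ref{C:NaiveBS}; combining with $(n\phi(h))^{-1}\sum_i\Delta_{n,h,i}(x)^2\to M_2 f_1(x)$ a.s.\ (Lemma~\ref{L:ConvergenceR1} with $\ell=0$, $j=2$) and the homoscedastic identity $W_{2,j,k}=W_{2,j,k}(x)$ gives precisely $\sigma_v^2(x)$ from \eqref{Eq:GaussianLimit2}. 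In the wild case, $\Ec{\scalar{\beps_{n,i}}{v}^2}=\scalar{\hat\epsilon_{n,i}}{v}^2$ (since $\E{V_{n,i}^2}=1$); I would substitute $\hat\epsilon_{v,n,i}$ by $\epsilon_{v,n,i}$ via the algebraic bound $|\hat\epsilon_{v,n,i}^2-\epsilon_{v,n,i}^2|\le\norm{v}\,\norm{\hat{r}_{n,b}(X_{n,i})-r(X_{n,i})}\,(2|\epsilon_{v,n,i}|+\norm{\hat{r}_{n,b}(X_{n,i})-r(X_{n,i})})$. Since $\Delta_{n,h,i}(x)$ vanishes off $U(x,h)$, the error is controlled by $\sup_{y\in U(x,h)}\norm{\hat{r}_{n,b}(y)-r(y)}$, which is $\oas(1)$ after combining Lemma~\ref{L:ConvergenceBias0}, pointwise consistency of $\hat{r}_{n,b}(x)$ and local continuity of $r$. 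A direct application of Lemma~\ref{L:ConvergenceR1} to $\epsilon_{v,n,i}^2\Delta_{n,h,i}(x)^2$, whose $\cG_{n,i-1}$-conditional expectation equals $\sum_{j,k}\gamma_j\gamma_k W_{2,j,k}(X_{n,i})\Delta_{n,h,i}(x)^2$ by \ref{C:Response}~\ref{C:Response2}, together with continuity of $W_{2,j,k}$ at $x$, delivers the same limit $\sigma_v^2(x)$.

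Condition (ii) is handled by summing the same argument over $k$, the high-frequency tail being absorbed by the exponential decay $\sup_{y\in U(x,\delta)}\sum_{k>m}W_{2,k,k}(y)+r_{e_k}(y)^2\le a_0\exp(-a_1 m)$ from \ref{C:Response}~\ref{C:Response2}, exactly in the spirit of Corollary~\ref{Cor:ConvergenceR2}. For (iii), the Markov-type bound
\[
\Ec{\scalar{\xi^*_{n,i}}{e_k}^2\,\1{|\scalar{\xi^*_{n,i}}{e_k}|>\rho}}\le \rho^{-\delta'}\,\Ec{|\scalar{\xi^*_{n,i}}{e_k}|^{2+\delta'}},
\]
combined with $\E{|V_{n,i}|^{2+\delta'}}<\infty$ (wild) or $n^{-1}\sum_i\norm{\hat\epsilon_{n,i}}^{2+\delta'}=\Oas(1)$ from \ref{C:NaiveBS} (naive), together with Lemma~\ref{L:LL1} for the kernel-moment factor, gives a bound of order $\Oas((n\phi(h))^{-\delta'/2})\to 0$. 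The main obstacle is the $\hat\epsilon\leftrightarrow\epsilon$ substitution in the wild-bootstrap verification of (i): because $\cS_n$ is frozen under $\p^*$, all the convergences above must hold a.s.\ along sample paths rather than merely in $\p$-probability at $x$, and this is precisely what Lemma~\ref{L:ConvergenceBias0} is designed to furnish.
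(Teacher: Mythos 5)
Your proposal is correct and follows essentially the same route as the paper: both verify Condition~\ref{C:Lindeberg} for the bootstrap martingale difference array $\xi^*_{n,i}$, handle the wild-bootstrap variance via the $\hat\epsilon\leftrightarrow\epsilon$ substitution controlled by Lemma~\ref{L:ConvergenceBias0} together with pointwise consistency of $\hat r_{n,b}$, absorb the tail in (ii) using the exponential decay from \ref{C:Response}~\ref{C:Response2}, and close (iii) with the same Markov/H\"older bound of order $(n\phi(h))^{-\delta'/2}$ (your exponent $\rho^{-\delta'}$ coincides with the paper's $\rho^{-2a/b}$ for $a=1+\delta'/2$). The naive case via \ref{C:NaiveBS} is likewise handled as in the paper (cf.\ Lemma~\ref{L:NaiveBS}), so no substantive difference remains.
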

\begin{proof}
The quotient can be rewritten as
	\begin{align}\label{E:GaussianLimitBootstrap4}
				 \frac{  (n\phi(h))^{-1/2} \sum_{i=1}^n \left(	Y^*_{n,i} - \Ec{Y^*_{n,i}| \cF^*_{n,i-1} } \right) \Delta_{n,h,i}(x) }{(n\phi(h))^{-1} \sum_{i=1}^n \Delta_{n,h,i}(x) } = \frac{  (n\phi(h))^{-1/2} \sum_{i=1}^n \epsilon^*_{n,i}   \Delta_{n,h,i}(x) }{(n\phi(h))^{-1} \sum_{i=1}^n \Delta_{n,h,i}(x) }.
	\end{align}
	The denominator converges to $M_1 f_1(x)>0$ $a.s.$ Thus, in order to prove the asymptotic distribution, we need to verify the Lindeberg condition~\ref{C:Lindeberg} \ref{C:Lindeberg1} to \ref{C:Lindeberg3} in the bootstrap world for the numerator. Define
	$
			\xi_{n,i}^* \coloneqq (n \phi(h))^{-1/2} \epsilon^*_{n,i} \Delta_{n,h,i}(x)
	$
	for $i=1,\ldots,n$ and $n\in\N$.
	
	We start with the wild bootstrap. Let $v=\sum_{k\in\N} \gamma_k e_k \in\cH$ be arbitrary but fixed. Then
	\begin{align}\label{E:GaussianLimitBootstrap5}
				\sum_{i=1}^n \Ec{\left(\xi_{v,n,i}^*\right)^2 |\cF_{n,i-1}} = \sum_{i=1}^n\frac{ \hat\epsilon_{v,n,i}^2 \Delta_{n,h,i}^2(x)}{n \phi(h) } = \sum_{i=1}^n \frac{(\epsilon_{v,n,i}+r_v(X_{n,i}) - \hat{r}_{v,n,b}(X_{n,i}))^2 \Delta_{n,h,i}^2(x)}{n \phi(h) }.
	\end{align}
	The difference $|r_v(X_{n,i}) - \hat{r}_{v,n,b}(X_{n,i})|$ is bounded above by
	\begin{align}
	\begin{split}\label{E:GaussianLimitBootstrap6}
				\sup_{y\in U(x,h) } |\hat{r}_{v,n,b}(y) - r_v(y)| &\le |\hat{r}_{v,n,b}(x) - r_v(x)| \\
				&\quad+\sup_{y\in U(x,h) } |\hat{r}_{v,n,b}(y) - r_v(y) - [\hat{r}_{v,n,b}(x) - r_v(x)] | .
				\end{split}
	\end{align}
	The first term in \eqref{E:GaussianLimitBootstrap6} is of order $\Oas(b+(n\phi(b))^{-1/2} (\log n)^{1/2} )$ by Lemma~\ref{L:ConvergenceR1} and Lemma~\ref{L:LL3}. Thus, it remains to show that the second term in \eqref{E:GaussianLimitBootstrap6} vanishes also $a.s.$ This follows however from Lemma~\ref{L:ConvergenceBias0}.

	Using this insight, we see that \eqref{E:GaussianLimitBootstrap5} equals
	\begin{align*}
				&(n\phi(h))^{-1}\sum_{i=1}^n (\epsilon_{v,n,i} + \oas(1) )^2 \Delta_{n,h,i}^2(x) = (n\phi(h))^{-1} \sum_{i=1}^n (\epsilon_{v,n,i}  )^2 \Delta_{n,h,i}^2(x)+ \oas(1) \\
				&= \E{\epsilon_{v,n,i}^2|X_{n,i}=x} M_2 f_1 (x) + \oas(1) =\Big(\sum_{j,k\in\N} \gamma_j\gamma_k W_{2,j,k}(x) \Big) M_2 f_1(x) + \oas(1).
	\end{align*}
	This shows that Condition~\ref{C:Lindeberg}~\ref{C:Lindeberg1} is satisfied and in particular that the conditional variance of \eqref{E:GaussianLimitBootstrap4} given the sample $\cS_n$ and in direction $v$ converges to
	\[
		\operatorname{Var}^* \left( \frac{  (n\phi(h))^{-1/2} \sum_{i=1}^n \epsilon^*_{v,n,i}   \Delta_{n,h,i}(x) }{(n\phi(h))^{-1} \sum_{i=1}^n \Delta_{n,h,i}(x) } \right) \rightarrow \frac{ \E{\epsilon_{v,n,i}^2|X_{n,i}=x} M_2}{M_1^2 f_1 (x) } \quad a.s.
		\]		
	Moreover, Condition~\ref{C:Lindeberg}~\ref{C:Lindeberg2} is satisfied. Indeed, use that $\Ec{\left(\xi_{v,n,i}^*\right)^2 } = \Ec{\left(\xi_{v,n,i}^*\right)^2 |\cF^*_{n,i-1}}$ $a.s.$ Then
	\begin{align}
		&\left| \sum_{k\in\N} \sum_{i=1}^n \Ec{\left(\xi_{e_k,n,i}^*\right)^2 |\cF^*_{n,i-1}} - M_2 f_1 (x) \sum_{k\in\N}  W_{2,k,k}(x) \right| \nonumber \\
		\begin{split}\label{E:GaussianLimitBootstrap6b}
		&\le (n\phi(h))^{-1} \left| \sum_{i=1}^n \norm{\epsilon_{n,i}}^2  \Delta_{n,h,i}(x)^2 - \E{  \norm{\epsilon_{n,i}}^2 \Delta_{n,h,i}(x)^2 | \cF_{n,i-1} } \right| \\
		&\quad+ (n\phi(h))^{-1} \left| \sum_{k\in\N} \sum_{i=1}^n  \E{  \epsilon_{e_k,n,i}^2 \Delta_{n,h,i}(x)^2 | \cF_{n,i-1} } - M_2 f_1 (x) \sum_{k\in\N}  W_{2,k,k}(x)\right| + \oas(1).
		\end{split}
	\end{align}
	Using the assumptions on the conditional moments of $\norm{Y_{n,i}}$, we can derive an exponential inequality for the first line in \eqref{E:GaussianLimitBootstrap6b} as in Lemma~\ref{L:ConvergenceR1}. We then find that the this summand is $\Oas( (n\phi(h))^{-1/2} (\log n)^{1/2} )$. Moreover, the second line in \eqref{E:GaussianLimitBootstrap6b} is $\oas(1)$, this follows with arguments similar to those used in \eqref{Eq:NormalityQ2b}. Hence,  Condition~\ref{C:Lindeberg}~\ref{C:Lindeberg2} is satisfied.

	Finally, we show Condition~\ref{C:Lindeberg}~\ref{C:Lindeberg3}. Let $\rho>0$, $a=1+\delta'/2$ and $b$ H{\"o}lder conjugate to $a$, where $\delta'$ is from \ref{C:Response}~\ref{C:Response3}.
	\begin{align}
			\sum_{i=1}^n \Ec{ (\xi^*_{e_k,n,i})^2 \1{|\xi^*_{e_k,n,i}| > \rho } | \cF^*_{n,i-1} } & \le \rho^{-2a/b} \sum_{i=1}^n  \Ec{|\xi^*_{e_k,n,i}|^{2a} | \cF^*_{n,i-1} } \nonumber  \\
			&=  \rho^{-2a/b}  \sum_{i=1}^n  \frac{\Delta^{2+\delta'}_{n,h,i}(x) }{ (n\phi(h))^{1+\delta'/2} } \, \Ec{|\epsilon^*_{e_k,n,i}|^{2+\delta'} | \cF^*_{n,i-1} } \nonumber \\
			&=  (n\phi(h))^{-\delta'/2} \frac{ \E{ |V_{n,i}|^{2+\delta'} } }{\rho^{2a/b} }  \frac{ \sum_{i=1}^n |\hat\epsilon_{e_k,n,i}|^{2+\delta'} \, \Delta^{2+\delta'}_{n,h,i}(x) }{ n\phi(h) }, \label{E:GaussianLimitBootstrap7}
	\end{align}
	where we use that $\epsilon^*_{n,i} = V_{n,i} \hat\epsilon_{n,i}$ and $\E{ |V_{n,i}|^{2+\delta'}}<\infty$. Next, use that 
	\[
		|\hat\epsilon_{e_k,n,i}| \le \norm{Y_{n,i}} + \frac{\sup_{y\in U(x,b)} (n\phi(b))^{-1} \sum_{i=1}^n \norm{Y_{n,i}} \Delta_{n,b,i}(y) }{ \inf_{y\in U(x,b)} (n\phi(b))^{-1} \sum_{i=1}^n \Delta_{n,b,i}(y) } = \norm{Y_{n,i}} + \Oas(1).
	\]
		Next, use \ref{C:Response}~\ref{C:Response3} and proceed similar as in the proof of Lemma~\ref{L:ConvergenceR1} to deduce that the last factor in \eqref{E:GaussianLimitBootstrap7} is $a.s.$ bounded above. Hence, \eqref{E:GaussianLimitBootstrap7} is $\Oas(  (n\phi(h))^{-\delta'/2} )$. This proves the statement for the wild bootstrap.

	It follow the considerations for the naive bootstrap of \eqref{E:GaussianLimitBootstrap4} under the assumptions that the $\hat{\epsilon}_{n,i}$ satisfy \ref{C:NaiveBS}.
	Note that $n^{-1}\sum_{i=1}^n \hat{\epsilon}_{v,n,i}^2 \rightarrow \sum_{j,k \in \N}\gamma_j\gamma_k W_{2,j,k} = \E{\epsilon_{v,n,i}^2}$ $a.s.$ for all $v=\sum_{k\in\N} \gamma_k e_k \in \cH$ using Lemma~\ref{L:NaiveBS}. Condition~\ref{C:Lindeberg}~\ref{C:Lindeberg1} holds also in this case, we have
	\begin{align*}
				\sum_{i=1}^n \Ec{\left(\xi_{v,n,i}^*\right)^2 |\cF_{n,i-1}} = \left( \E{\epsilon_{v,n,i}^2} + \oas(1)		\right) \frac{\sum_{i=1}^n  \Delta_{n,h,i}^2(x)  }{n \phi(h) } \rightarrow \E{\epsilon_{v,n,i}^2} M_2 f_1(x) \quad a.s.
	\end{align*}
	Condition~\ref{C:Lindeberg}~\ref{C:Lindeberg2} is also satisfied. Indeed,
	\begin{align*}
		\left| \sum_{k\in\N} \sum_{i=1}^n \Ec{\left(\xi_{v,n,i}^*\right)^2 |\cF^*_{n,i-1}} - M_2 f_1 (x) \sum_{k\in\N}  W_{2,k,k} \right|  &= M_2 f_1(x) \Big|  \sum_{k\in\N} \Big(n^{-1} \sum_{j=1}^n \hat{\epsilon}_{e_k,n,j}^2 \Big) - W_{2,k,k} \Big| + \oas(1) \\
		&= M_2 f_1(x) \Big| n^{-1} \sum_{j=1}^n \norm{\hat{\epsilon}_{n,j}}^2 - \E{\norm{\epsilon_{n,j} }^2} \Big| + \oas(1).
	\end{align*}
	By assumption, this last equality vanishes $a.s.$ This establishes (ii). Furthermore, Condition~\ref{C:Lindeberg}~\ref{C:Lindeberg3} is true arguing along the same lines as in the derivation of \eqref{E:GaussianLimitBootstrap7} and using that $n^{-1} \sum_{i=1}^n |\hat{\epsilon}_{v,n,i}|^{2+\delta'}$ is $\Oas(1)$.
	\end{proof}

	\begin{proof}[Proof of Theorem~\ref{Thrm:GaussianLimitBootstrap}]
	So far, we have $\cL(\sqrt{n\phi(h)} ( \hat{r}_{n,h}(x) - r(x)	 - B_n(x) ) ) \Rightarrow \fG(0,\cC_x)$ and $\cL^*(\sqrt{n\phi(h)} ( \bootr(x) - \hat{r}_{n,b}(x)	 - B^*_n(x) ) )\Rightarrow \fG(0,\cC_x)$ $a.s.$ Consider the bootstrapped operator and use the following decomposition
	\begin{align}	\begin{split}\label{E:GaussianLimitBootstrap1}
					\sqrt{n\phi(h)} \left( \bootr(x) - \hat{r}_{n,b}(x)		\right) &=  \sqrt{n\phi(h)}  \frac{ \sum_{i=1}^n \left(	Y^*_{n,i} - \Ec{Y^*_{n,i}| \cF^*_{n,i-1} } \right) \Delta_{n,h,i}(x) }{\sum_{i=1}^n \Delta_{n,h,i}(x) } \\
					&\quad + \sqrt{n\phi(h)} B^*_n(x) =: A^*_n + b^*_n.
	\end{split}\end{align}
	The last term in \eqref{E:GaussianLimitBootstrap1} equals $b^*_n = \sqrt{n\phi(h)} B^*_n(x) =  \sqrt{n\phi(h)} B_n(x) + \oas(1)$ from Lemma~\ref{L:ConvergenceBias}. Moreover,
	\begin{align}
			\sqrt{n\phi(h)} ( \bootr(x) - \hat{r}_{n,b}(x)	) &= A_n + b_n 
	\end{align}
	where $A_n = \sqrt{n\phi(h)}(Q_n(x)+R_n(x))/\hat{r}_{n,h,1}(x)$ and $b_n= \sqrt{n\phi(h)} B_n(x)$ from \eqref{Eq:GaussianLimit}. Then there is a deterministic and bounded sequence $(c_n:n\in\N)\subseteq \cH$ such that $\norm{b_n - c_n} = \oas(1) $ and $\norm{b_n-b^*_n} = \oas(1)$.
	
	Let $\Psi\colon\cH\to\R$ be Lipschitz-continuous and bounded. Use the definitions of $\mu_{n,x}$ and $\mu^*_{n,x}$ from \eqref{DefMeasures}. Then
	\begin{align*}
				\left| \int_{\cH} \Psi \intd{\mu^*_{n,x}} -   \int_{\cH} \Psi \intd{\mu_{n,x}}\right| &\le \left| \Ec{ \Psi(A^*_n + b^*_n) } -  \Ec{ \Psi(A^*_n + c_n) } \right| \\
				&\quad + \left| \Ec{ \Psi(A^*_n + c_n) } - \int_\cH \Psi \intd{\fG(c_n,\cC_x)} \right| \\
				&\quad + \left| \int_\cH \Psi \intd{\fG(c_n,\cC_x)} - \E{ \Psi(A_n + c_n) } \right| \\
				&\quad + \left|\E{ \Psi(A_n + c_n) } - \E{\Psi(A_n + b_n)} \right|.
	\end{align*}

The first and the last terms on the right-hand side converge to 0. Denote the Lipschitz constant of $\Psi$ by $L_{\Psi}$. We obtain for the first term $| \Ec{ \Psi(A^*_n + b^*_n) } -  \Ec{ \Psi(A^*_n + c_n) } |\le \Ec{ ( L_\Psi  \norm{ b^*_n - c_n} ) \wedge \norm{\Psi}_\infty }$. This converges to 0 by Lebesgue's dominated convergence theorem if we use the two-stage probabilistic nature of the bootstrap, which means that once the sample data $\cS_n$ are realized, the resampling scheme is performed independently of $\cS_n$.

Consider the second and the third term on the right-hand side, again, we only give the details for the second term. We have that $\cL^*(A_n^*) \Rightarrow \fG(0,\cC_x)$ $a.s.$ by Lemma~\ref{L:DistributionBS}, thus, by Theorem 3.1 in \cite{rao1962relations}
\[
		\sup_{\Xi \in\cG} | \Ec{ \Xi(A_n^*) } - \int_\cH \Xi \intd{ \fG(0,\cC_x)} | \rightarrow 0 \quad a.s., \quad (n\rightarrow\infty), 
\]
where $\cG$ is any class of real-valued, uniformly bounded and equicontinuous functionals on $\cH$. In particular, the class $\cG^* = \{ \Psi(\cdot - c_n): n\in \N\}$ satisfies this assumption. Hence,
\[
			\left| \Ec{ \Psi(A^*_n + c_n) } - \int_\cH \Psi \intd{\fG(c_n,\cC_x)} \right|  \le \sup_{\Xi\in\cG^*} 	\left| \Ec{ \Xi(A^*_n) } - \int_\cH \Xi \intd{\fG(0,\cC_x)} \right| \rightarrow 0 \quad a.s., \quad (n\rightarrow\infty).
\]
\end{proof}

\begin{proof}[Proof of Corollary~\ref{CorBS}]
Clearly, in this case $\mu_{n,x} \Rightarrow \fG(\bar{B}(x),\cC_x )$ and $\mu_{n,x}^* \Rightarrow \fG(\bar{B}(x),\cC_x )$ $a.s.$ by Slutzky's theorem and Theorem~\ref{Thrm:GaussianLimitBootstrap} and Lemma~\ref{L:LL3}. The result concerning the one-dimensional projections is then a consequence of Polya's theorem, see, e.g. \cite{serfling2009approximation}.
\end{proof}

	\appendix
	\section{Appendix}
	
	\begin{lemma}[Details on Example~\ref{Ex:SAR}]\label{Pf:ExSAR}\
	\begin{itemize}\setlength\itemsep{0em}

	\item [\mylabel{ExA}{(a)}]	Let $\theta_{\ell,1-\ell}$ be a linear operator on $\cH$ and assume that $\norm{\theta_{\ell,1-\ell}}_{\cL(\cH)} < 1/2$ for $\ell \in\{0,1\}$. Then the SAR process from \eqref{Eq:SAR1} admits the stationary solution from \eqref{Eq:SAR2} and the series in \eqref{Eq:SAR2} converges in $L^2_\cH(\p)$ and $a.s.$
	\item [\mylabel{ExB}{(b)}] Let $\theta_{\ell,1-\ell}$ be Lipschitz-continuous w.r.t. $\|\cdot\|$ with Lipschitz constant smaller than $1/2$ for $\ell\in\{0,1\}$. Let the $\epsilon_{(i,j)}$ take values in $\cE'\subseteq C^1_{M'}$ for some $M'\in\R_+$. Then the $X_{(i,j)}$ take values in $C^1_M$ for some $M\in\R_+$.	
	\end{itemize}
	\end{lemma}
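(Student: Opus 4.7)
For part~\ref{ExA}, the plan is to mimic the standard construction for causal linear Hilbert-valued processes in the spirit of \cite{bosq_linear_2000}. Set $\rho := \max(\norm{\theta_{1,0}}_{\cL(\cH)}, \norm{\theta_{0,1}}_{\cL(\cH)}) < 1/2$. For each $u$, the inner sum in \eqref{Eq:SAR2} ranges over $2^u$ tuples $k \in \{0,1\}^u$, and each operator product $\prod_\ell \theta_{k_\ell,1-k_\ell}$ has operator norm at most $\rho^u$. Consequently the $u$-th block is bounded in $L^1_\cH(\p)$-norm by $(2\rho)^u\, \E{\norm{\epsilon_{(0,0)}}}$; summing over $u$ and applying monotone convergence yields $\sum_{u,k}\norm{(\prod_\ell \theta_{k_\ell,1-k_\ell})(\epsilon)} < \infty$ $a.s.$, giving absolute $a.s.$\ convergence of the series. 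The same computation with $L^2_\cH(\p)$ in place of $L^1_\cH(\p)$ shows the partial sums are Cauchy in $L^2_\cH(\p)$. Plugging the series into \eqref{Eq:SAR1}, using linearity of the $\theta_{\ell,1-\ell}$, and re-indexing $u\mapsto u+1$ with a fresh first coordinate $k'_1\in\{0,1\}$ reproduces $X_{(i,j)}$; stationarity is immediate from the i.i.d.\ structure of the innovations.

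For part~\ref{ExB}, the plan is first to establish a uniform $a.s.$\ bound on $\norm{X_{(i,j)}}$ via a Picard iteration. Define $X^{(0)}\equiv 0$ and $X^{(n)}_{(i,j)} := \theta_{1,0}(X^{(n-1)}_{(i-1,j)}) + \theta_{0,1}(X^{(n-1)}_{(i,j-1)}) + \epsilon_{(i,j)}$. The Lipschitz hypothesis gives $\norm{\theta_{\ell,1-\ell}(x)} \le \lambda \norm{x} + c_\ell$ with $c_\ell := \norm{\theta_{\ell,1-\ell}(0)}$ and $\lambda<1/2$, and by hypothesis $\norm{\epsilon_{(a,b)}} \le M_\epsilon := M'\lvert\cD\rvert^{1/2}$ $a.s.$\ uniformly in $(a,b)$. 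Induction then gives $\norm{X^{(n)}_{(i,j)}} \le C\sum_{u=0}^{n-1}(2\lambda)^u \le M_0 := C/(1-2\lambda)$ with $C := c_0+c_1+M_\epsilon$. Since the contraction factor is $2\lambda<1$, iterating the inequality $\sup_{(i,j)}\norm{X^{(n+1)}_{(i,j)}-X^{(n)}_{(i,j)}} \le 2\lambda\, \sup_{(i,j)}\norm{X^{(n)}_{(i,j)}-X^{(n-1)}_{(i,j)}}$ shows the iterates are $a.s.$\ Cauchy in $\cH$ uniformly in $(i,j)$; the limit $X_{(i,j)}$ solves \eqref{Eq:SAR1} by Lipschitz continuity of the $\theta_{\ell,1-\ell}$, is stationary, and satisfies $\norm{X_{(i,j)}}\le M_0$ $a.s.$\ uniformly in $(i,j)$.

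Finally, the smoothness condition \eqref{C:Smoothness} says the Lipschitz constant of $\theta_{\ell,1-\ell}(x)$ as a function on $\cD$ is at most $A(1+\norm{x})\le A(1+M_0)$, and $\epsilon_{(i,j)}\in C^1_{M'}$ is itself Lipschitz with constant $\le M'$. The recursion \eqref{Eq:SAR1} then bounds the Lipschitz constant of $X_{(i,j)}$ by $L := 2A(1+M_0)+M'$. To upgrade the $\cH$-norm bound to a sup-norm bound, observe that since $\lvert\cD\rvert^{-1}\int_\cD |X_{(i,j)}(t)|^2\intd{t} \le M_0^2\lvert\cD\rvert^{-1}$, a Chebyshev-type argument produces $t_0\in\cD$ with $|X_{(i,j)}(t_0)|\le M_0\lvert\cD\rvert^{-1/2}$; the Lipschitz bound extends this to $\sup_t|X_{(i,j)}(t)|\le M_0\lvert\cD\rvert^{-1/2} + L\,\mathrm{diam}(\cD)$. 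Summing sup-norm and Lipschitz seminorm yields $\norm{X_{(i,j)}}_1 \le M$ for an explicit $M\in\R_+$, so $X_{(i,j)}\in C^1_M$ $a.s.$ The main obstacle I anticipate is the rigorous justification of the Picard iteration in part~\ref{ExB} — specifically showing the iterates converge $a.s.$\ uniformly in $(i,j)\in\Z^2$ (rather than merely pointwise or in $L^2$) and that the limit coincides with a stationary solution of \eqref{Eq:SAR1}; both amount to careful bookkeeping with the contraction structure once the uniform $L^\infty$ majorants above are in place.
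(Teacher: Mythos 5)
Your proposal is correct and reaches the same conclusions, but by noticeably different mechanics. In part~\ref{ExA} the paper exploits the orthogonality (uncorrelatedness) of the innovations to control the second moment of the series, arriving at $\sigma^2\sum_{u,v}\binom{u+v}{u}^2\norm{\theta_{1,0}}_{\cL(\cH)}^{2u}\norm{\theta_{0,1}}_{\cL(\cH)}^{2v}$, which it then dominates by the geometric series $\sum_t(2\max\{\norm{\theta_{1,0}}_{\cL(\cH)},\norm{\theta_{0,1}}_{\cL(\cH)}\})^{2t}$; you instead apply the triangle inequality blockwise, bounding the $u$-th block by $(2\rho)^u$ in $L^1$ resp.\ $L^2$. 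Since the hypothesis is $\rho<1/2$, your cruder bound suffices and is more elementary (it does not even need the innovations to be centered); the paper's orthogonality computation is the one that would survive under a weaker contraction hypothesis. In part~\ref{ExB} the paper takes the process $X$ as given, iterates the norm inequality $r$ times and lets $r\to\infty$ (tacitly assuming the remainder $L_\theta^r\sum_k\binom{r}{k}\norm{X_{(i-k,j-(r-k))}}$ vanishes, which itself presupposes an a priori bound), whereas you construct the solution by Picard iteration and bound the iterates uniformly --- a cleaner route that sidesteps the a priori boundedness issue, at the cost that strictly speaking you produce \emph{a} stationary solution rather than bounding the given one; to fully close that gap you should add that the contraction also yields uniqueness among a.s.\ $\cH$-bounded solutions, after which your bound transfers to the process of the example. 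The final step --- the Lipschitz bound $L_{X_{(i,j)}}\le 2A(1+\norm{X_{(i,j)}})+M'$ from \eqref{C:Smoothness} and the upgrade from an $L^2$ bound plus a Lipschitz bound to a sup-norm bound --- is identical in spirit in both arguments; you actually spell out the Chebyshev-type selection of the point $t_0$ that the paper leaves implicit in the sentence ``Consequently, $\max_{t\in\cD} |X_{(i,j)}(t)|$ is also bounded uniformly in $(i,j)$.''
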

	\begin{proof}
	We begin with \ref{ExA}. Iterating the definition of $X_{(i,j)}$, one finds for any $p\in\N_+$
		\begin{align*}
				X_{(i,j)} &= \sum_{ k \in \{0,1\}^p } \left(\prod_{\ell=1}^p \theta_{k_\ell,1-k_\ell} \right) ( X_{(i-k.,j-(p-k.))}) \\
				&\quad + \sum_{u=0}^{p-1} \sum_{ k \in \{0,1\}^u } \left(\prod_{\ell=1}^u \theta_{k_\ell,1-k_\ell} \right) (\epsilon_{(i-k.,j-(u-k.))}).
	\end{align*}
Hence, it remains to prove that the limit in \eqref{Eq:SAR2} has the claimed convergence properties. Therefore, observe that
\begin{align*}
		&\E{	\norm{ \sum_{u=0}^\infty \sum_{ k \in \{0,1\}^u } \left(\prod_{\ell=1}^u \theta_{k_\ell,1-k_\ell} \right) (\epsilon_{(i-k.,j-(u-k.))})   }^2	} \\
		&=\E{ \norm{ \sum_{u=0}^\infty \sum_{v=0}^\infty \sum_{\substack{k\in\{0,1\}^{u+v}\\ k.=u} } \left(\prod_{\ell=1}^{u+v} \theta_{k_\ell,1-k_\ell} \right) (\epsilon_{(i-u,j-v)})}^2	  }.
\end{align*}
Set $\sigma^2 = \E{\| \epsilon_{(i,j)} \|^2}$. Using the orthogonality of the $\epsilon_{(i,j)}$, we find as in the proof of Theorem~3.1 in \cite{bosq_linear_2000} that this last line is bounded above by
\begin{align*}
		 \sigma^2 \times \sum_{u=0}^\infty \sum_{v=0}^\infty \binom{u+v}{u}^2 \norm{\theta_{1,0}}_{\cL(\cH)}^{2u} \norm{\theta_{0,1}}_{\cL(\cH)}^{2v} \le  \sigma^2 \times \sum_{t=0}^\infty (2\max\{ \norm{\theta_{1,0}}_{\cL(\cH)}, \norm{\theta_{0,1}}_{\cL(\cH)}\} )^{2t} < \infty.
\end{align*}
Moreover, the series converges $a.s.$ Indeed, 
\begin{align*}
		&\E{	\left( \sum_{u,v=0}^\infty \binom{u+v}{u} \norm{\theta_{1,0}}_{\cL(\cH)}^u \norm{\theta_{1,0}}_{\cL(\cH)}^v \norm{\epsilon_{(i-u,j-v)} }		\right)^2	} \\
		&\le \sigma^2 \times \left( \sum_{t=0}^\infty (2\max\{ \norm{\theta_{1,0}}_{\cL(\cH)}, \norm{\theta_{0,1}}_{\cL(\cH)}\} )^{t} \right)^2 < \infty.
\end{align*}
Thus, $\sum_{u,v=0}^\infty \binom{u+v}{u} \norm{\theta_{1,0}}_{\cL(\cH)}^u \norm{\theta_{1,0}}_{\cL(\cH)}^v \norm{\epsilon_{(i-u,j-v)} }		< \infty$ $a.s.$ In particular, the series on the left-hand-side of \eqref{Eq:SAR2} converges $a.s.$

To prove \ref{ExB}, we make use of the Lipschitz continuity of the operators $\theta_{\ell,1-\ell}$. Denote the Lipschitz constant for both operators by $L_\theta < 1/2$. Then we find
\[
				\norm{ X_{(i,j)} } \le L_\theta \norm{ X_{(i-1,j)}} + \norm{ \theta_{1,0}(0) } + L_\theta \norm{ X_{(i,j-1)}} + \norm{ \theta_{0,1}(0) } +  \norm{ \epsilon_{(i,j )} }.
\]
If we iterate this inequality, we obtain similar as above that
\[
			\norm{ X_{(i,j)} } \le L_\theta^r \sum_{k=0}^r \binom{r}{k} \norm{ X_{(i-k,j-(r-k) )}} + \sum_{u=0}^{r-1} L_\theta^u \sum_{k=0}^u \binom{u}{k} \left( \norm{ \epsilon_{(i-k,j-(u-k) )} } + \norm{ \theta_{1,0}(0) }  + \norm{ \theta_{0,1}(0) } \right)
\]
for all $r\in\N$. Letting $r\rightarrow \infty$, this implies 
\[
	\norm{ X_{(i,j)} } \le (1-2L_\theta)^{-1} (M' |\cD|^{1/2} + \norm{ \theta_{1,0}(0) }  + \norm{ \theta_{0,1}(0) } ),
	\]
because the innovations are uniformly bounded above in the $\norm{\cdot}$-norm by $M' |\cD|^{1/2}<\infty$.
Write $L_{X_{(i,j)}}$ for the Lipschitz constant of $X_{(i,j)}$. The smoothness requirement from \eqref{C:Smoothness} on the operators $\theta_{\ell,1-\ell}$ allows us to deduce
\[
			L_{X_{(i,j)}} \le 2 A (1+\| X_{(i,j)} \| ) + M'
\]
for all $(i,j)$. Consequently, $\max_{t\in\cD} |X_{(i,j)}(t)|$ is also bounded uniformly in $(i,j)$. Hence, there is some $M>0$ such that the $X_{(i,j)}$ take values in $C^1_M$ for all $(i,j)$. This yields the claim.
	\end{proof}

	\begin{lemma}\label{L:NaiveBS}
	Assume \ref{C:NaiveBS} and let $v=\sum_{k\in\N} \gamma_k e_k \in \cH$. Then $n^{-1}\sum_{i=1}^n \hat{\epsilon}_{v,n,i}^2 \rightarrow\sum_{j,k \in \N}\gamma_j\gamma_k W_{2,j,k}$ $a.s.$
	\end{lemma}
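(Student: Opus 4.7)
The plan is to expand $\hat{\epsilon}_{v,n,i}^2$ into the double series $\sum_{j,k}\gamma_j\gamma_k \hat{\epsilon}_{e_j,n,i}\hat{\epsilon}_{e_k,n,i}$, split the summation at a truncation level $m$, pass the limit $n\to\infty$ on the finite-dimensional head via the coordinate-wise part of \ref{C:NaiveBS}, and finally let $m\to\infty$, controlling the tail uniformly in $n$ via Cauchy--Schwarz and the convergence $n^{-1}\sum_i\|\hat\epsilon_{n,i}\|^2\to\E{\|\epsilon_{1,1}\|^2}$.

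Concretely, I would introduce the orthogonal projection $P_m w = \sum_{k\le m}\scalar{w}{e_k}e_k$ and its complement $Q_m = I - P_m$. Since $\hat{\epsilon}_{v,n,i} = \scalar{P_m\hat\epsilon_{n,i}}{P_m v} + \scalar{Q_m\hat\epsilon_{n,i}}{Q_m v}$, squaring and averaging gives a decomposition
\[
n^{-1}\sum_{i=1}^n \hat{\epsilon}_{v,n,i}^2 = S^{(1)}_{n,m} + 2 S^{(2)}_{n,m} + S^{(3)}_{n,m},
\]
where $S^{(1)}_{n,m} = \sum_{j,k\le m}\gamma_j\gamma_k\, n^{-1}\sum_{i=1}^n \hat\epsilon_{e_j,n,i}\hat\epsilon_{e_k,n,i}$, the cross term $S^{(2)}_{n,m}$ is a product of the two inner products averaged over $i$, and $S^{(3)}_{n,m} = n^{-1}\sum_{i=1}^n \scalar{Q_m\hat\epsilon_{n,i}}{Q_m v}^2$. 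By the coordinate-wise convergence in \ref{C:NaiveBS}, $S^{(1)}_{n,m}\to\sum_{j,k\le m}\gamma_j\gamma_k W_{2,j,k}$ almost surely as $n\to\infty$, for each fixed $m$.

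For the remainders, two applications of the Cauchy--Schwarz inequality give
\[
S^{(3)}_{n,m} \le \|Q_m v\|^2\cdot n^{-1}\sum_{i=1}^n \|\hat\epsilon_{n,i}\|^2,\qquad |S^{(2)}_{n,m}|\le \|P_m v\|\,\|Q_m v\|\cdot n^{-1}\sum_{i=1}^n \|\hat\epsilon_{n,i}\|^2,
\]
so by the second statement in \ref{C:NaiveBS} both are almost surely bounded by $\bigl(\|Q_m v\|^2+\|v\|\,\|Q_m v\|\bigr)\bigl(\E{\|\epsilon_{1,1}\|^2}+o(1)\bigr)$, which tends to zero as $m\to\infty$ uniformly in $n$ because $\|Q_m v\|^2=\sum_{k>m}\gamma_k^2\to 0$. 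Finally, applying $|W_{2,j,k}|\le\sqrt{W_{2,j,j}W_{2,k,k}}$ together with $\sum_k W_{2,k,k}=\E{\|\epsilon_{1,1}\|^2}<\infty$ ensures that the double series $\sum_{j,k}\gamma_j\gamma_k W_{2,j,k}$ converges absolutely and that $\sum_{j,k\le m}\gamma_j\gamma_k W_{2,j,k}$ converges to this limit as $m\to\infty$.

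A standard $\limsup$--$\liminf$ argument on the null set outside which all four a.s.\ statements hold then yields $n^{-1}\sum_{i=1}^n\hat\epsilon_{v,n,i}^2\to\sum_{j,k}\gamma_j\gamma_k W_{2,j,k}$ a.s. The only real obstacle is the exchange of the limits in $n$ and $m$; this is exactly what the uniform tail bound via Cauchy--Schwarz and the norm-square convergence in \ref{C:NaiveBS} are designed to provide. The third and fourth moment bounds in \ref{C:NaiveBS} are not needed for this particular lemma.
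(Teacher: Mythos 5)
Your proof is correct and follows essentially the same route as the paper: truncate the double series $\sum_{j,k}\gamma_j\gamma_k\,n^{-1}\sum_i\hat\epsilon_{e_j,n,i}\hat\epsilon_{e_k,n,i}$ at a level $m$, apply the coordinate-wise convergence in \ref{C:NaiveBS} to the finite head, and control the tail uniformly in $n$ via Cauchy--Schwarz together with $W_{2,j,k}\le W_{2,j,j}^{1/2}W_{2,k,k}^{1/2}$ and the a.s.\ boundedness of $n^{-1}\sum_{i=1}^n\norm{\hat\epsilon_{n,i}}^2$. The only cosmetic difference is that the paper bounds this last quantity through the $(2+\delta')$-moment condition (via a power-mean inequality) and chooses $m$ as a random a.s.\ finite index depending on a prescribed $\rho$, whereas you use the norm-square convergence directly and a $\limsup$--$\liminf$ argument in $n$ then $m$; you are right that the higher-moment bound is not actually needed for this lemma.
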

	\begin{proof}
	Let $\rho>0$ be arbitrary but fixed. Set
	\[
				\tilde\rho \coloneqq \frac{\rho }{8 \norm{v}}\left \{	\max\left[ \limsup_{n\rightarrow\infty} \left(n^{-1} \sum_{i=1}^n \norm{\hat{\epsilon}_{n,i} }^{2+\delta'} \right)^{2/(2+\delta')}, 	\sum_{k\in\N} W_{2,k,k}	\right]\right\}^{-1} > 0 \quad  a.s.
	\]
	Moreover, set $m = \inf\{ u\in\N: \sum_{k: k>u} \gamma_k^2 \le \tilde\rho^2 \}$, then $m<\infty$ $a.s.$ Set $I_m = \{ (j,k): j>m \text{ or } k>m \}$ which is $a.s.$ non-empty. Consider
	\begin{align}
					&\left| \sum_{j,k \in \N} n^{-1} \sum_{i=1}^n \gamma_j \gamma_k \hat\epsilon_{e_j,n,i} \hat\epsilon_{e_k,n,i} - \sum_{j,k\in\N}  \gamma_j \gamma_k  W_{2,j,k} \right | 	\nonumber			\\
					&\le \left| \sum_{j,k \le m} n^{-1} \sum_{i=1}^n \gamma_j \gamma_k \hat\epsilon_{e_j,n,i} \hat\epsilon_{e_k,n,i} - \sum_{j,k\le m} \gamma_j \gamma_k W_{2,j,k} \right | \label{Eq:NaiveBS1} \\
					&\quad +  \sum_{(j,k) \in I_m } n^{-1} \sum_{i=1}^n \left|\gamma_j \gamma_k \hat\epsilon_{e_j,n,i} \hat\epsilon_{e_k,n,i} \right | +  \sum_{(j,k)\in I_m } \left|\gamma_j \gamma_k W_{2,j,k}  \right |.  \label{Eq:NaiveBS2} 
	\end{align}	
	Using the relation $W_{2,j,k} \le W_{2,j,j}^{1/2} W_{2,k,k}^{1/2}$, an $L^p$-inequality and the Cauchy-Schwarz-inequality, one obtains that both summands in the last line \eqref{Eq:NaiveBS2} are bounded above by
	\begin{align*}
				\sum_{(j,k)\in I_m } \left|\gamma_j \gamma_k W_{2,j,k}  \right | &\le 2 \tilde\rho \norm{v} \sum_{k\in\N} W_{2,k,k} \le \frac{\rho}{4} \quad a.s.,\\
				\sum_{(j,k) \in I_m } n^{-1} \sum_{i=1}^n \left|\gamma_j \gamma_k \hat\epsilon_{e_j,n,i} \hat\epsilon_{e_k,n,i} \right | &\le 2\tilde\rho  \norm{v} n^{-1} \sum_{i=1}^n \norm{\hat\epsilon_{n,i}}^2 \le \frac{\rho}{4} \quad a.s.
	\end{align*}
	Furthermore, the term in \eqref{Eq:NaiveBS1} converges to 0 $a.s.$, using the fact that $n^{-1} \sum_{i=1}^n \hat\epsilon_{e_j,n,i} \hat\epsilon_{e_k,n,i} $ converges to $W_{2,j,k}$ $a.s.$ for each pair $j,k$ and that $m< \infty$ $a.s.$
	\end{proof}

%\bibliographystyle{abbrvnat}
%\bibliography{_Bibliography}

\begin{thebibliography}{31}
\providecommand{\natexlab}[1]{#1}
\providecommand{\url}[1]{\texttt{#1}}
\expandafter\ifx\csname urlstyle\endcsname\relax
  \providecommand{\doi}[1]{doi: #1}\else
  \providecommand{\doi}{doi: \begingroup \urlstyle{rm}\Url}\fi

\bibitem[Andrews(1984)]{andrews1984non}
D.~W. Andrews.
\newblock Non-strong mixing autoregressive processes.
\newblock \emph{Journal of Applied Probability}, 21\penalty0 (4):\penalty0
  930--934, 1984.

\bibitem[Basu and Reinsel(1993)]{basu1993properties}
S.~Basu and G.~C. Reinsel.
\newblock Properties of the spatial unilateral first-order arma model.
\newblock \emph{Advances in applied Probability}, 25\penalty0 (3):\penalty0
  631--648, 1993.

\bibitem[Bosq(2000)]{bosq_linear_2000}
D.~Bosq.
\newblock \emph{Linear processes in function spaces: theory and applications},
  volume 149.
\newblock Springer Science \& Business Media, 2000.

\bibitem[Bustos et~al.(2009)Bustos, Ojeda, and Vallejos]{bustos2009spatial}
O.~Bustos, S.~Ojeda, and R.~Vallejos.
\newblock Spatial arma models and its applications to image filtering.
\newblock \emph{Brazilian Journal of Probability and Statistics}, pages
  141--165, 2009.

\bibitem[de~la Pe{\~n}a and Gin{\'e}(1999)]{de1999decoupling}
V.~H. de~la Pe{\~n}a and E.~Gin{\'e}.
\newblock \emph{Decoupling. From dependence to independence. Randomly stopped
  processes. U-statistics and processes. Martingales and beyond. Probability
  and its Applications}.
\newblock Springer-Verlag, New York, 1999.

\bibitem[Delsol(2009)]{delsol_advances_2009}
L.~Delsol.
\newblock Advances on asymptotic normality in non-parametric functional time
  series analysis.
\newblock \emph{Statistics}, 43\penalty0 (1):\penalty0 13--33, 2009.

\bibitem[Ferraty and Vieu(2000)]{ferraty2000dimension}
F.~Ferraty and P.~Vieu.
\newblock Dimension fractale et estimation de la r{\'e}gression dans des
  espaces vectoriels semi-norm{\'e}s.
\newblock \emph{Comptes Rendus de l'Acad{\'e}mie des Sciences-Series
  I-Mathematics}, 330\penalty0 (2):\penalty0 139--142, 2000.

\bibitem[Ferraty and Vieu(2004)]{ferraty2004nonparametric}
F.~Ferraty and P.~Vieu.
\newblock Nonparametric models for functional data, with application in
  regression, time series prediction and curve discrimination.
\newblock \emph{Nonparametric Statistics}, 16\penalty0 (1-2):\penalty0
  111--125, 2004.

\bibitem[Ferraty and Vieu(2006)]{ferraty2006nonparametric}
F.~Ferraty and P.~Vieu.
\newblock \emph{Nonparametric functional data analysis: theory and practice}.
\newblock Springer Science \& Business Media, 2006.

\bibitem[Ferraty et~al.(2007)Ferraty, Mas, and
  Vieu]{ferraty_nonparametric_2007}
F.~Ferraty, A.~Mas, and P.~Vieu.
\newblock Nonparametric regression on functional data: inference and practical
  aspects.
\newblock \emph{Australian \& New Zealand Journal of Statistics}, 49\penalty0
  (3):\penalty0 267--286, 2007.

\bibitem[Ferraty et~al.(2010)Ferraty, Keilegom, and
  Vieu]{ferraty_validity_2010}
F.~Ferraty, I.~v. Keilegom, and P.~Vieu.
\newblock On the validity of the bootstrap in non-parametric functional
  regression.
\newblock \emph{Scandinavian Journal of Statistics}, 37\penalty0 (2):\penalty0
  286--306, 2010.

\bibitem[Ferraty et~al.(2012)Ferraty, Keilegom, and
  Vieu]{ferraty_regression_2012}
F.~Ferraty, I.~v. Keilegom, and P.~Vieu.
\newblock Regression when both response and predictor are functions.
\newblock \emph{Journal of Multivariate Analysis}, 109:\penalty0 10--28, 2012.

\bibitem[Gonz{\'a}lez-Manteiga and
  Mart{\'\i}nez-Calvo(2011)]{gonzalez2011bootstrap}
W.~Gonz{\'a}lez-Manteiga and A.~Mart{\'\i}nez-Calvo.
\newblock Bootstrap in functional linear regression.
\newblock \emph{Journal of Statistical Planning and Inference}, 141\penalty0
  (1):\penalty0 453--461, 2011.

\bibitem[Hannan(2009)]{hannan2009multiple}
E.~Hannan.
\newblock \emph{Multiple Time Series}.
\newblock Wiley Series in Probability and Statistics. Wiley, 2009.

\bibitem[Kundu et~al.(2000)Kundu, Majumdar, and Mukherjee]{kundu2000central}
S.~Kundu, S.~Majumdar, and K.~Mukherjee.
\newblock Central limit theorems revisited.
\newblock \emph{Statistics \& Probability Letters}, 47\penalty0 (3):\penalty0
  265--275, 2000.

\bibitem[Laib and Louani(2010)]{laib2010nonparametric}
N.~Laib and D.~Louani.
\newblock Nonparametric kernel regression estimation for functional stationary
  ergodic data: asymptotic properties.
\newblock \emph{Journal of Multivariate Analysis}, 101\penalty0 (10):\penalty0
  2266--2281, 2010.

\bibitem[Laib and Louani(2011)]{laib2011rates}
N.~Laib and D.~Louani.
\newblock Rates of strong consistencies of the regression function estimator
  for functional stationary ergodic data.
\newblock \emph{Journal of Statistical Planning and Inference}, 141\penalty0
  (1):\penalty0 359--372, 2011.

\bibitem[Li and Shao(2001)]{li2001gaussian}
W.~V. Li and Q.-M. Shao.
\newblock Gaussian processes: inequalities, small ball probabilities and
  applications.
\newblock \emph{Handbook of Statistics}, 19:\penalty0 533--597, 2001.

%\bibitem[Lipcer and {\v{S}}irjaev(1972)]{lipcer1972absolute}
\bibitem[Lipster and Shiryayev(1972)]{lipcer1972absolute}
R.~S. Lipster and A.~N. Shiryayev.
\newblock On the absolute continuity of measures corresponding to processes of
  diffusion type relative to a Wiener measure.
\newblock \emph{Izvestiya: Mathematics}, 6\penalty0 (4):\penalty0 839--882,
  1972.

\bibitem[Mammen(1993)]{mammen1993bootstrap}
E.~Mammen.
\newblock Bootstrap and wild bootstrap for high dimensional linear models.
\newblock \emph{The Annals of Statistics}, 21\penalty0 (4):\penalty0 255--285,
  1993.

\bibitem[Masry(2005)]{masry2005nonparametric}
E.~Masry.
\newblock Nonparametric regression estimation for dependent functional data:
  asymptotic normality.
\newblock \emph{Stochastic Processes and their Applications}, 115\penalty0
  (1):\penalty0 155--177, 2005.

\bibitem[Ramsay and Silverman(2002)]{ramsay_applied_2002}
J.~O. Ramsay and B.~W. Silverman.
\newblock \emph{Applied functional data analysis: methods and case studies},
  volume~77.
\newblock Springer, 2002.

\bibitem[Ramsay and Silverman(2005)]{ramsay_functional_2005}
J.~O. Ramsay and B.~W. Silverman.
\newblock \emph{Functional Data Analysis}.
\newblock Springer Series in Statistics. Springer-Verlag, 2005.

\bibitem[Ra{\~n}a et~al.(2016)Ra{\~n}a, Aneiros, Vilar, and
  Vieu]{rana_bootstrap_2016}
P.~Ra{\~n}a, G.~Aneiros, J.~Vilar, and P.~Vieu.
\newblock Bootstrap confidence intervals in functional nonparametric regression
  under dependence.
\newblock \emph{Electronic Journal of Statistics}, 10\penalty0 (2):\penalty0
  1973--1999, 2016.

\bibitem[Rao(1962)]{rao1962relations}
R.~R. Rao.
\newblock Relations between weak and uniform convergence of measures with
  applications.
\newblock \emph{The Annals of Mathematical Statistics}, 33\penalty0
  (2):\penalty0 659--680, 1962.

\bibitem[Serfling(2009)]{serfling2009approximation}
R.~J. Serfling.
\newblock \emph{Approximation theorems of mathematical statistics}, volume 162.
\newblock John Wiley \& Sons, 2009.

\bibitem[Tempelman(2010)]{tempelman2010ergodic}
A.~Tempelman.
\newblock \emph{Ergodic Theorems for Group Actions: Informational and
  Thermodynamical Aspects}.
\newblock Mathematics and Its Applications. Springer Netherlands, 2010.

\bibitem[van~der Vaart and Wellner(2013)]{van2013weak}
A.~van~der Vaart and J.~Wellner.
\newblock \emph{Weak Convergence and Empirical Processes: With Applications to
  Statistics}.
\newblock Springer Series in Statistics. Springer New York, 2013.

\bibitem[Wu(1986)]{wu1986jackknife}
C.~Wu.
\newblock Jackknife, bootstrap and other resampling methods in regression
  analysis.
\newblock \emph{The Annals of Statistics}, 14\penalty0 (4):\penalty0
  1261--1295, 1986.

\bibitem[Yurinski{\u\i}(1976)]{yurinskiui1976exponential}
V.~Yurinski{\u\i}.
\newblock Exponential inequalities for sums of random vectors.
\newblock \emph{Journal of Multivariate Analysis}, 6\penalty0 (4):\penalty0
  473--499, 1976.

\bibitem[Zhu and Politis(2017)]{politis_kernel_2016}
T.~Zhu and D.~N. Politis.
\newblock Kernel estimates of nonparametric functional autoregression models
  and their bootstrap approximation.
\newblock \emph{Electronic Journal of Statistics}, 11\penalty0 (2):\penalty0
  2876--2906, 2017.

\end{thebibliography}

\end{document}